\let\reftagform@=\tagform@
\def\tagform@#1{\maketag@@@{(\ignorespaces\textcolor{black}{#1}\unskip\@@italiccorr)}}
\newcommand{\iref}[1]{\textup{\reftagform@{\tcr{\ref{#1}}}}}
\begin{document}

\title{Partial Smoothness, Subdifferentials and Set-valued Operator
}

\author{Ziqi Qin \and
        Jingwei Liang %etc.
}

%\authorrunning{Short form of author list} % if too long for running head

\institute{Ziqi Qin \at
              School of Mathematical Sciences \& Institute of Natural Sciences, Shanghai Jiao Tong University, 200240 Shanghai China.\\
              \email{zqqin58@sjtu.edu.cn}           %  \\
%             \emph{Present address:} of F. Author  %  if needed
           \and
           Jingwei Liang \at
           School of Mathematical Sciences \& Institute of Natural Sciences, Shanghai Jiao Tong University, 200240 Shanghai China.\\
              \email{jingwei.liang@sjtu.edu.cn}   
}

\date{Received: date / Accepted: date}
% The correct dates will be entered by the editor

\maketitle

\begin{abstract}
Over the past decades, the concept ``partial smoothness'' has been serving as a powerful  tool in several fields involving nonsmooth analysis, such as nonsmooth optimization, inverse problems and operation research, etc. 
The essence of partial smoothness is that it builds an elegant connection between the optimization variable and the objective function value through the subdifferential. 
Identifiability is the most appealing property of partial smoothness, as locally it allows us to conduct much finer or even sharp analysis, such as linear convergence or sensitivity analysis. 
However, currently the identifiability relies on non-degeneracy condition and exact dual convergence, which limits the potential application of partial smoothness. 
In this paper, we provide an alternative characterization of partial smoothness through only subdifferentials. This new perspective enables us to establish stronger identification results, explain identification under degeneracy and non-vanishing error. 
Moreover, we can generalize this new characterization to set-valued operators, and provide a complement definition of partly smooth operator proposed in \cite{lewis2022partial}. 
\keywords{Nonsmooth optimization \and Partial Smoothness \and Subdifferential \and Set-valued Operators \and Monotone Operators}
% \PACS{PACS code1 \and PACS code2 \and more}
\subclass{47H05 \and 90C31 \and 49M05 \and 65K10}
\end{abstract}

\section{Introduction}\label{sec:introduction}

Nonsmoothness is at the heart of modern optimization, the advances in nonsmooth analysis have greatly impacted the development of (first-order) optimization algorithms, and spread to many other fields including operation research, inverse problems, signal/image processing, data science and statistics, to name a few. 
Along the study of nonsmoothness, an important topic is exploring the underlying smooth structure, especially around the optimal solutions. Such results not only yield high-level geometrical understanding of the optimization problem, but also stimulate the development of efficient numerical schemes.  
This direction of research dates back to linear programming where complementary slackness captures the smoothness inside constraints which is the principle of active set methods. Wright extends this idea to smooth identifiable surfaces \cite{Wright-IdentSurf}, which is further extended to smooth Riemannian manifold by Lewis and Hare \cite{hare2004identifying} and encoded as ``partial smoothness''. Another similar branch of work is called ``$\calU\calV$ decomposition'' \cite{lemarechal2000U}, see also \cite{mifflin2004VU}.

An important property of the smooth structure is called ``identifiability''. This means that given an optimization problem $\min_{x} f(x)$ whose optimal solution(s) are contained in a smooth Riemannian manifold $\calM$, a sequence, \eg $\seq{\xk}$, is generated to find an optimal solution $\xsol \in \calM$, then for all $k$ large enough, there holds
\beq\label{eq:identification}
\xk \in \calM .
\eeq
% \todo{$\xsol\rightarrow\xbar$, $v\rightarrow u$?}
Under partial smoothness \cite{LewisPartlySmooth}, identification \eqref{eq:identification} can be guaranteed if $\xsol$ is non-degenerate (\ie $\ubar \in \ri\Pa{\partial f(\xsol)}$ where $\ri\Pa{\cdot}$ denotes relative interior), and there exists dual sequence $\seq{\uk}$ associated to $\seq{\xk}$ satisfying
\beq\label{eq:uk-to-ubar}
\partial f(\xk) \ni \uk \to \ubar  .
\eeq
Since first proposed in \cite{LewisPartlySmooth}, the theoretical framework of partial smoothness is established over a series of work \cite{hare2004identifying,LewisPartlyTiltHessian,drusvyatskiy2014optimality}, and finds successful applications in several fields including sensitivity analysis \cite{mordukhovich1992sensitivity}, local linear convergence analysis of first-order optimization algorithms \cite{liang2014local,liang2017activity} and design efficient numerical schemes \cite{poon2019trajectory}, etc.

Though very powerful, partial smoothness has several limitations, such as requiring non-degeneracy condition for identification, dual vector convergence and restricted to functions. 
Attempts are made in the literature to overcome them, for example in \cite{fadili2018sensitivity}, for functions with stratifiable structure, the authors study the identification under degeneracy conditions. In \cite{drusvyatskiy2014optimality}, partial smoothness in terms of normal cone operator is studied. Later on in \cite{lewis2022partial}, partly smooth operator is proposed together with a so-called ``constant rank'' property which characterizes the local property of the operator's graph. 
Unfortunately, these work fail to provide satisfactory solutions to the limitations of partial smoothness.

\paragraph{Contribution}
Motivated by the limitations of partial smoothness, in this paper we propose a novel alternative definition (see Definition \ref{dfn:ps-svo}) of partial smoothness from functions to set-valued operators. Under this new definition, we are able to provide a finer local geometric characterization of non-degeneracy condition. From this, we have the following contributions
\begin{enumerate}[label={\rm (\roman{*})}]
    \item In Definition \ref{dfn:ps-svo}, we propose a new definition of partial smoothness of set-valued operator, which is a complement of the constant-rank property of \cite{lewis2022partial}, calculus rules are also provided. 
    
    \item We provide a new geometric characterization for partial smoothness. Take subdifferential $\partial f$ of partly smooth function $f$ for example,  in Proposition \ref{prop:local-union}, we show that the linear span of the local union $\calU = \cup_{\ba{ x\in\calM, \norm{x-\xbar}<\varepsilon} }(x+\partial f(x))$ is the whole space; Moreover, there holds $\xbar+\ubar \in \inte(\calU)$, which directly leads to the identifiability of the manifold; see Proposition \ref{prop:local-union} and Theorem \ref{thm:ps-ident}. 
    
    \item Owing to the new local characterization, we are able to show that identification still happens even if condition \eqref{eq:uk-to-ubar} fails, given that essentially $\uk$ is close enough to $\ubar$; see Theorem \ref{thm:ps-ident}. In Corollary \ref{prop:iden-degenerate}, we also discuss the identifiability without non-degenerate condition. 
    
    \item Under proper condition, identification only occurs for large enough $k$. In Proposition \ref{prop:step}, we provide an smallest possible estimation of $k$ after which identification happens. 
\end{enumerate}
To illustrate these theoretical results, we apply our findings to variational inequalities and mini-batch stochastic gradient descent, demonstrating how our approach can effectively explain practical problems.

\paragraph*{Paper organization}
The following of the paper is organized as: in Section \ref{sec:pre} we collect some necessary preliminarily materials. Section \ref{sec:partly-smooth} introduces the concept of partial smoothness and outlines the fundamental properties of partial smooth operators. In Section \ref{sec:identifiability}, we discuss our main results regarding the identifiability of partly smooth operators, offering a new perspective on partial smoothness and extending identifiable conditions to more general cases. Finally, in Section \ref{sec:applications}, we provide several applications to illustrate the identification of partly smooth operators with concrete experimental results.

\section{Notations and preliminaries}\label{sec:pre}

We denote $\bbN$ the set of non-negative integers and $k \in \bbN$ the index. $\bar{\bbR} = \bbR\cup\Ba{+\infty}$ denotes the extended real line. 
$\Rn,\Rm$ are the standard finite dimensional Euclidean spaces, equipped with inner product $\iprod{\cdot}{\cdot}$ and norm $\norm{\cdot}$. 

Define $\ball{r}{x}$ the ball with center $x$ and radius $r$. 
Let $\cS \subset \Rn$ be a nonempty closed convex set, if its interior exists, we denote it as $\inte(\cS)$, the boundary is denoted as $\bdy(\cS)$; If the interior does not exist, then $\ri(\cS )$ denotes its relative interior, and $\rbd(\cS)$ denotes the relative boundary. We denote $\Aff(\cS )$ its affine hull, $\Lin(\cS ) = \Aff(\cS ) - x, \forall x\in\cS$ the subspace parallel to it. 
The distance function between a point $z\in\Rn$ and $\cS$ is defined by
\[
\dist(z, \cS) \eqdef \inf_{x\in\cS} \norm{z-x} .
\]
Denote $\proj_{\cS }$ the orthogonal projector onto $\cS $ and $\normal{\cS}$ its normal cone operator.

Given a proper, lower semi-continuous (l.s.c.) function $f:\Rn \to \bar{\bbR}$ and $x\in \dom(f)$, the Fr\'echet (or regular) subdifferential $\partial^{F} f(x)$ of $f$ at $x$, is the set of vectors $v\in\bbR^{n}$ that satisfies 
$$
\liminf_{z\to x ,\,  z \neq x}~ \sfrac{1}{\norm{x-z}} \pa{f(z) - f(x) - \iprod{v}{z-x}} \geq 0 .
$$ 
%\[
%\liminf_{z\to x ,\,  z \neq x} \sfrac{1}{\norm{x-z}} \Pa{R(z) - R(x) - \iprod{v}{z-x}} \geq 0  .
%\]
If $x \notin \dom(f)$, then $\partial^{F} f(x) = \emptyset$. 
The limiting-subdifferential (or simply subdifferential) of $f$ at $x$, written as $\partial f(x)$, is defined as 
$$
\partial f(x) \eqdef \bBa{ v \in \bbR^{n} : \exists \xk \to x , f(\xk) \to f(x) ,~   \vk \in \partial^{F} f(\xk) \to v } .
$$ 

Denote $\dom(\partial f) \eqdef \Ba{x\in\bbR^n: \partial f(x) \neq \emptyset}$, an element in $\partial f$ is called the subgradient. 

Let $S\subseteq \Rn$ be a set, though indicator function $\iota_{S}(x)$ of the set, the Fr\'echet (or regular) normal cone to $S$ at a point $x\in S$ is defined by $\sN^{F}_{S}(\xbar) = \partial^{F} \iota_{S}(\xbar)$, and the (limiting) normal cone is $\sN_{S}(\xbar) = \partial \iota_{S}(\xbar)$. Both normal cones are defined to be empty for $x\notin S$. $S$ is said to be (Clarke) regular at $\xbar$ if it is locally closed at $\xbar$ and the two normal cones agree. Given a function $f:\Rn \to \bar{\bbR}$, its epi-graph is defined as ${\rm epi}(f) \eqdef \Ba{ (x,t) \in \Rn \times \bbR \mid f(x) \leq t } $, then $f$ is regular at $\xbar$ if its epi-graph is regular at $(\xbar, f(\xbar))$; in this case $\partial^F f(\xbar) = \partial f(\xbar)$.

\subsection{Set-valued operators and motononicity}

Let $A: \Rn \setvalued \Rn$ be a set-valued operator, the domain of $A$ is $\dom(A) = \ba{ x\in \Rn \mid A(x) \neq \emptyset }$, the range of $A$ is $\ran(A) = \ba{u \in \Rn \mid \exists x \in \Rn : u \in A(x) }$, the graph of $A$ is the set $\gra(A)=\ba{ (x,u) \in \Rn \times \Rn \mid u \in A(x) }$, and its zeros set is $\zer(A) = \ba{ x\in\Rn \mid 0 \in A(x) }$. For any $u \in A(x)$, we call it a  {\it dual vector} of $x$.

\subsubsection{Continuity of set-valued operators}

Different from single-valued analysis, there are two distinct concepts for the continuity of set-valued operators: lower semi-continuity and upper semi-continuity.\footnote{Lower and upper semi-continuity are also referred to as inner and outer semi-continuity. Here we adopt these notion from the book \cite{aubin2009set}, and these notions should not be confused with the lower and upper semi-continuity of real-valued functions.} The definitions below are from \cite{aubin2009set}.

\begin{definition}[Lower semi-continuity]\label{def:lower-sc}
A set-valued operator $A: \Rn \setvalued \Rn$ is called \emph{lower} semi-continuous at $\xbar \in \dom(A)$ if and only if for any $\ubar \in A(\xbar)$ and any sequence $\xk \in \dom(A)$ converging to $\xbar$, there exists a sequence of dual vectors $\uk \in A(\xk)$ converging to $\ubar$. It is moreover said to be lower semi-continuous if it is lower semi-continuous at every point in $x \in \dom(A)$. 
\end{definition}

In the single-valued case, the above definition is equivalent to: for any open subset $\calU \subset \Rn$ such that $\calU \cap A(\xbar) \neq \emptyset$,
\[
\exists \eta > 0 ~~~\st~~~ \forall x \in \ball{\eta}{\xbar},~ A(x) \cap \calU \neq \emptyset .
\]

\begin{definition}[Upper semi-continuity]\label{def:upper-sc}
A set-valued operator $A: \Rn \setvalued \Rm$ is called \emph{upper} semi-continuous at $\xbar \in \dom(A)$ if and only if for any for any neighborhood $\calU$ of $A(\xbar)$,
\[
\exists \eta > 0 ~~~\st~~~ \forall x \in \ball{\eta}{\xbar},~ A(x) \subset \calU .
\]
It is moreover said to be upper semi-continuous if it is upper semi-continuous at every point in $x \in \dom(A)$. 
\end{definition}

Alternatively, upper semi-continuity means that given a sequence $\xk \to \xbar$ with $\uk\in A(\xk)$ and $\uk\to\ubar$, then there holds $\ubar \in A(\xbar)$. 
When $A(x)$ is compact, $A$ is upper semi-continuous at $x$ if and only if 
\[
\forall \varepsilon > 0,~ \exists \eta > 0 ~~~ \st ~~~ \forall x \in \ball{\eta}{\xbar},~~ A(x) \subset \ball{\varepsilon}{A(\xbar)}  .
\]
Noted that this definition is a natural extension of the definition of a continuous single-valued operator.

%\vskip3mm

\begin{lemma}[{\cite[Theorem~8.6, Proposition~8.7]{rockafellar1998variational}}]
Let $f:\Rn \to \bar{\bbR}$, and $x\in\dom(f)$ with $f(x)$ finite, then both $\partial^{F} f(x), \partial f(x$) are closed, with $\partial^{F} f(x)$ being convex and $\partial^{F} f(x) \subset \partial f(x)$. 
%If $f$ is lower semi-continuous at $x$, it is (subdifferentially) regular at $x$ if and only if $\partial^{F} f(x) = \partial f(x)$.
If $f$ is proper and lower semi-continuous functions, then its (limiting) subdifferential is upper semi-continuous. %, and in general not lower semi-continuous. 
\end{lemma}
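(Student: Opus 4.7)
The plan is to split the lemma into its atomic claims and treat each using standard $\liminf$ manipulations together with one Cantor-style diagonal extraction. For closedness of $\partial^{F} f(x)$, I would take $v_n \in \partial^{F} f(x)$ converging to some $v$ and write the defining quotient for $v$ as the quotient for $v_n$ plus the correction term $\langle v_n - v,\, z-x\rangle / \|z-x\|$, which is bounded in absolute value by $\|v_n - v\|$ via Cauchy--Schwarz. Taking $\liminf_{z\to x}$ first yields $\liminf \geq -\|v_n - v\|$, and then letting $n\to\infty$ gives nonnegativity, hence $v \in \partial^{F} f(x)$. Convexity is analogous: the quotient is affine in its $v$-argument, so the $\liminf$ of a convex combination of the quotients for $v_1, v_2 \in \partial^{F} f(x)$ is at least the convex combination of the two individual $\liminf$'s, each of which is nonnegative.

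For the inclusion $\partial^{F} f(x) \subseteq \partial f(x)$, the constant sequences $x_k \equiv x$ and $v_k \equiv v$ trivially satisfy the three requirements ($x_k\to x$, $f(x_k)\to f(x)$, $v_k \in \partial^{F} f(x_k) \to v$) in the definition of $\partial f(x)$. For closedness of $\partial f(x)$, I would run a diagonal extraction: if $v_n \to v$ with $v_n \in \partial f(x)$, each $v_n$ is witnessed by an approximating array $(x_{n,k}, v_{n,k})_k$ with $x_{n,k}\to x$, $f(x_{n,k})\to f(x)$, and $\partial^{F} f(x_{n,k}) \ni v_{n,k}\to v_n$; choosing $k(n)$ large enough that each of $\|x_{n,k(n)} - x\|$, $|f(x_{n,k(n)}) - f(x)|$ and $\|v_{n,k(n)} - v_n\|$ lies below $1/n$ exhibits $v$ as a limit along the diagonal, so $v\in\partial f(x)$.

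For upper semi-continuity of $\partial f$, I would take $x_n\to\bar{x}$ with $u_n\in\partial f(x_n)$ and $u_n\to\bar{u}$, aiming to show $\bar{u}\in\partial f(\bar{x})$. Unpacking each $u_n$ gives an approximating array $(y_{n,k}, w_{n,k})$ with $y_{n,k}\to x_n$, $f(y_{n,k})\to f(x_n)$, and $\partial^{F} f(y_{n,k}) \ni w_{n,k}\to u_n$; a single diagonal extraction then produces $y_n\to \bar{x}$, $w_n\in\partial^{F} f(y_n)$, and $w_n\to\bar{u}$. The main obstacle, and in my view the only nonroutine part of the argument, is the attentive-convergence requirement $f(y_n)\to f(\bar{x})$ hidden inside the definition of $\partial f(\bar{x})$: lower semi-continuity of $f$ only yields $\liminf_n f(y_n) \geq f(\bar{x})$, and in general $f(x_n)$ need not converge to $f(\bar{x})$ at all. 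The standard resolution, and what Rockafellar--Wets actually prove, is to read the upper semi-continuity of $\partial f$ as graph closure restricted to $f$-attentive sequences (so the additional requirement $f(x_n)\to f(\bar{x})$ is imposed on the outer sequence); under this restriction the diagonal can be selected so that $f(y_n)\to f(\bar{x})$, and the conclusion $\bar{u}\in\partial f(\bar{x})$ then follows immediately from the defining sequential formulation.
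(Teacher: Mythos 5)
Your argument is correct, but note that the paper supplies no proof of this lemma at all: it is quoted directly from Rockafellar and Wets (Theorem 8.6 and Proposition 8.7), so there is no internal proof to compare against. Your treatment of the four elementary claims is the standard one and is sound: the Cauchy--Schwarz perturbation plus superadditivity of $\liminf$ for closedness and convexity of $\partial^F f(x)$, constant sequences for the inclusion, and a diagonal extraction for closedness of $\partial f(x)$. The genuinely valuable part of your write-up is the last point. Outer/upper semicontinuity of $\partial f$ for a merely proper l.s.c.\ function holds only along $f$-attentive sequences, i.e.\ one must additionally impose $f(x_n)\to f(\bar x)$ on the outer sequence; without that qualifier the claim is false. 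A one-dimensional counterexample: $f(x)=1-x$ for $x>0$ and $f(x)=0$ for $x\le 0$ is proper and l.s.c., one computes $\partial f(0)=[0,+\infty)$, yet $x_n=1/n\to 0$ with $u_n=-1\in\partial f(x_n)$ gives $u_n\to-1\notin\partial f(0)$ (here $f(x_n)\to 1\ne f(0)$, so the sequence is not $f$-attentive). Since the paper's own Definition of upper semi-continuity is the plain sequential/neighborhood one with no $f$-attentiveness, the lemma as transcribed is slightly stronger than what Proposition 8.7 of Rockafellar--Wets actually delivers; your reading --- graph closure restricted to $f$-attentive convergence --- is the correct one, and your diagonal extraction does go through under that reading because the extra error $|f(y_{n,k(n)})-f(x_n)|<1/n$ combines with $f(x_n)\to f(\bar x)$ to give $f(y_{n,k(n)})\to f(\bar x)$.
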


\subsubsection{Monotoncity of set-valued operators} 

%An important class of set-valued operators is the monotone operators. 
Below we provide some basic definition of monotonicity, and refer to \cite{bauschke2011convex} for dedicated discussions.

\begin{definition}%[Monotone operator]
A set-valued operator $A : \Rn \setvalued \Rn$ is {monotone} if
\beqn
\iprod{x-y}{u-v} \geq 0 ,~~~ {\forall (x,u)\in\gra(A), ~ \forall (y,v)\in\gra(A)}    .
\eeqn
It is moreover called {maximally monotone} if its graph $\gra(A)$ can not be contained in the graph of any other monotone operators. 

\end{definition}

An important source of monotone operators is the subdifferential of proper lower semi-continuous (l.s.c.) and convex functions.

\begin{lemma}[{\cite[Baillon--Haddad theorem]{baillon1977quelques}}]
\label{lemma:prop_subdiff}
Let $f: \Rn \to \bbR\cup \{+\infty\}$ be proper l.s.c and convex, then $\partial f$ is maximally monotone. 
\end{lemma}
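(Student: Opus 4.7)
The plan is to establish the two parts of maximal monotonicity separately: first the straightforward monotonicity, then the subtler maximality. Monotonicity follows immediately from the subgradient inequality for convex functions: given $(x,u),(y,v)\in\gra(\partial f)$, writing $f(y)\ge f(x)+\iprod{u}{y-x}$ and $f(x)\ge f(y)+\iprod{v}{x-y}$ and adding these two inequalities yields $\iprod{x-y}{u-v}\ge 0$. This part is essentially a one-line calculation and the only thing one must be careful about is that, since $f$ is proper, if $x\in\dom(\partial f)$ then automatically $x\in\dom(f)$ and $f(x)$ is finite, so the subgradient inequalities are meaningful.

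For maximality, I would argue via the Minty-type surjectivity criterion, showing that $\ran(I+\partial f)=\Rn$. Given any $w\in\Rn$, define the Moreau envelope
\[
g(x) \eqdef f(x) + \tfrac{1}{2}\norm{x-w}^2 .
\]
The function $g$ is proper, lower semi-continuous and strongly convex; since every proper lower semi-continuous convex function on $\Rn$ is minorized by an affine function, $g$ is also coercive, so it attains its infimum at a unique point $\xbar$. Applying the sum rule for subdifferentials (valid here because the quadratic term is smooth) at the minimizer gives $0\in (\xbar-w)+\partial f(\xbar)$, i.e.\ $w\in \xbar+\partial f(\xbar)$. Hence $I+\partial f$ is surjective.

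From here, maximality can be concluded directly without quoting Minty's theorem as a black box. Suppose $(x_0,u_0)\in \Rn\times\Rn$ satisfies $\iprod{x_0-y}{u_0-v}\ge 0$ for every $(y,v)\in\gra(\partial f)$. Apply the construction of the previous paragraph with $w=x_0+u_0$ to produce $y\in\Rn$ and $v=(x_0+u_0)-y\in\partial f(y)$. Plugging this specific $(y,v)$ into the assumed inequality gives
\[
0 \leq \iprod{x_0-y}{u_0-v} = \iprod{x_0-y}{y-x_0} = -\norm{x_0-y}^2 ,
\]
which forces $y=x_0$ and therefore $v=u_0$, so $(x_0,u_0)\in\gra(\partial f)$. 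This is precisely maximal monotonicity.

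The main obstacle, as often in such results, is the existence step used to show $\ran(I+\partial f)=\Rn$. Two ingredients need care: the coercivity of $g$ (which relies on the affine minorization of proper l.s.c.\ convex functions) and the validity of the sum rule $\partial g = \partial f + (\cdot-w)$ at the minimizer, which is immediate in finite dimension because one of the summands is everywhere differentiable. Once these are in place, everything else is essentially mechanical.
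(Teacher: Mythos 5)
Your proof is correct: the paper imports this statement as a cited result without giving a proof, and your argument is the standard Rockafellar--Minty one (monotonicity from the subgradient inequality, maximality via surjectivity of $I+\partial f$ obtained by minimizing $f+\tfrac12\|\cdot-w\|^2$). The only step you leave implicit is that for a proper l.s.c.\ convex $f$ the limiting subdifferential used in the paper coincides with the convex subdifferential, so the subgradient inequalities apply; this is standard and does not constitute a gap.
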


Let $A : \Rn \setvalued \Rn$ be set-valued and $\gamma > 0$, its \emph{resolvent}, denoted by $\rslt_{\gamma A}$, is defined by
\beq\label{eq:resolvent}
\rslt_{\gamma A} \eqdef \pa{\Id + \gamma A}^{-1}  .
\eeq
Note that here maximal monotonicity is not imposed here, hence given a point $x\in\Rn$, $\rslt_{\gamma A}(x)$ is a set which can be empty. When $A$ is maximally monotone, the following result ensures $\rslt_{\gamma A}(x)$ is a singleton.

\begin{lemma}[{\cite[Corollary 23.11]{bauschke2011convex}}]
\label{lemma:resolvent-maximally-monotone}
Let $A:\Rn \setvalued \Rn$ be maximally monotone and $\gamma > 0$, then $\rslt_{\gamma A}:\Rn \to \Rn$ is single-valued, maximally monotone and firmly nonexpansive.
\end{lemma}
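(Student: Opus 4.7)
The plan is to establish the three conclusions in the natural order: single-valuedness, firm nonexpansiveness, and finally maximal monotonicity (with the full-domain claim embedded in the last step). The first two rely only on monotonicity of $A$, not maximality, and will be pure algebra; the full-domain property is where maximality is essential, and will be the main obstacle.

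First I would prove single-valuedness. Fix $x \in \Rn$ and suppose $y_1, y_2 \in \rslt_{\gamma A}(x)$. Then $(y_i, \tfrac{x-y_i}{\gamma}) \in \gra(A)$ for $i = 1,2$, so monotonicity of $A$ gives
\[
0 \leq \iprod{y_1 - y_2}{\tfrac{x-y_1}{\gamma} - \tfrac{x-y_2}{\gamma}} = -\tfrac{1}{\gamma}\norm{y_1-y_2}^2,
\]
forcing $y_1 = y_2$. Repeating the same estimate at two points $x_1, x_2 \in \dom(\rslt_{\gamma A})$ with images $y_i = \rslt_{\gamma A}(x_i)$ yields $\iprod{y_1 - y_2}{x_1 - x_2} \geq \norm{y_1 - y_2}^2$, which is firm nonexpansiveness (and in particular $1$-Lipschitz continuity).

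The hard part is to show $\dom(\rslt_{\gamma A}) = \Rn$, equivalently $\ran(\Id + \gamma A) = \Rn$; this is the content of Minty's theorem, and it is the only step that genuinely requires maximality. The cleanest plan is to invoke Minty's theorem as a black box. If a self-contained route is preferred, I would build it from the Fitzpatrick function $F_A(y,v) \eqdef \sup_{(a,b)\in\gra(A)}\pa{\iprod{y}{b} + \iprod{a}{v} - \iprod{a}{b}}$, which is proper, l.s.c., convex and, by maximality of $A$, satisfies $F_A(y,v) \geq \iprod{y}{v}$ with equality iff $(y,v) \in \gra(A)$. For fixed $x \in \Rn$, minimising the coercive, strictly convex function $(y,v) \mapsto F_A(y,v) - \iprod{y}{v} + \tfrac{1}{2\gamma}\norm{y + \gamma v - x}^2$ over $\Rn \times \Rn$ then produces a pair in $\gra(A)$ realising $y + \gamma v = x$, i.e.\ $y \in \rslt_{\gamma A}(x)$.

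Finally, maximal monotonicity of $\rslt_{\gamma A}$ follows from the previous steps: firm nonexpansiveness implies monotonicity, while single-valuedness together with the full domain and $1$-Lipschitz continuity rule out any proper monotone extension (any candidate extension point is already the image of the resolvent at some $x$), so $\rslt_{\gamma A}$ is maximally monotone.
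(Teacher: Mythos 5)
The paper does not prove this statement at all: it is quoted verbatim from \cite[Corollary~23.11]{bauschke2011convex} and used as a known tool, so there is no in-paper argument to compare against. Judged on its own terms, your skeleton is the standard one and the first two steps are complete and correct: single-valuedness and the inequality $\iprod{y_1-y_2}{x_1-x_2}\geq\norm{y_1-y_2}^2$ follow from plain monotonicity exactly as you write, and you correctly isolate the full-domain claim (Minty's theorem) as the only place where maximality enters. Citing Minty as a black box is entirely appropriate here.

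Two caveats. First, your ``self-contained'' Fitzpatrick route is not yet a proof. The functional $F_A(y,v)-\iprod{y}{v}+\tfrac{1}{2\gamma}\norm{y+\gamma v-x}^2$ does simplify to $F_A$ plus the strictly convex quadratic $\tfrac{1}{2\gamma}\norm{y}^2+\tfrac{\gamma}{2}\norm{v}^2$ plus linear terms, but the only a priori lower bound $F_A(y,v)\geq\iprod{y}{v}$ yields $F_A(y,v)+\tfrac{1}{2\gamma}\norm{y}^2+\tfrac{\gamma}{2}\norm{v}^2\geq\tfrac{1}{2\gamma}\norm{y+\gamma v}^2$, which degenerates along the directions $y=-\gamma v$; coercivity is therefore not established. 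More importantly, even granting a minimizer, you must show the minimum value is $0$ (so that equality holds in $F_A\geq\iprod{\cdot}{\cdot}$ and the quadratic vanishes simultaneously); that step is the actual content of Minty's theorem and in the Fitzpatrick-function proofs it is supplied by Fenchel--Rockafellar duality applied to $F_A$ and the quadratic, together with the inequality $F_A^*(v,y)\geq\iprod{y}{v}$. Second, your justification of maximality of $\rslt_{\gamma A}$ (``any candidate extension point is already the image of the resolvent at some $x$'') does not rule out a monotone extension adding a pair $(x_0,u_0)$ with $u_0\neq\rslt_{\gamma A}(x_0)$; what you need is the standard fact that a monotone, continuous, everywhere-defined single-valued operator is maximally monotone (take $x=x_0+th$, divide by $t$, let $t\downarrow 0$), or, more economically, observe that $(\rslt_{\gamma A})^{-1}=\Id+\gamma A$ is maximally monotone and that inverses of maximally monotone operators are maximally monotone.
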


When $A=\partial f$ is the subdifferential of a proper l.s.c. function $f$, computing the resolvent of $A$ is equivalent to evaluating the proximal operator of $f$, that is
\[
(\Id + \gamma \partial f)^{-1}(\cdot)
= \prox_{\gamma f}(\cdot)
\eqdef \argmin_{x} \gamma f(x) + \sfrac{1}{2}\norm{x-\cdot}^2 .
\]

When $f$ is only proper and l.s.c., the proximal operator $\prox_{\gamma f}(\cdot)$ is also set-valued. When $f$ is moreover convex, $\prox_{\gamma f}(\cdot)$ becomes well-defined and single-valued as stated in Lemma \ref{lemma:resolvent-maximally-monotone}.

\subsection{Prox-regularity}

To obtain well-posedness of the proximal operator in the nonconvex setting, certain regularity property is needed for $f$, which is described in the definition below.

\begin{definition}[Prox-Regularity \cite{poliquin1996prox}]\label{def:prox-regular}
A function $f : \Rn \to \bar{\bbR}$ is prox-regular at a point $\xbar\in\dom(f)$ for a subgradient $\ubar \in \partial f(\xbar)$, if there exist $r > 0$ and $\varepsilon > 0$ such that
\beq\label{eq:prox-regular}
f(x') > f(x) + \iprod{u}{x' - x} - \sfrac{r}{2} \|x' - x\|^2 
\eeq
whenever $\|x' - \xbar\| < \varepsilon$ and $\|x - \xbar\| < \varepsilon$, with $x'\neq x$ and $|f(x) - f(\xbar)| < \varepsilon$, for $\|u - \ubar\| < \varepsilon$ with $u \in \partial f(x)$.
Moreover, $f$ is prox-regular at $\xbar$ if it is prox-regular at $\xbar$ for every $\vbar \in \partial f(\xbar)$.  
\end{definition}

Prox-regularity is a local characterization of the function with restrictions to both $x$ and $u\in\partial f$, it allows to guarantee local single-valuedness of $\prox_{\gamma f}$ under proper choice of $\gamma$. 
Given $\varepsilon > 0$, the local \emph{$f$-attentive neighborhood} \cite[Definition 3.1]{poliquin1996prox} is defined by $\Ba{x \in \Rn \mid \|x - \xbar\| < \varepsilon, \ |f(x) - f(\xbar)| < \varepsilon }$. 
From this, we can further define the $f$-attentive $\varepsilon$-localization, denoted as $T(x)$, of the subdifferential $\partial f$ at $(\xbar, \ubar)$, 
\[
T(x)
= 
\left\{
\begin{aligned}
    \Ba{u\in\partial f(x) \mid \norm{u-\ubar}\leq \varepsilon } \quad & {\rm for~} \|x - \xbar\| < \varepsilon {\rm ~and~} \ |f(x) - f(\xbar)| < \varepsilon \\
    \emptyset \quad & o.w.
\end{aligned}
\right.
\]

\begin{lemma}[{\cite[Theorem 3.2]{poliquin1996prox}}]
Let $f$ be locally l.s.c. at $\xbar$. If $f$ is prox-regular at $\xbar$ for $\ubar$, then there exists $\varepsilon > 0$ and $r > 0$, with
$
f(x) > f(\xbar) + \iprod{\ubar}{x - \xbar} - \frac{r}{2} \|x - \xbar\|^2 , ~~ 0< \norm{x-\xbar} < \varepsilon
$, 
such that the mapping $(\Id + r\partial f)^{-1}$ has the following {\it single-valuedness} property near $\zbar = \xbar + r \ubar$: if $\norm{z-\zbar} < \varepsilon$, and if for $i=0,1$, one has
\[
x_i \in (\Id + r\partial f)^{-1}(z) \quad{\rm with}\quad \norm{x_i-\xbar}<\varepsilon,~~ \abs{f(x_i)-f(\xbar)}<\varepsilon ,
\]
then necessarily $x_0=x_1$. 

\end{lemma}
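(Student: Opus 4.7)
My plan is to split the claim into two parts: first establishing the displayed quadratic lower bound at $(\xbar,\ubar)$, and then proving local single-valuedness of $(\Id+r\partial f)^{-1}$ near $\zbar$ by a hypomonotonicity-type argument.

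For the displayed inequality $f(x) > f(\xbar) + \iprod{\ubar}{x-\xbar} - \frac{r}{2}\|x-\xbar\|^2$, I would simply specialize Definition \ref{def:prox-regular} to the choice $x:=\xbar$, $u:=\ubar$, $x':=x$. Since $\ubar\in\partial f(\xbar)$ and $|f(\xbar)-f(\xbar)|=0$ trivially satisfy the side conditions of prox-regularity, the inequality in the statement is exactly what Definition \ref{def:prox-regular} delivers at these arguments for $0<\|x-\xbar\|<\varepsilon$, with the same modulus $r$.

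For single-valuedness I argue by contradiction. Suppose $\|z-\zbar\|<\varepsilon$ and that two distinct points $x_0,x_1\in(\Id+r\partial f)^{-1}(z)$ both satisfy $\|x_i-\xbar\|<\varepsilon$ and $|f(x_i)-f(\xbar)|<\varepsilon$. By definition of the resolvent, $u_i:=(z-x_i)/r\in\partial f(x_i)$, and since $\ubar=(\zbar-\xbar)/r$, the triangle inequality gives $\|u_i-\ubar\|\le(\|z-\zbar\|+\|x_i-\xbar\|)/r$, which is made smaller than $\varepsilon$ after an initial shrinking so that the prox-regularity inequality applies at each pair $(x_i,u_i)$. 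Next I apply prox-regularity once with $(x,u,x')=(x_0,u_0,x_1)$ and once with $(x,u,x')=(x_1,u_1,x_0)$, and sum the two strict inequalities. The function values cancel, yielding
\[
\iprod{u_0-u_1}{x_0-x_1} \;>\; -\,r\,\|x_0-x_1\|^2 .
\]
On the other hand, the resolvent identity $u_0-u_1=-(x_0-x_1)/r$ gives the left-hand side exactly as $-\|x_0-x_1\|^2/r$, so the combined estimate reads $-\|x_0-x_1\|^2/r > -r\,\|x_0-x_1\|^2$. Once $r$ is calibrated to be compatible with the hypomonotonicity modulus furnished by prox-regularity (equivalently, once the resolvent parameter is small enough relative to that modulus), this inequality is incompatible with $x_0\neq x_1$, forcing $x_0=x_1$.

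The main obstacle I anticipate is the bookkeeping of constants in this last step: the single letter $r$ plays a dual role, serving as both the proximal parameter of the resolvent and the hypomonotonicity modulus of $\partial f$, so care is needed to pick $r$ (and to shrink $\varepsilon$ accordingly, as well as the neighborhood of $\zbar$ used in the estimate on $\|u_i-\ubar\|$) so that these two competing uses are mutually consistent and produce a strict inequality that genuinely excludes $x_0\neq x_1$. Once this calibration is pinned down, the hypomonotonicity-plus-resolvent-identity mechanism closes the argument with no further work.
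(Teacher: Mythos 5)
The paper does not actually prove this lemma; it is imported verbatim (with a citation) from Poliquin--Rockafellar, so there is no in-paper argument to compare against. Judged on its own terms, your reconstruction follows the standard route: the displayed quadratic lower bound is indeed just Definition~\ref{def:prox-regular} specialized to $x=\xbar$, $u=\ubar$ (all side conditions are trivially met there), and the single-valuedness mechanism --- apply prox-regularity symmetrically at $(x_0,u_0)$ and $(x_1,u_1)$, sum to get hypomonotonicity $\iprod{u_0-u_1}{x_0-x_1}\ge -r\norm{x_0-x_1}^2$ on the $f$-attentive localization, then collide this with the resolvent identity --- is exactly the argument behind the cited theorem. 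Your handling of the localization (bounding $\norm{u_i-\ubar}$ via the resolvent relation and shrinking $\varepsilon$ so that both applications of prox-regularity are legitimate) is also correct.

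The one point you flag but do not actually close is a genuine gap, and it is worth being precise about why. Your combined inequality reads $-\tfrac{1}{r}\norm{x_0-x_1}^2 > -r\norm{x_0-x_1}^2$, which after dividing by $\norm{x_0-x_1}^2>0$ is simply $r^2>1$. This is a contradiction only when $r\le 1$; for $r>1$ nothing is excluded, and no amount of shrinking $\varepsilon$ or the neighborhood of $\zbar$ helps, because enlarging the prox-regularity modulus only weakens the hypomonotonicity estimate while simultaneously enlarging the resolvent parameter. The clean statement your argument actually proves is single-valuedness of $(\Id+\lambda\partial f)^{-1}$ (localized) for any $\lambda$ with $\lambda r\le 1$ --- equivalently, monotonicity of $T+r\Id$ for the $f$-attentive localization $T$, which is what \cite[Theorem 3.2]{poliquin1996prox} asserts and what the paper itself invokes in the sentence following the lemma. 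The conflation of the resolvent parameter with the prox-regularity modulus is an artifact of the paper's restatement; to make your proof airtight you should either decouple the two parameters (prove the claim for $(\Id+\lambda\partial f)^{-1}$, $\lambda\in\left]0,1/r\right]$) or note explicitly that the statement as written presupposes the modulus can be taken $\le 1$.
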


The vector $\ubar$ is called a proximal subgradient of $f$ at $\xbar$. 
Equivalently, the above result indicates the $f$-attentive $\varepsilon$-localization $T$ satisfies $T + r \Id$ is monotone, hence the resolvent $(T + r \Id)^{-1}$ is single-valued. 
Extending the prox-regularity of function to general set-valued operators via subdifferential, we need the following localization.

\begin{definition}[Localization]\label{def:localization}
Let $A: \Rn \setvalued \Rn$ be a set-valued operator. For $\varepsilon > 0$, the $\varepsilon$-localization, denoted by $A_{\varepsilon}$, of $A$ around $(\xbar, \ubar) \in \gra(A)$ is defined by
\beq\label{eq:eps-localization}
A_{\varepsilon}(x)
=
\left\{
\begin{aligned}
&\ba{u \in A(x) : \norm{u-\ubar} < \varepsilon}, && \textrm{if}~~ \norm{x-\xbar} < \varepsilon  , \\
&\emptyset,  && \textrm{otherwise.}
\end{aligned}
\right.
\eeq
\end{definition}

Now we define the local regularity of resolvent. 

\begin{definition}[Resolvent-regularity]\label{def:prox-reg}
A set-valued operator $A: \Rn \setvalued \Rn$ is called \emph{resolvent-regular} at $\xbar \in \dom(A)$ for $\ubar \in A(\xbar)$, if there exist $\varepsilon > 0$ and $r > 0$ such that: $r\Id + A_{\varepsilon}$ is monotone where $A_{\varepsilon}$ is the $\varepsilon$-localization of $A$ around $(\xbar,\ubar)$, and for each $\gamma \in ]0, 1/r[$ there is a neighborhood $\Rn$ of $\xbar$ such that the operator $\Jj_{\gamma \Ae}$ is single-valued and continuous on $\Rn$ and 
\[
\Jj_{\gamma \Ae} = \pa{\Id + \gamma A_{\varepsilon}}^{-1}  .
\]
Furthermore, $A$ is resolvent-regular at $\xbar$ if it is resolvent-regular at $\xbar$ for every $\ubar \in A(\xbar)$. 
\end{definition}

\section{Partial smoothness}\label{sec:partly-smooth}

In this section, we provide the extension of partial smoothness from functions to operators, accompanied with calculus rules. 
% \cite{LewisPartlySmooth,drusvyatskiy2014optimality}.
Let $\Mm \subset \Rn$ be a $C^p$-smooth manifold with $p\geq1$, given $x \in \Rn$, denote $\tanSp{\Mm}{x}$ the tangent space of $\Mm$ at $x$.

\subsection{Partly smooth functions}

We start with recalling the concept of partly smooth functions and identifiability, as discussed in \cite{LewisPartlySmooth,hare2004identifying}.

 \begin{definition}[Partly smooth function]\label{dfn:psf}
 A function $f:\Rn \to \bar{\bbR}$ is partly smooth at a point $\xbar$ relative to a set $\calM$ containing $\xbar$ if $\calM$ is a $C^p$-smooth manifold around $\xbar$ and 
 \begin{enumerate}[label={\rm (\roman{*})}]
 \item {\pa{\bf Smoothness}} \label{psf:smoothness} 
 $f$ restricted to $\calM$ is $C^p$-smooth around $\xbar$;
 \item {\pa{\bf Prox-regularity}} \label{psf:regularity} 
$f$ is regular at all $x\in\calM$ near $\xbar$, $\partial f(x) \neq \emptyset$. % and prox-regular at $\xbar$;
 \item {\pa{\bf Sharpness}} \label{psf:sharpness} 
$\tanSp{\calM}{\xbar} = \LinHull\pa{\partial f(\xbar)}^\perp$;
 \item {\pa{\bf Continuity}} \label{psf:continuity} 
 $\partial f$ restricted to $\calM$ is continuous at $\xbar$.
 \end{enumerate}
 \end{definition}

Through subdifferential, partial smoothness builds an elegant connection between functions and the manifold $\calM$, and moreover establishes the identifiability of $\calM$. Loosely speaking, identifiability implies that given a sequence $\seq{\xk}$ that converges to $\xbar$, under suitable conditions, $\xk$ will find $\calM$ first and then converge to $\xbar$ along the manifold. Rigorously, we have the following result from \cite{hare2004identifying}.

\begin{proposition}[{\cite[Theorem 5.3]{hare2004identifying}}]\label{prop:iden-function}
Let function $f:\Rn\to\bar{\bbR}$ be $C^p$-partly smooth ($p\geq 2$) at the point $\xbar$ relative to the manifold $\calM$, and prox-regular there with $\ubar \in \ri\Pa{\partial f(\xbar)}$. Suppose $\xk\to\xbar$ and $f(\xk) \to f(\xbar)$. Then for all $k$ large enough, there holds
\[
\xk \in \calM 
\]
if and only if 
\[
\dist\Pa{\ubar, \partial f(\xk) } \to 0 .
\]
\end{proposition}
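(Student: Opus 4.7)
The plan is to handle the two implications separately. The \emph{only if} direction ($\xk \in \calM$ for all large $k$ implies $\dist\pa{\ubar, \partial f(\xk)} \to 0$) follows directly from the Continuity axiom of Definition \ref{dfn:psf}. Restricted to $\calM$, the set-valued map $\partial f$ is continuous at $\xbar$, and in particular inner semi-continuous in the sense of Definition \ref{def:lower-sc}. Since $\ubar \in \partial f(\xbar)$ and $\xk \to \xbar$ along $\calM$, inner semi-continuity produces dual vectors $\uk \in \partial f(\xk)$ with $\uk \to \ubar$, whence $\dist\pa{\ubar, \partial f(\xk)} \leq \norm{\uk - \ubar} \to 0$.

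For the converse, I would argue by contradiction: assume along a subsequence (relabelled) that $\xk \notin \calM$ for every $k$, while $\xk \to \xbar$, $f(\xk) \to f(\xbar)$, and there exist $\uk \in \partial f(\xk)$ with $\uk \to \ubar$. Since $\calM$ is a $C^p$-smooth manifold near $\xbar$ with $p \geq 2$, the metric projection $\hat{x}_k = \proj_{\calM}(\xk)$ is well defined and single-valued for $k$ large, with $\hat{x}_k \to \xbar$ and $\xk - \hat{x}_k$ lying in $\tanSp{\calM}{\hat{x}_k}^{\perp}$ up to higher-order error. Applying the Continuity axiom along $\hat{x}_k \in \calM$, one can also select $\hat{u}_k \in \partial f(\hat{x}_k)$ with $\hat{u}_k \to \ubar$.

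The crux is then to show $\xk = \hat{x}_k$ for $k$ large. Prox-regularity of $f$ at $\xbar$ for $\ubar$ supplies constants $r, \varepsilon > 0$ and a symmetric pair of subgradient inequalities which, by the standard addition trick, yield the hypomonotonicity
\[
\iprod{\uk - \hat{u}_k}{\xk - \hat{x}_k} \geq -r \norm{\xk - \hat{x}_k}^2 .
\]
I would then decompose $\uk - \hat{u}_k$ along $\tanSp{\calM}{\hat{x}_k}$ and its orthogonal complement, invoking the Sharpness axiom $\tanSp{\calM}{\xbar} = \LinHull\pa{\partial f(\xbar)}^{\perp}$ to identify that orthogonal complement, in the limit, with $\LinHull\pa{\partial f(\xbar)}$. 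Since $\ubar \in \ri\pa{\partial f(\xbar)}$, a neighborhood of $\ubar$ inside $\Aff\pa{\partial f(\xbar)}$ still lies in $\partial f(\xbar)$, and this relative-interior margin pins the normal component of $\uk$ to that of $\hat{u}_k$ modulo terms of order $o\pa{\norm{\xk - \hat{x}_k}}$. Plugging back into the hypomonotonicity above forces $\norm{\xk - \hat{x}_k} = 0$ for $k$ large, the desired contradiction.

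The main obstacle I expect is precisely this last decomposition argument: making the relative-interior margin quantitative, and propagating it from $\partial f(\xbar)$ to nearby $\partial f(\xk)$ and $\partial f(\hat{x}_k)$ via outer semi-continuity of $\partial f$. This is where the hypothesis $\ubar \in \ri\pa{\partial f(\xbar)}$ is genuinely used; on the relative boundary the normal component of $\uk - \hat{u}_k$ could persist in the limit and identification would fail. I would also need $C^2$ smoothness of $\calM$ in order to justify the first-order expansion $\xk - \hat{x}_k \in \tanSp{\calM}{\hat{x}_k}^{\perp} + o\pa{\norm{\xk - \hat{x}_k}}$, so that the tangential/normal splitting is accurate enough for the final contradiction to go through.
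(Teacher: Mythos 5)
Your "only if" direction is fine, and your architecture for the "if" direction -- project onto $\calM$, add the two prox-regularity inequalities to get hypomonotonicity, and use sharpness plus $\ubar\in\ri\Pa{\partial f(\xbar)}$ to kill a nonzero normal displacement -- is a legitimate route that is genuinely different from the one in the literature: Hare--Lewis (and this paper's operator-level generalization, Theorem \ref{thm:ps-ident}) instead show that the $f$-attentively localized proximal map is single-valued near $\xbar+\gamma\ubar$ and sends a full neighborhood of that point into $\calM$, then write $\xk$ as the prox of $\xk+\gamma u_k$. Your route trades that global statement about the prox map for a pointwise variational inequality, which is arguably more elementary.

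However, the closing step as you describe it has a real gap. The hypomonotonicity $\iprod{u_k-\hat u_k}{\xk-\hat x_k}\geq -r\norm{\xk-\hat x_k}^2$ is asymptotically vacuous: both sides tend to $0$, and even if the normal components of $u_k$ and $\hat u_k$ agreed up to $o\pa{\norm{\xk-\hat x_k}}$, that is perfectly consistent with the inequality and forces nothing; "plugging back" does not yield $\xk=\hat x_k$. The missing idea is to test hypomonotonicity not against $\hat u_k$ but against a subgradient shifted by a \emph{fixed} amount in the offending normal direction. Concretely: pass to a subsequence with $\xk\notin\calM$, set $d_k=(\xk-\hat x_k)/\norm{\xk-\hat x_k}\in\normal{\calM}(\hat x_k)$, extract $d_k\to d\in\normal{\calM}(\xbar)=\LinHull\pa{\partial f(\xbar)}$; since $\ubar\in\ri\Pa{\partial f(\xbar)}$ there is $\delta>0$ with $\ubar+2\delta d\in\partial f(\xbar)$, and by the continuity axiom along $\calM$ one can choose $w_k\in\partial f(\hat x_k)$ with $w_k\to\ubar+2\delta d$. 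Hypomonotonicity between $(\xk,u_k)$ and $(\hat x_k,w_k)$ gives $\iprod{u_k-w_k}{d_k}\geq -r\norm{\xk-\hat x_k}\to 0$, while the left-hand side converges to $\iprod{-2\delta d}{d}=-2\delta<0$, a contradiction. This also dissolves what you flagged as the main obstacle: no quantitative, uniformly propagated relative-interior margin is needed, only inner semicontinuity of $\partial f$ along $\calM$ and outer semicontinuity of the normal cone. Two minor repairs: the projection residual $\xk-\hat x_k$ lies in $\normal{\calM}(\hat x_k)$ exactly (first-order optimality of the metric projection), not merely up to higher order; and before invoking \eqref{eq:prox-regular} you must verify the $f$-attentive localization at both base points, which holds because $f(\xk)\to f(\xbar)$ by hypothesis and $f(\hat x_k)\to f(\xbar)$ by smoothness of $f$ restricted to $\calM$.
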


\begin{remark}\label{rmk:iden_func}
Note that identifiability requires two conditions: %, which are quite strong: 
\begin{itemize}
\item The non-degeneracy condition $\ubar \in \ri\Pa{\partial f(\xbar)}$, this ensures the minimality of the manifold $\calM$ to be identified. 

\item Subgradient convergence $\dist\Pa{\ubar, \partial f(\xk) } \to 0 $, which means there exists a sequence $\uk\in\partial f(\xk)$ with $\uk \to \ubar$. If $\seq{\xk}$ is generated by an iterative scheme, this means that each computation within the scheme should be delivered either exactly or with a vanishing error. 
\end{itemize} 
These two conditions together is quite strong in the sense that it is not clear what happens when both fail, which motivates this work.
\end{remark}

\subsection{Partly smooth operators}

Based on the above discussion, we provide the following generalization of partial smoothness from functions to operators.

\begin{definition}\label{dfn:ps-svo}% [Partly smooth set-valued monotone operators]
Let $A:\Rn \setvalued \Rn$ be a set-valued operator, then $A$ is {\it partly smooth} at $\xbar$ relative to a set $\calM \subset \Rn$ containing $\xbar$ if $\calM$ is a $C^p$-manifold around $\xbar$, $A$ is {\it upper semi-continuous} and
\begin{enumerate}[label={\rm (\roman{*})}, ref={\rm \roman{*}}]
% \item {\color{red} Upper semi-continuity here?}
\item \label{ps:regularity} 
({\bf Regularity}) for every $x$ close to $\xbar$, $A(x)$ is nonempty, closed and convex; 
\item \label{ps:sharpness} 
({\bf Sharpness}) $\normal{\calM}(\xbar) = \Lin\pa{A(\xbar)}$, $\proj_{\tangent{\calM}(x)}\pa{A(x)}$ is continuous around $\xbar$ along $\calM$;
\item \label{ps:continuity} 
({\bf Continuity}) $A$ is continuous along $\calM$ near $\xbar$.
\end{enumerate}
\end{definition}

\begin{remark}
 The definition or the idea of partly smooth operators is discussed in \cite{drusvyatskiy2014optimality,lewis2022partial}, particularly in \cite{lewis2022partial}, where a so-called ``constant rank'' property is proposed. 
 Our Definition \ref{dfn:ps-svo} offers an alternative characterization of partly smooth operators, allowing us to construct a finer local characterization to understand the identifiability, which is the main content of Section \ref{sec:identifiability}. 
\end{remark}

%\begin{remark}
The continuity condition \iref{ps:continuity} above implies that the operator $A$ is both upper and lower semi-continuous along $\calM$. 
A key implication of the sharpness condition \iref{ps:sharpness} of Definition \ref{dfn:ps-svo} is the local normal sharpness presented, as in \cite[Proposition 2.10]{LewisPartlySmooth}.

\begin{proposition}\label{prop:ps-subdifferential}
    The (limiting) subdifferential of partly smooth function is partly smooth. 
\end{proposition}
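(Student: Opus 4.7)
The plan is to verify the three conditions of Definition \ref{dfn:ps-svo} plus the upper-semicontinuity precondition for $A = \partial f$, by translating each of the four items of Definition \ref{dfn:psf} in turn. Upper semi-continuity of $A=\partial f$ is immediate from the Rockafellar--Wets lemma recorded in the preliminaries, which states that the limiting subdifferential of a proper l.s.c.\ function is upper semi-continuous. For the regularity condition \iref{ps:regularity}, nonemptiness of $\partial f(x)$ for $x$ on $\calM$ close to $\xbar$ is part of \iref{psf:regularity}; closedness is given by the same cited lemma; convexity follows from the Clarke-regularity of $f$ at each such $x$, which yields $\partial f(x)=\partial^{F}f(x)$ with the latter convex.

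For sharpness \iref{ps:sharpness}, the first identity is obtained by taking orthogonal complements of \iref{psf:sharpness} of Definition \ref{dfn:psf}:
\[
\normal{\calM}(\xbar) = \tanSp{\calM}{\xbar}^{\perp} = \LinHull(\partial f(\xbar)) = \Lin(A(\xbar)).
\]
The continuity of $\proj_{\tangent{\calM}(x)}(A(x))$ near $\xbar$ along $\calM$ requires more: I would invoke the standard consequence of partial smoothness that $\proj_{\tangent{\calM}(x)}(\partial f(x))$ reduces to a single vector, equal to the Riemannian gradient of $f|_{\calM}$ at $x$. Since $f|_{\calM}$ is $C^{p}$ by \iref{psf:smoothness}, this Riemannian gradient depends continuously (in fact $C^{p-1}$-smoothly) on $x$ along $\calM$, supplying the required continuity of the set-valued projection.

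For the continuity condition \iref{ps:continuity}, I would exploit the well-known fact that partial smoothness propagates along $\calM$: if $f$ is partly smooth at $\xbar$ relative to $\calM$, then it is partly smooth at every $x\in\calM$ sufficiently close to $\xbar$ relative to the same manifold. Applying \iref{psf:continuity} of Definition \ref{dfn:psf} at each such $x$ then upgrades continuity of $\partial f|_{\calM}$ from the single point $\xbar$ to a full neighborhood of $\xbar$ in $\calM$, which is exactly what \iref{ps:continuity} asks for.

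The main obstacle I anticipate is the singleton/Riemannian-gradient identification used in the sharpness step: it is the only piece that does not reduce to a one-line invocation of an existing lemma or to taking an orthogonal complement. Establishing it cleanly requires combining the sharpness identity \iref{psf:sharpness} with the $C^{p}$-smoothness of $f|_{\calM}$ to argue that any two subgradients of $f$ at a point $x\in\calM$ near $\xbar$ must share the same projection onto $\tanSp{\calM}{x}$, namely the intrinsic gradient of $f|_{\calM}$ at $x$; the difference of two such subgradients lies in $\Lin(\partial f(x)) = \normal{\calM}(x)$, which is orthogonal to $\tanSp{\calM}{x}$ and so vanishes under the projection.
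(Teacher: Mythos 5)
Your proposal is correct and takes the same route as the paper, which dismisses this proposition with the single line that it is ``a direct consequence of the properties of subdifferentials'': your item-by-item verification of Definition \ref{dfn:ps-svo} against Definition \ref{dfn:psf} is precisely the direct check that line alludes to, only actually carried out. The two genuinely non-immediate steps --- identifying $\proj_{\tangent{\calM}(x)}\pa{\partial f(x)}$ with the Riemannian gradient of $f|_{\calM}$ via local normal sharpness, and upgrading continuity of $\partial f|_{\calM}$ from the single point $\xbar$ to a relative neighborhood in $\calM$ --- are handled correctly by appeal to standard facts from \cite{LewisPartlySmooth,hare2004identifying}, and the residual wrinkle in your chain of equalities (that Definition \ref{dfn:psf} uses the linear span $\LinHull\pa{\partial f(\xbar)}$ while Definition \ref{dfn:ps-svo} uses the parallel subspace $\Lin\pa{A(\xbar)}$) is an imprecision in the paper's own definitions rather than a gap in your argument.
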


This result is a direct consequence of the properties of subdifferentials. 
The sharpness is a crucial property, as it allows us to bridge the underlying set $\calM$ and the operator. Moreover, it can be extended to the local neighbourhood of $\xbar$ along $\calM$.

\begin{proposition}[Local normal sharpness]\label{prop:lns}
Assume $A:\Rn \setvalued \Rn$ is partly smooth at $\xbar$ for $\vbar \in A(\xbar)$ along $\calM \subset \Rn$, then for all points $x \in \calM$ close to $\xbar$ satisfy
\beqn
\normal{\calM}(x) = \Lin\Pa{A(x)}  .
\eeqn
\end{proposition}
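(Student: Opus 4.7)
The plan is to prove the equality $\normal{\calM}(x)=\Lin\Pa{A(x)}$ for $x\in\calM$ near $\xbar$ by a double inclusion, with the two directions drawing on different ingredients from Definition \ref{dfn:ps-svo}. The inclusion $\Lin\Pa{A(x)}\subseteq\normal{\calM}(x)$ comes from the projection half of the sharpness condition \iref{ps:sharpness}, while the reverse inclusion comes from a dimension count built on the lower semi-continuity implied by \iref{ps:continuity}.

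For the first inclusion, the starting observation is that at $\xbar$ the sharpness identity yields $A(\xbar)\subseteq \vbar+\normal{\calM}(\xbar)$, so the projection $\proj_{\tangent{\calM}(\xbar)}\Pa{A(\xbar)}$ collapses to a single point. I would then invoke the continuity of $x\mapsto\proj_{\tangent{\calM}(x)}\Pa{A(x)}$ along $\calM$ to conclude that this projection remains a singleton throughout an $\calM$-neighborhood of $\xbar$. Together with the closedness and convexity of $A(x)$ from \iref{ps:regularity}, the standard fact that a convex set whose projection onto a subspace is a single point must lie in an affine translate of the orthogonal complement gives $A(x)\subseteq p(x)+\normal{\calM}(x)$ for some $p(x)\in\tangent{\calM}(x)$, whence $\Lin\Pa{A(x)}\subseteq\normal{\calM}(x)$.

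For the reverse inclusion, pick affinely independent $\vbar_0,\ldots,\vbar_m$ in $A(\xbar)$ with $m=\dim\Lin\Pa{A(\xbar)}=\dim\normal{\calM}(\xbar)$. Given any sequence $\calM\ni x_k\to\xbar$, the lower semi-continuity of $A$ along $\calM$ (from \iref{ps:continuity}) produces $v_k^i\in A(x_k)$ with $v_k^i\to\vbar_i$ for each $i$; these stay affinely independent for $k$ large, so $\dim\Lin\Pa{A(x_k)}\geq m$. Since $\calM$ is a $C^p$-manifold, $\dim\normal{\calM}(x)\equiv m$ near $\xbar$, and combining with the first inclusion forces the two subspaces to coincide.

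The main obstacle I anticipate is the first step: rigorously arguing that the set-valued continuity of the projection at a point where its value is a singleton propagates single-valuedness to a whole $\calM$-neighborhood. Upper semi-continuity alone only forces the projections to shrink into arbitrarily small neighborhoods of the limit singleton, not to collapse to it. I would handle this by contradiction: if for some sequence $\calM\ni x_k\to\xbar$ one had $u_k^1,u_k^2\in A(x_k)$ with $\norm{\proj_{\tangent{\calM}(x_k)}(u_k^1-u_k^2)}\geq\delta>0$, a localization of $A$ around $\vbar$ (needed because $A(\xbar)$ may be unbounded along $\normal{\calM}(\xbar)$) together with the lower semi-continuity of the localized operator should extract two cluster points in $A(\xbar)$ whose projections onto $\tangent{\calM}(\xbar)$ remain at distance at least $\delta$, contradicting the single-point value at $\xbar$.
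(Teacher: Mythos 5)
Your overall strategy---double inclusion, with $\Lin\Pa{A(x)}\subseteq\normal{\calM}(x)$ coming from the projection part of the sharpness condition and the reverse inclusion from lower semi-continuity of $A$ along $\calM$---matches the paper's proof in its first half and diverges in its second. For the reverse inclusion the paper argues by contradiction: it extracts unit vectors $\yk\in\normal{\calM}(\xk)$ orthogonal to $\Lin\pa{A(\xk)}$, passes to a cluster point $\ybar\in\normal{\calM}(\xbar)$, and uses the continuity of $A$ along $\calM$ to transport arbitrary pairs $\ubar,\vbar\in A(\xbar)$ back to $A(\xk)$, concluding $\ybar\perp\Lin\pa{A(\xbar)}=\normal{\calM}(\xbar)$, which is absurd for a unit normal vector. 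Your dimension count---transport $m+1$ affinely independent points of $A(\xbar)$ into $A(x_k)$ by lower semi-continuity, use that affine independence is an open condition to get $\dim\Lin\Pa{A(x_k)}\geq m$, and compare with the locally constant dimension of $\normal{\calM}(x)$ together with the already-proved inclusion---is a valid alternative resting on exactly the same ingredient. Once wrapped in the standard contradiction that converts the sequential claim into a neighborhood claim, it proves the same statement, arguably a little more transparently, while the paper's version avoids having to select a maximal affinely independent family.

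Your anticipated ``main obstacle'' in the first inclusion largely dissolves under the paper's reading of condition \iref{ps:sharpness}: there, $\proj_{\tangent{\calM}(x)}\pa{A(x)}$ is understood as a single-valued continuous map on $\calM$ near $\xbar$ (this is how the smooth representative is later defined), so single-valuedness of the projection does not have to be propagated from $\xbar$---it is assumed on a whole $\calM$-neighborhood, and the paper simply writes $\proj_{\tangent{\calM}(x)}\pa{A(x)}=\eta$ and reads off $A(x)\subseteq\eta+\normal{\calM}(x)$, hence $\Lin\pa{A(x)}\subseteq\normal{\calM}(x)$. Do note, however, that the patch you sketch would not close the gap if one insisted on the set-valued reading: positing a \emph{uniform} separation $\delta>0$ between two projected points along the sequence does not cover projections that are non-singletons of diameter tending to zero, and any positive diameter, however small, already destroys the inclusion $\Lin\Pa{A(x_k)}\subseteq\normal{\calM}(x_k)$. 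Since the definition spares you this case, no harm results here, but the contradiction as stated is not a complete substitute for the single-valuedness hypothesis.
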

\begin{proof}% [Proof of Proposition \ref{prop:lns}]
The following is based on the proof of \cite[Proposition 2.10]{LewisPartlySmooth}.
We first show that locally along $\calM$
\beqn
\Lin\Pa{A(x)} \subset \normal{\calM}(x)  .
\eeqn
As we assume that in Definition \ref{dfn:ps-svo} that $\proj_{\tangent{\calM}(x)}\Pa{A(x)}$ is continuous around $\xbar$, denote the projection as $\eta$, then 
\beqn
\begin{aligned}
\proj_{\tangent{\calM}(x)}\Pa{A(x)} = \eta  
&\quad\Longleftrightarrow\quad
A(x) - \eta \subset \normal{\tangent{\calM}(x)}(\eta) = \pa{\tangent{\calM}(x)}^\bot = \normal{\calM}(x)  \\
&\quad\Longleftrightarrow\quad
A(x) \subset \normal{\calM}(x) + \eta  \\
&\quad\Longleftrightarrow\quad  
\Lin\pa{A(x)} \subset \normal{\calM}(x) .
\end{aligned}
\eeqn

Now assume the claim does not hold, then there exists a sequence of points $\xk \in \calM$ approaching $\xbar$ and a sequence of unit vectors $\yk \in \normal{\calM}(\xk)$ orthogonal to $\Lin\pa{A(\xk)}$. Taking a subsequence, we can suppose that $\yk \to \ybar$, since $A$ is upper semi-continuous then $\ybar \in \normal{\calM}(\xbar)$. 

Taking two arbitrary vectors $\ubar, \vbar \in A(\xbar)$, by the continuity of $A$ there exists sequences $A(\xk) \ni \uk \to \ubar$ and $A(\xk) \ni \vk \to \vbar$, and we have $\uk-\vk \in \Lin\pa{A(x)}$ and $\iprod{\yk}{\uk-\vk} = 0$.
Taking the inner product to the limit shows $\iprod{\ybar}{\ubar-\vbar} = 0$. Since $\ubar$ and $\vbar$ are arbitrary chosen, we deduce that $\ybar$ is orthogonal to $\Lin\pa{A(\xbar)}=\normal{\calM}(\xbar)$, which contradicts the fact that $\ybar$ is a unit vector in $\normal{\calM}(\xbar)$.
\end{proof}

\begin{definition}[Smooth representative]
Let $\calM \subset \Rn$ be a $C^p$-smooth manifold containing $\xbar$, $A:\Rn \setvalued \Rn$ a set-valued operator that is partly smooth at $\xbar$ relative to $\calM$. The smooth representative of $A$ around $\xbar$ is a single-valued operator $\tilde{A}: \Rn \to \Rn$ which is continuous around $\xbar$ with $\tilde{A}(x) = \proj_{\tangent{\calM}(x)}\pa{A(x)}$. %$$ \tilde{A}(x) = \proj_{\tangent{\calM}(x)}\pa{A(x)} . $$
\end{definition}

%%%%%%%%%%%
\subsection{Calculus rules}

Similar to \cite{LewisPartlySmooth}, it can be shown that under the {transversality} assumption, the set of partly smooth set-valued operators is closed under addition and pre-composition by a smooth operator. 

Consider two Euclidean spaces $\calX$ and $\calZ$, an open set $\calW\subset \calZ$ containing a point $z$, a smooth map $\Phi: \calW \to \calX$, and a set $\calM \subset \calX$. We say $\Phi$ is transversal to $\calM$ at $z$ if $\calM$ is  manifold around $\Phi(z)$, and
\beqn
\ran\pa{\nabla\Phi(z)} + \tangent{\calM}{\pa{\Phi(z)}} = \calX  ,
\eeqn
or equivalently
\beqn
\ker\pa{\nabla\Phi(z)^*} \cap \normal{\calM}{\pa{\Phi(z)}} = \ba{0}  .
\eeqn

\begin{theorem}[Composability]\label{thm:composability}
Given Euclidean spaces $\calX$ and $\calZ$, an open set $\calW\subset \calZ$ containing a point $\zbar$, a smooth map $\Phi: \calW \to \calX$, and a set $\calM \subset \calX$, suppose $\Phi$ is transversal $\calM$ at $\zbar$. If the set-valued operator $A: \calX \setvalued \calX$ is partly smooth at $\Phi(\zbar)$ relative to $\calM$, and that the composition $\nabla \Phi(\cdot)^\top A \Phi$ is partly smooth at $\zbar$ relative to $\Phi^{-1}(\calM)$.
\end{theorem}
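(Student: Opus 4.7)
The plan is to adapt Lewis's composition argument for partly smooth functions (\cite[Theorem~4.2]{LewisPartlySmooth}) to the set-valued setting. By the preimage theorem, the transversality of $\Phi$ to $\calM$ at $\zbar$ makes $\Phi^{-1}(\calM)$ a $C^p$-submanifold of $\calZ$ near $\zbar$, with the local identifications
\[
\tangent{\Phi^{-1}(\calM)}(z) = \pa{\nabla\Phi(z)}^{-1}\tangent{\calM}\pa{\Phi(z)}, \qquad \normal{\Phi^{-1}(\calM)}(z) = \nabla\Phi(z)^{\top}\normal{\calM}\pa{\Phi(z)},
\]
valid for every $z \in \Phi^{-1}(\calM)$ close to $\zbar$; transversality additionally forces $\nabla\Phi(z)^{\top}$ to be injective on $\normal{\calM}(\Phi(z))$ with a continuous inverse. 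Set $B(z) := \nabla\Phi(z)^{\top} A(\Phi(z))$; the task is then to verify each of the conditions of Definition \ref{dfn:ps-svo} for $B$ at $\zbar$ relative to $\Phi^{-1}(\calM)$.

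For regularity \iref{ps:regularity} and upper semi-continuity, the values $A(\Phi(z))$ are nonempty closed convex by hypothesis, and the local boundedness that accompanies upper semi-continuity in the partial smoothness framework makes each $A(\Phi(z))$ compact convex; its image under the continuous linear map $\nabla\Phi(z)^{\top}$ inherits nonemptiness, compactness and convexity, and $B$ inherits upper semi-continuity by composition with the smooth maps $\Phi$ and $\nabla\Phi$. For the first half of sharpness \iref{ps:sharpness}, apply the sharpness of $A$ to obtain $\Lin(A(\Phi(\zbar))) = \normal{\calM}(\Phi(\zbar))$; since $\Lin(\cdot)$ commutes with linear maps on nonempty convex sets,
\[
\Lin\pa{B(\zbar)} = \nabla\Phi(\zbar)^{\top}\Lin\pa{A(\Phi(\zbar))} = \nabla\Phi(\zbar)^{\top}\normal{\calM}\pa{\Phi(\zbar)} = \normal{\Phi^{-1}(\calM)}(\zbar),
\]
as desired. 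Continuity \iref{ps:continuity} of $B$ along $\Phi^{-1}(\calM)$ then follows by composing continuity of $A$ along $\calM$ with the smoothness of $z \mapsto \Phi(z)$ and of $z \mapsto \nabla\Phi(z)^{\top}$.

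The main obstacle is the second half of \iref{ps:sharpness}, namely continuity of $z \mapsto \proj_{\tangent{\Phi^{-1}(\calM)}(z)}(B(z))$ along $\Phi^{-1}(\calM)$ near $\zbar$. Denoting the smooth representative of $A$ by $\eta(x) := \proj_{\tangent{\calM}(x)}(A(x))$, continuous along $\calM$ by assumption, the plan is to show that $z \mapsto \nabla\Phi(z)^{\top}\eta(\Phi(z))$ supplies a smooth representative of $B$, up to a correction term lying in $\normal{\Phi^{-1}(\calM)}(z)$ that varies continuously in $z$. This reduces to checking that $\nabla\Phi(z)^{\top}$ intertwines the tangent-projection onto $\tangent{\calM}(\Phi(z))$ with that onto $\tangent{\Phi^{-1}(\calM)}(z)$, modulo a controllable normal component; the transversality hypothesis, through the continuous inverse of $\nabla\Phi(z)^{\top}$ on the varying subspace $\normal{\calM}(\Phi(z))$, is precisely what is needed to perform this splitting uniformly in $z$ and close the argument.
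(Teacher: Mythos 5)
Your proposal is correct and follows essentially the same route as the paper's proof (itself modeled on Lewis's composition argument): transversality gives the manifold structure and the normal-space formula $\normal{\Phi^{-1}(\calM)}(z)=\nabla\Phi(z)^{\top}\normal{\calM}(\Phi(z))$, regularity and sharpness pass through the linear map $\nabla\Phi(z)^{\top}$, and continuity along $\Phi^{-1}(\calM)$ follows by composing the continuity of $A$ along $\calM$ with the smoothness of $\Phi$. The step you flag as the main obstacle — that $\nabla\Phi(\cdot)^{\top}\eta(\Phi(\cdot))$ yields a smooth representative of the composition — is left at the same level of detail in the paper (which simply asserts ``it can be verified''), so your sketch is if anything slightly more explicit about what remains to be checked.
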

\begin{proof}%[Proof of Theorem \ref{thm:composability}]
Due to transversality, the set $\Phi^{-1}(\calM)$ is a manifold around any point $z \in \Phi^{-1}(\calM)$ close to $\zbar$, with normal space reads
\beqn
\normal{\Phi^{-1}(\calM)}(z) = \nabla\Phi(z)^\top \normal{\calM}\pa{\Phi(z)}  ,
\eeqn
and transversality also holds at all such $z$.

\begin{enumerate}[label={\rm (\roman{*})}]
\item
Given a smooth representative $\tilde{A}$ of $A|_{\calM}$ around $\Phi(\zbar)$, it can be verified that $\Phi(\cdot)^\top \tilde{A} \Phi$ is a smooth representative of $\pa{\nabla \Phi(\cdot)^\top A\Phi}|_{\Phi^{-1}(\calM)}$ around $\zbar$, so this latter operator is continuous around $\zbar$. 
\item
By the regularity assumption of $A$ in Definition \ref{dfn:ps-svo}, 
$\Phi(\cdot)^\top A \Phi$ is regular, hence
\beq\label{eq:noempty}
\nabla\Phi(z)^\top \pa{A\Phi(z)} \neq \emptyset  .
\eeq
\item
For the normal space, we have % the normal space is parallel to the $A(\cdot)$, since
\beqn
\begin{aligned}
\Lin\Pa{(\nabla \Phi(\cdot)^\top A\Phi)(\zbar)}
&= \Lin\Pa{\nabla\Phi(\zbar)^\top {A\Phi(\zbar)}}  \\
&= \nabla\Phi(\zbar)^\top \Lin\pa{{A\Phi(\zbar)}}  \\
&\supset \nabla\Phi(\zbar)^\top \normal{\calM}\pa{\Phi(\zbar)} = \normal{\Phi^{-1}(\calM)}(z)  .
\end{aligned}
\eeqn
\item
Consider a convergent sequence of points $\zk \to \zbar$ in $\Phi^{-1}(\calM)$, and a vector $w \in \pa{\nabla \Phi(\cdot)^\top A\Phi}(\zbar)$. By \eqref{eq:noempty} there is a vector $y \in {A\Phi(\zbar)}$ such that $\nabla\Phi(\zbar)^\top y = w$. Since $\Phi(\zk) \to \Phi(\zbar)$ in $\calM$ and $A$ is continuous on $\calM$ near $\Phi(\zbar)$, there must be vectors $\yk \in {A\Phi(\zk)}$ approaching $y$. But $\Phi$ is smooth, so the vector $\nabla\Phi(\zk)^\top \yk \in \pa{\nabla \Phi(\cdot)^\top A\Phi}(\zk)$ approaches $w$ as well. \hfill $\square$
\end{enumerate}
\end{proof}

\begin{theorem}[Separability]\label{thm:separability}
For each $j=1,2,...,m$, suppose $\calH_j$ is a real Euclidean space, that the set $\calM_j \subset \calH_j$ contains the point $\xbar_j$, and the set-valued operator $A_j : \calH_j \setvalued {\calH_j}$ is partly smooth at $\xbar_j$ relative to $\calM_j$. Then the set-valued operator $\bmA:\calH_1 \times \calH_2 \times \dotsm \times \calH_m \setvalued {\calH_1} \times {\calH_2} \times \dotsm \times {\calH_m}$ defined by
\beqn
\bmA(x_1,x_2,...,x_m) = \pa{v_1,v_2,...,v_m},~~ x_j\in\calH_j,~ v_j \in A_j(x_j),~ j=1,2,...,m  ,
\eeqn
is partly smooth at $(\xbar_1,\xbar_2,...,\xbar_m)$ relative to $\calM_1\times\calM_2\times\dotsm\calM_m$.
\end{theorem}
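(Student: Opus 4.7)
\medskip
\noindent\textbf{Proof proposal.} The plan is to verify the four requirements of Definition~\ref{dfn:ps-svo} for $\bmA$ relative to $\calM \eqdef \calM_1\times\calM_2\times\cdots\times\calM_m$ at $\xbar \eqdef (\xbar_1,\dots,\xbar_m)$, using the fact that each geometric/analytic object involved decomposes as a Cartesian product. The global manifold structure is immediate: since each $\calM_j$ is a $C^p$-manifold near $\xbar_j$, the product is a $C^p$-manifold near $\xbar$, with tangent and normal spaces
\[
\tangent{\calM}(x) = \tangent{\calM_1}(x_1)\times\cdots\times\tangent{\calM_m}(x_m),\quad
\normal{\calM}(x) = \normal{\calM_1}(x_1)\times\cdots\times\normal{\calM_m}(x_m),
\]
for $x=(x_1,\dots,x_m)$ close to $\xbar$ on $\calM$. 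I would quote this as standard and use it repeatedly.

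Next I would dispatch regularity and continuity. By construction $\bmA(x)=A_1(x_1)\times\cdots\times A_m(x_m)$, and since a Cartesian product of nonempty/closed/convex sets is nonempty/closed/convex, condition \iref{ps:regularity} follows from the corresponding property of each $A_j$ near $\xbar_j$. Upper semi-continuity of $\bmA$ on $\calH_1\times\cdots\times\calH_m$ and continuity of $\bmA$ along $\calM$ near $\xbar$ (condition \iref{ps:continuity}) follow from the same property of each component: a sequence $x^k \to \xbar$ in $\calM$ means $x_j^k \to \xbar_j$ in $\calM_j$ for each $j$, and any $v^k=(v_1^k,\dots,v_m^k)\in\bmA(x^k)$ is controlled coordinatewise, so both the lower and upper semi-continuous convergences carry over componentwise. (For the neighborhood formulation of upper semi-continuity, I would use that any neighborhood of the product $\bmA(\xbar)$ contains a product of neighborhoods of the $A_j(\xbar_j)$, which is where I expect the only small technicality to lie.)

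The main step is sharpness \iref{ps:sharpness}. For the linear span identity I would compute
\[
\Lin(\bmA(\xbar)) = \Lin\bigl(A_1(\xbar_1)\times\cdots\times A_m(\xbar_m)\bigr) = \Lin(A_1(\xbar_1))\times\cdots\times\Lin(A_m(\xbar_m)),
\]
which equals $\normal{\calM_1}(\xbar_1)\times\cdots\times\normal{\calM_m}(\xbar_m)=\normal{\calM}(\xbar)$ by applying the sharpness of each $A_j$. For the projection part, the orthogonal projector onto a Cartesian product of subspaces is the product of the individual projectors, so
\[
\proj_{\tangent{\calM}(x)}\bigl(\bmA(x)\bigr) = \proj_{\tangent{\calM_1}(x_1)}\bigl(A_1(x_1)\bigr)\times\cdots\times\proj_{\tangent{\calM_m}(x_m)}\bigl(A_m(x_m)\bigr),
\]
and continuity of a product of set-valued maps at $\xbar$ along $\calM$ reduces to continuity of each factor along $\calM_j$ at $\xbar_j$, which is given.

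The only genuine obstacle I anticipate is a careful handling of the Lin/projection/normal-cone decompositions over products, so that no step is smuggled in without justification; the identification of $\Lin$ of a product of convex sets with the product of $\Lin$'s, and of the projector onto a product subspace with the product projector, are both elementary but must be stated explicitly. Everything else is a coordinatewise reduction to the hypothesis that each $A_j$ is partly smooth at $\xbar_j$ along $\calM_j$. \hfill$\square$
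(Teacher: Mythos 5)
Your proposal is correct and follows essentially the same route as the paper: the paper's (much terser) proof rests on exactly the same product decompositions of the manifold, its normal cone, and the operator $\bmA(x)=A_1(x_1)\times\cdots\times A_m(x_m)$, with regularity inherited componentwise. Your version merely spells out the coordinatewise verification of each condition of Definition~\ref{dfn:ps-svo} that the paper leaves implicit.
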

\begin{proof}%[Proof of Theorem \ref{thm:separability}]
This easily follows from the fact that $\calM_1\times\calM_2\times\dotsm\times\calM_m$ is a manifold around $(\xbar_1, \xbar_2, \dotsm, \xbar_m)$, with 
\beqn
\begin{aligned}
\bmN_{\calM_1\times\calM_2\times\dotsm\calM_m}(x_1,x_2,...,x_m) 
&= \normal{\calM_1}(x_1) \times \normal{\calM_2}(x_2) \times \dotsm \times \normal{\calM_m}(x_m)  ,  \\
\bmA(x_1,x_2,...,x_m) &= A_1(x_1) \times A_2(x_2) \times \dotsm \times A_m(x_m)  ,
\end{aligned}
\eeqn
and $\bmA$ is regular providing that each $A_j$ is regular at $x_j,~j=1,2,...,m$ \cite[Proposition 10.5]{rockafellar1998variational}.
\end{proof}

\begin{theorem}[Sum rule]\label{thm:sumrule}
Consider set $\calM_1,\calM_2,...,\calM_m$ in a real Euclidean space $\calZ$. Suppose the set-valued operator $A_j: \calZ \setvalued \calZ$ is partly smooth at $\zbar$ relative to $\calM_j$ for each $j$. Assume further the condition
\beqn
\msum_{j=1}^{m} y_j = 0  ~~\mathrm{and}~~ y_j \in \normal{\calM_j}(\zbar) ~~\mathrm{for~each}~~ j ~~\Longrightarrow~~ y_j = 0 ~~\mathrm{for~each}~~ j.
\eeqn
Then the set-valued operator $\sum_j A_j$ is partly smooth at $\zbar$ relative $\cap_j \calM_j$.
\end{theorem}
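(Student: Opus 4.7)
The plan is to adapt the classical sum rule for partly smooth functions (in the spirit of \cite[Corollary~4.8]{LewisPartlySmooth}) to set-valued operators, using the structural Definition~\ref{dfn:ps-svo}. First I would establish that $\cap_j \calM_j$ is itself a $C^p$-smooth manifold near $\zbar$. The hypothesis that $\msum_j y_j = 0$ with $y_j \in \normal{\calM_j}(\zbar)$ forces every $y_j = 0$ is precisely the normal transversality of the family $\Ba{\calM_j}_{j=1}^m$ at $\zbar$. By the classical transverse intersection theorem from differential geometry, $\cap_j \calM_j$ is then a $C^p$-manifold around $\zbar$, and since normal spaces to smooth manifolds vary continuously, the transversality persists on a neighborhood, yielding the local formulas
\[
\tangent{\cap_j \calM_j}(z) = \cap_j \tangent{\calM_j}(z), \qquad \normal{\cap_j \calM_j}(z) = \msum_j \normal{\calM_j}(z),
\]
for every $z \in \cap_j \calM_j$ sufficiently close to $\zbar$.

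Next I would verify the three defining conditions of Definition~\ref{dfn:ps-svo} for $\msum_j A_j$ at $\zbar$ relative to $\cap_j \calM_j$. Regularity is immediate: each $A_j(z)$ is nonempty, closed and convex near $\zbar$, so the Minkowski sum $\msum_j A_j(z)$ is nonempty and convex, with closedness following locally from the transversality hypothesis, which rules out conflicting recession directions between the summands. For sharpness, since $\Lin(\cdot)$ is additive under Minkowski sum and each $A_j$ is sharp,
\[
\Lin\Pa{\msum_j A_j(\zbar)} = \msum_j \Lin\pa{A_j(\zbar)} = \msum_j \normal{\calM_j}(\zbar) = \normal{\cap_j \calM_j}(\zbar).
\]
For the continuous smooth representative, note that $\tangent{\cap_j \calM_j}(z) \subset \tangent{\calM_j}(z)$ implies $\normal{\calM_j}(z) \perp \tangent{\cap_j \calM_j}(z)$; orthogonal decomposition of any $v \in A_j(z)$ then yields $\proj_{\tangent{\cap_j \calM_j}(z)}\pa{A_j(z)} = \proj_{\tangent{\cap_j \calM_j}(z)}\pa{\tilde A_j(z)}$, where $\tilde A_j$ denotes the smooth representative of $A_j$. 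Summing over $j$ produces a continuous single-valued representative for $\msum_j A_j$.

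Finally, continuity of $\msum_j A_j$ along $\cap_j \calM_j$ near $\zbar$ is checked in two parts. Lower semi-continuity is direct: given $v = \msum_j v_j \in \msum_j A_j(\zbar)$ and any sequence $z_k \to \zbar$ in $\cap_j \calM_j \subset \calM_j$, lower semi-continuity of each $A_j$ along $\calM_j$ produces $v_{j,k} \in A_j(z_k)$ with $v_{j,k} \to v_j$, whence $\msum_j v_{j,k} \to v$. Upper semi-continuity follows from the upper semi-continuity of each $A_j$ together with closedness and convexity of its values, with local transversality controlling selection decompositions. The main obstacle is expected to be the transverse intersection formula $\normal{\cap_j \calM_j}(z) = \msum_j \normal{\calM_j}(z)$; once that geometric fact is in place the remaining verifications are largely bookkeeping, the subtlest point being closedness of the Minkowski sum $\msum_j A_j(z)$, which again rests on ruling out conflicting recession directions via the transversality hypothesis.
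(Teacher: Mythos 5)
Your proof is correct in outline, but it takes a genuinely different route from the paper's. The paper never verifies Definition~\ref{dfn:ps-svo} directly for $\msum_j A_j$: it introduces the diagonal map $\Phi(z)=(z,z,\dotsm,z)$ from $\calZ$ into the $m$-fold product $\calX$, observes that the hypothesis on the normals is exactly transversality of $\Phi$ to $\calM_1\times\dotsm\times\calM_m$ at $\zbar$ (since $\nabla\Phi(z)^{*}(y_1,\dots,y_m)=\msum_j y_j$), and then combines Theorem~\ref{thm:separability} (the product operator is partly smooth relative to the product manifold) with Theorem~\ref{thm:composability}, using $\Phi^{-1}(\calM_1\times\dotsm\times\calM_m)=\cap_j\calM_j$ and $\nabla\Phi(z)^{\top}\bmA\Phi(z)=\msum_j A_j(z)$. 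That reduction buys economy: the transverse-intersection formula for $\cap_j\calM_j$, the normal-space sum formula, and the continuity transfer are all absorbed into calculus rules already proved. Your direct verification buys transparency, at the cost of two details you currently leave as asides and should write out: (i) closedness of the Minkowski sum $\msum_j A_j(z)$ --- your recession-cone remark is the right idea, and it closes because $\mathrm{rec}\bigl(A_j(z)\bigr)\subset\Lin\bigl(A_j(z)\bigr)=\normal{\calM_j}(z)$ by local normal sharpness (Proposition~\ref{prop:lns}), so the normal transversality (which persists for $z$ near $\zbar$) forces any zero-sum family of recession directions to vanish, and the classical closedness criterion for sums of closed convex sets applies; (ii) upper semi-continuity of the sum, where you must show that selections $v_{j,k}\in A_j(z_k)$ with bounded sum are themselves bounded --- decompose $v_{j,k}=\tilde A_j(z_k)+n_{j,k}$ with $n_{j,k}\in\normal{\calM_j}(z_k)$ and use that, locally, the map $(n_1,\dots,n_m)\mapsto\msum_j n_j$ restricted to the product of the normal spaces is injective with uniformly bounded inverse. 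With those two points made explicit, your argument stands as a complete, self-contained alternative to the paper's proof.
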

\begin{proof}%[Proof of Theorem \ref{thm:sumrule}]
Define the following
\begin{align*}
\calX &= \calZ \times \calZ \times \dotsm \times \calZ ~~(m~\mathrm{copies}) ,  \\
\calW &= \calZ ,  \\
\Phi(z) &= (z,z,\dotsm,z) ~~\mathrm{for}~~ z\in Z ,  \\
\calM &= \calM_1 \times \calM_2 \times \dotsm \times \calM_m ,  \\
\bmA(z_1,z_2,...,z_m) &= \msum_{j} A_j z_j ~~\mathrm{for}~~ z_j \in Z,~ j=1,2,...,m.
\end{align*}
Then applying the Theorem \ref{thm:composability} and Proposition \ref{thm:separability} leads to the desired result.
\end{proof}

\begin{corollary}[Smooth perturbation]\label{coro:smooth-perturbation}
If the set-valued operator $A: \calX \setvalued \calX$ is partly smooth at the point $\xbar$ relative to the set $\calM \subset \calX$, and the operator $B: \calX \to \calX$ is smooth on an open set containing $\xbar$, then the set-valued operator $A+B$ is partly smooth at $\xbar$ relative to $\calM$.
\end{corollary}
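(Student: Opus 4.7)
The natural route is to deduce this from the Sum rule, Theorem \ref{thm:sumrule}. The plan is to view the smooth single-valued map $B$ as a degenerate partly smooth set-valued operator with the whole ambient space $\calX$ playing the role of its ``manifold'', and then apply the sum rule to the pair $(A,B)$ relative to $(\calM,\calX)$.

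The first step is to verify that $B$, regarded as a set-valued operator with singleton values, is partly smooth at $\xbar$ relative to $\calX$. The space $\calX$ is trivially a $C^p$-manifold at every point, with $\tangent{\calX}(x) = \calX$ and $\normal{\calX}(x) = \ba{0}$. Each clause of Definition \ref{dfn:ps-svo} is then essentially automatic from smoothness of $B$: the values $B(x)$ are nonempty closed convex singletons; sharpness $\Lin\pa{B(\xbar)} = \ba{0} = \normal{\calX}(\xbar)$ holds because the affine hull of a singleton has trivial parallel subspace, and $\proj_{\tangent{\calX}(x)}\pa{B(x)} = B(x)$ is continuous in $x$; upper and lower semi-continuity along $\calX$ are just ordinary continuity of $B$.

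With this in hand I would invoke Theorem \ref{thm:sumrule} with $A_1 = A$, $\calM_1 = \calM$ and $A_2 = B$, $\calM_2 = \calX$. Its normal-cone hypothesis becomes
\[
y_1 + y_2 = 0, \quad y_1 \in \normal{\calM}(\xbar), \quad y_2 \in \normal{\calX}(\xbar) = \ba{0},
\]
which forces $y_2 = 0$ and hence $y_1 = 0$; so the hypothesis holds automatically. The sum rule then delivers partial smoothness of $A+B$ relative to $\calM \cap \calX = \calM$, which is exactly the claim.

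The only delicate point I anticipate is confirming that the sharpness clause in Definition \ref{dfn:ps-svo} is permissive enough to admit this trivial ``manifold $=\calX$'' case for $B$; the paper's reading of $\Lin(\cdot)$ as the parallel subspace of the affine hull makes it work cleanly. A fully direct verification of Definition \ref{dfn:ps-svo} for $A+B$ is an equally short alternative: $(A+B)(x) = A(x) + B(x)$ inherits nonemptiness, closedness and convexity from $A$; $\Lin\pa{A(\xbar) + B(\xbar)} = \Lin\pa{A(\xbar)} = \normal{\calM}(\xbar)$ since translation by a single vector preserves the parallel subspace; and continuity along $\calM$, together with continuity of the tangential projection, transfer additively from the two summands.
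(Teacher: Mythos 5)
Your proposal is correct and follows exactly the route the paper intends: the statement is positioned as a corollary of the sum rule (Theorem \ref{thm:sumrule}), obtained by treating the smooth single-valued $B$ as partly smooth relative to the whole space $\calX$, where the normal-cone hypothesis of the sum rule holds trivially since $\normal{\calX}(\xbar)=\ba{0}$. Your verification that $B$ satisfies Definition \ref{dfn:ps-svo} relative to $\calX$, and the alternative direct check for $A+B$, are both sound.
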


%%%%%
\section{Identifiability}\label{sec:identifiability}

In this section, we present our main results, centering around the identifiability of the manifold -- the most valuable property of partial smoothness.
Let $A$ be partly smooth at $\xbar$ relative to $\calM$, then identifiablity implies that $\calM$ is acting as an ``attractor''. Under proper regularity and non-degenerate conditions on the dual vector, any sequence converging to the point $\xbar \in \calM$ will land on the manifold first and converges along the manifold. 
In terms of normal cone operators, identifiability is studied in \cite{drusvyatskiy2014optimality}.

The set-valued operators perspective allows us to derive a finer local geometric characterization around the point of interest, establish finite manifold identification under weaker conditions, remove non-degeneracy condition and estimate the number of steps needed for identification.

\subsection{Identifiability of partly smooth set-valued operators}

In the following we present our new identifiability result, which lays the foundation of our followup discussions.
We start with the local characterizations of partial smoothness. 
Recall the definitions of $\varepsilon$-localization (Definition \ref{def:localization}) and resolvent-regularity (Definition \ref{def:prox-reg}) for a set-valued operator.

\begin{proposition}\label{prop:local-union}
Let a set-valued operator $A:\Rn\setvalued\Rn$ be partly smooth at $\xbar$ relative to {$C^p$}-manifold $\calM \subset \Rn$. % containing \xbar\xbar for \vbar∈A(\xbar)\vbar\in A(\xbar). 
Suppose $A$ is {\it $r$-resolvent-regular} at $\xbar$ for $\ubar \in A(\xbar)$. 
For $\gamma\in]0,1/r[$ and sufficiently small $\varepsilon >0$, define the local union as
\beq\label{eq:union-U}
\localU 
\eqdef {\mathsmaller \bigcup}_{x\in \calM\cap \ball{\varepsilon}{\xbar}} \Pa{x+ \gamma \Ae(x)}  ,
\eeq 
where $\Ae$ is the $\varepsilon$-localization of $A$ arond $(\xbar,\vbar)$. Then we have
\begin{enumerate}[label={\rm (\roman*)}, ref={\rm (\roman*)}]
    \item $\Span{(\localU)}=\Rn$;
    \item $\rslt_{\gamma \Ae}(\localU)=\cM\cap \ball{\varepsilon}{\xbar}$.
\end{enumerate}
\end{proposition}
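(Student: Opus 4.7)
The plan is to handle the two claims separately, using resolvent-regularity for (ii) and a tangent--normal decomposition for (i). Throughout, let $P_T = \proj_{\tanSp{\calM}{\xbar}}$ so that $\Rn = \tanSp{\calM}{\xbar} \oplus \normal{\calM}(\xbar)$.

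Part (ii) follows cleanly from single-valuedness. By Definition \ref{def:prox-reg}, for $\gamma \in ]0,1/r[$ the resolvent $\rslt_{\gamma\Ae}$ is single-valued and continuous on a neighborhood of $\xbar+\gamma\ubar$. Any $z \in \localU$ can be written as $z = x+\gamma u$ with $x \in \calM \cap \ball{\varepsilon}{\xbar}$ and $u \in \Ae(x)$, so $x \in \rslt_{\gamma\Ae}(z)$ and single-valuedness forces $\rslt_{\gamma\Ae}(z) = x$; hence $\rslt_{\gamma\Ae}(\localU) \subseteq \calM\cap\ball{\varepsilon}{\xbar}$. For the reverse inclusion, fix $x \in \calM \cap \ball{\varepsilon}{\xbar}$: the assumption $\ubar \in A(\xbar)$ together with lower semi-continuity of $A$ along $\calM$ (part of \iref{ps:continuity}) provides $u \in A(x)$ with $\|u-\ubar\| < \varepsilon$, so $u \in \Ae(x)$, $z := x+\gamma u \in \localU$, and $\rslt_{\gamma\Ae}(z) = x$.

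For (i), the goal is to show $\Span(\localU)$ contains both $\normal{\calM}(\xbar)$ and $\tanSp{\calM}{\xbar}$. The normal factor comes from the fiber $\xbar + \gamma\Ae(\xbar) \subseteq \localU$: convexity of $A(\xbar)$ together with $\ubar \in A(\xbar)$ gives $\Aff(\Ae(\xbar)) = \Aff(A(\xbar) \cap \ball{\varepsilon}{\ubar}) = \Aff(A(\xbar))$, and sharpness \iref{ps:sharpness} then yields $\gamma\Lin(\Ae(\xbar)) = \normal{\calM}(\xbar)$, so differences of elements of $\xbar+\gamma\Ae(\xbar)$ put $\normal{\calM}(\xbar) \subseteq \Span(\localU)$. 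For the tangent factor, the key ingredient is Proposition \ref{prop:lns}: for $x \in \calM$ near $\xbar$ and any $u \in A(x)$, the projection $\proj_{\tangent{\calM}(x)}(u)$ equals the single point $\tilde A(x)$, so $u - \tilde A(x) \in \normal{\calM}(x)$. Since $\normal{\calM}(x)$ tilts from $\normal{\calM}(\xbar)$ at rate $O(\|x-\xbar\|)$ along the $C^p$-manifold, this yields $P_T z = P_T x + \gamma P_T \tilde A(x) + O(\|x-\xbar\|)$ uniformly in $u \in \Ae(x)$. The map $G(x) := P_T(x + \gamma \tilde A(x))$, defined on $\calM \cap \ball{\varepsilon}{\xbar}$, is a continuous perturbation of the local diffeomorphism $x \mapsto P_T x$, so by invariance of domain its image covers a neighborhood of $G(\xbar) = P_T(\xbar+\gamma\ubar)$ in $\tanSp{\calM}{\xbar}$; after absorbing the $O(\varepsilon)$ residual, $P_T(\localU)$ still contains a nonempty open subset of $\tanSp{\calM}{\xbar}$. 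This gives $\Span P_T(\localU) = \tanSp{\calM}{\xbar}$, and combined with the normal containment yields $\Span(\localU) = \Rn$.

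The principal obstacle is establishing local injectivity of $G$ under the mere continuity of $\tilde A$, so that invariance of domain applies. The plan is to exploit the resolvent-regularity hypothesis: the hypomonotonicity encoded in ``$r\Id+\Ae$ monotone'' constrains how nearby fibers $x + \gamma\Ae(x)$ can stack, which combined with $\gamma < 1/r$ forces $G$ to be locally injective on $\calM \cap \ball{\varepsilon}{\xbar}$. The ``sufficiently small $\varepsilon$'' clause in the statement is precisely what simultaneously absorbs this injectivity window and the $O(\varepsilon)$ residual wiggles coming from the choice of $u$ inside each fiber.
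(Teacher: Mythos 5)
Your part (ii) is correct and is essentially the paper's argument: resolvent-regularity makes $\rslt_{\gamma \Ae}$ single-valued so that $\rslt_{\gamma \Ae}(x+\gamma u)=x$, and continuity of $A$ along $\calM$ supplies, for each $x\in\calM\cap\ball{\varepsilon}{\xbar}$, some $u\in\Ae(x)$, giving surjectivity. Your normal-space step in (i) is also sound: convexity gives $\Aff(\Ae(\xbar))=\Aff(A(\xbar))$, and sharpness then places $\normal{\calM}(\xbar)$ inside $\Span(\localU)$. Where you depart from the paper is the tangential half of (i): the paper argues by orthogonality, using Proposition \ref{prop:lns} to replace the span of each fiber by $\normal{\calM}(x)$ and reducing the claim to showing that $\bigcup_{x}\pa{x+\normal{\calM}(x)}$ spans $\Rn$, whereas you try to show that $\proj_{\tanSp{\calM}{\xbar}}(\localU)$ covers an open subset of $\tanSp{\calM}{\xbar}$ via invariance of domain.

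That covering step is where the genuine gap sits, and you flag it yourself: invariance of domain needs $G(x)=P_T(x+\gamma\tilde A(x))$ to be locally injective, and your ``plan'' to extract injectivity from hypomonotonicity is left unexecuted and does not go through as described. Monotonicity of $r\Id+\Ae$ constrains pairs in $\gra(\Ae)$, but $\tilde A(x)=\proj_{\tangent{\calM}(x)}\pa{A(x)}$ is in general \emph{not} an element of $A(x)$ --- it is the common tangential projection of all of them --- so strong monotonicity of $\Id+\gamma\Ae$ does not transfer to $x\mapsto x+\gamma\tilde A(x)$, and post-composing with $P_T$ can destroy injectivity in any case. One could drop injectivity by replacing invariance of domain with a Brouwer/degree argument for $\Id+g$, but that requires the oscillation $\sup_{x}\|\tilde A(x)-\tilde A(\xbar)\|$ over $\calM\cap\ball{\varepsilon}{\xbar}$ to be dominated by $\varepsilon$, which mere continuity of $\tilde A$ does not give; some quantitative input from resolvent-regularity is genuinely required here, and neither your sketch nor the ``plan'' supplies it. (Your instinct that resolvent-regularity must enter part (i) is correct: on a line in $\RR^2$ one can build a partly smooth $A$ with $\tilde A(x)=-x/\gamma$ tangentially, for which $\localU$ collapses onto a single normal fiber; it is precisely the failure of $r\Id+\Ae$ to be monotone with $r<1/\gamma$ that rules this out.) Finally, ``absorbing the $O(\varepsilon)$ residual'' is not a proof step: the selection $x\mapsto x+\gamma u_x$ need not be continuous, so openness of the image of $G$ does not pass to $\proj_{\tanSp{\calM}{\xbar}}(\localU)$. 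The clean repair is to observe that any $w\in\tanSp{\calM}{\xbar}$ orthogonal to $\localU$ is automatically orthogonal to every $\normal{\calM}(x)$ and hence to the image of $G$ itself, which brings you back to the orthogonality route the paper follows.
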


As the linear span of $\localU$ is the whole space, we call it has ``full dimension'', this is the key of showing identification. The second claim above means that locally the resolvent of $\Ae$ is well-defined. 

\begin{remark}
The key to understand Proposition \ref{prop:local-union} is the local normal sharpness (Proposition \ref{prop:lns}). When $\calM$ is polyhedral around $\xbar$, then 
\[
\localU = \Pa{ \cM\cap\ball{\varepsilon}{\xbar} } \oplus \Pa{  {\mathsmaller \bigcup}_{x\in \calM \cap \ball{\varepsilon}{\xbar}}  \gamma \Ae(x) } .
\]
Note that, as $\Ae(x) \subseteq A(x)$, 
\[
\calU_{\gamma} 
\eqdef {\mathsmaller \bigcup}_{x\in \calM\cap \ball{\varepsilon}{\xbar}} \Pa{x+ \gamma A(x)} 
\]
also has full dimension. 
\end{remark}

\begin{proof}
The key of the proof relies on the local normal sharpness. 
\begin{enumerate}[label=(\roman*), ref=(\roman*)]
    \item With $\varepsilon > 0$, define the following union of normal cone
\[
\calF \eqdef {\mathsmaller \bigcup}_{x\in \calM\cap \ball{\varepsilon}{\xbar}} \pa{x+\normal{\calM}(x)}  .
\]
Then we have $\Span{(\calF)}=\Rn$. To see this, if $\calM$ is affine or linear, the claim is quite straightforward. 

Next we show this is true for general $\calM$ by contradiction. 
Assume that it is not true, then there exists a non-zero vector $y \in \Rn \setminus \LinHull(\calF)$ such that
\beq\label{eq:iprod-y-x-v}
\iprod{y}{x + u} = 0 ,~~ \forall x \in \calM\cap \ball{\varepsilon}{\xbar},~  \forall u \in \normal{\calM}(x)  .
\eeq
Given any point $x \in \calM\cap \ball{\varepsilon}{\xbar}$ and a non-zero vector $v \in \tangent{\calM}(x)$, we have 
\[
\iprod{v}{x + u} = 0 ,~~   \forall u \in \normal{\calM}(x) .
\]
As a result, \eqref{eq:iprod-y-x-v} implies that 
\[
y \in {\mathsmaller \bigcap}_{x\in \calM\cap \ball{\varepsilon}{\xbar}} \tangent{\calM}(x) ,
\]
which apparently is not true since $y$ is non-zero and $\calM$ is curved, hence $\calF$ is of full dimension.

{\hgap}Now we prove the local union $\localU$ is of full dimension.
As above, suppose it is not true, then there exists a non-zero vector $w\in \Rn \setminus \LinHull(\localU)$ such that 
\beqn
\iprod{w}{x+\gamma u} = 0 ,~~ \forall x \in \calM\cap \ball{\varepsilon}{\xbar},~  \forall u \in \Ae(x)  ,
\eeqn
which means $w \perp \pa{x + \LinHull( \Ae(x))}$ holds for any $x \in \calM\cap \ball{\varepsilon}{\xbar}$. 
By virtue of the local normal sharpness (Proposition \ref{prop:lns}), this further implies
\beqn
w \perp \pa{x+\normal{\calM}(x)}  \Longrightarrow  w \perp \cup_{x\in \calM\cap \ball{\varepsilon}{\xbar}} \pa{x+\normal{\calM}(x)}  .
\eeqn
Since $\bigcup_{x\in \calM\cap \ball{\varepsilon}{\xbar}} \pa{x+\normal{\calM}(x)}$ is of full dimension, therefore we have $w = 0$, which contradicts with the non-zero assumption, hence proves the claim.

\item Owing to the regularity of $A$, for sufficiently small $\gamma$ and $\varepsilon$, we have
\beq\label{eq:unique2}
\rslt_{\gamma \Ae}(x+\gamma u)=x
\eeq
for $\forall x\in \cM\cap\ball{\varepsilon}{\xbar}$ and $u\in A(x)\cap\ball{\varepsilon}{\ubar}$. In addition to the continuity of $A$ relative to $\cM$, we can simply take $u\in A(x)$ for small enough $\varepsilon$. From the definition of $\localU$, \eqref{eq:unique2} leads to $J_{\gamma \Ae}(\localU)=\cM\cap \ball{\varepsilon}{\xbar}$. \hfill $\square$

\end{enumerate}
\end{proof}

We next establish the identifiability of partly smooth operators, the result leverages the properties of the local union, as outlined in Proposition~\ref{prop:local-union}. 
Let $\epsilon>0$ be small enough and denote $\scrB_{\epsilon} \subset \Rn$ be a ball at the origin with radius equals $\epsilon$, then define the following erosion of $\localU$
\[
\erosionU \eqdef \localU \ominus \scrB_{\epsilon}  .
\]
Note that $\erosionU$ is convex and also has full dimension when $\epsilon$ is small enough.

\begin{theorem}[Identifiability]\label{thm:ps-ident}
Let $A:\Rn\setvalued\Rn$ be a set-valued operator, $\calM \subset \Rn$ a {$C^p$}-manifold and $\xbar\in\calM$. Suppose the following conditions hold
\begin{enumerate}[label={\rm ({\bf A.\arabic{*}})},ref={\rm {\bf A.\arabic{*}}}, leftmargin=3.5em]
\item \label{cnd:ps}
$A$ is partly smooth at $\xbar$ relative to $\calM$;

\item \label{cnd:resol-reg}
$A$ is $r$-resolvent-regular at $\xbar$ for $\ubar \in A(\xbar)$;

\item \label{cnd:str-inc}
The strong inclusion $\ubar\in \ri\Pa{A(\xbar)}$ holds.
\end{enumerate}
Then  
\begin{enumerate}[label={\rm (\roman*)}, ref={\rm (\roman*)}]
\item Define the local union $\localU$ as in \eqref{eq:union-U} and denote $\zbar = \xbar+\gamma \ubar$, there holds
\beq\label{eq:zbar-in-localU}
\zbar \in {\rm int} \Pa{\localU}  .
\eeq

\item For any sequence $\seq{(\xk,\uk)}$ with $\uk\in\Ae(\xk)$, if 
\beq\label{eq:zk-in-erosionU}
\xk + \gamma \uk \in \erosionU ,
\eeq
then $\xk \in \calM$, and $\calM$ is called the ``active manifold'' for $\xbar$.
\end{enumerate}
\end{theorem}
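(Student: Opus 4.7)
My approach treats the two assertions separately. Part (i), the substantive claim, rests on a continuous injective parametrization of a portion of $\localU$ around $\zbar$ followed by Brouwer's invariance-of-domain theorem; part (ii) then follows as a direct consequence of Proposition \ref{prop:local-union}(ii) and single-valuedness of the resolvent. The geometric intuition behind (i) is that the strong inclusion $\ubar \in \ri\Pa{A(\xbar)}$, combined with the sharpness $\Lin\pa{A(\xbar)} = \normal{\calM}(\xbar)$, supplies an open disk of admissible perturbations of $\ubar$ inside $A(\xbar)$ along the normal direction, giving ``normal wiggle room'' for $\zbar = \xbar + \gamma\ubar$; varying $x$ along $\calM$ near $\xbar$ supplies complementary ``tangential wiggle room''. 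Since $\tangent{\calM}(\xbar)$ and $\normal{\calM}(\xbar)$ are complementary in $\Rn$, these two kinds of perturbations sweep out a full-dimensional neighborhood of $\zbar$ inside $\localU$.

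\textbf{Parametrization and openness in (i).} First I would extract $\delta > 0$ such that $\ubar + n \in A(\xbar)$ for every $n \in \normal{\calM}(\xbar)$ with $\norm{n} \leq \delta$. Let $\phi$ be a local $C^p$-parametrization of $\calM$ with $\phi(0) = \xbar$, let $\tilde{A}$ be the smooth representative from Definition \ref{dfn:ps-svo}\iref{ps:sharpness}, and set $n_0 \eqdef \ubar - \tilde{A}(\xbar) \in \normal{\calM}(\xbar)$. Define
\[
\Phi(t, n) \eqdef \phi(t) + \gamma\bigl(\tilde{A}(\phi(t)) + P_{\normal{\calM}(\phi(t))}(n_0 + n)\bigr) , \quad (t,n) \in U_T \times U_N ,
\]
where $U_T \subseteq \tangent{\calM}(\xbar)$ and $U_N \subseteq \normal{\calM}(\xbar)$ are small open neighborhoods of $0$ and $P_{\normal{\calM}(x)}$ is orthogonal projection. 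Then $\Phi(0,0) = \xbar + \gamma\ubar = \zbar$ and $\Phi$ is continuous; its image lies in $\localU$ provided $\tilde{A}(x) + P_{\normal{\calM}(x)}(n_0 + n) \in \Ae(x)$, which in turn follows from non-degeneracy at $\xbar$ together with the continuity of $A$ along $\calM$ from Definition \ref{dfn:ps-svo}\iref{ps:continuity} and the smooth variation of the normal projector. Injectivity near $(0,0)$ is the pivotal step: if $\Phi(t_1, n_1) = \Phi(t_2, n_2)$, setting $x_i = \phi(t_i)$ and $v_i = \tilde{A}(x_i) + P_{\normal{\calM}(x_i)}(n_0+n_i)$, then $(x_i, v_i) \in \gra(\Ae)$ both map to the same point under $\Id + \gamma\Ae$; single-valuedness of $\rslt_{\gamma\Ae}$ from $r$-resolvent-regularity (valid for $\gamma \in {]0, 1/r[}$) forces $x_1 = x_2$, hence $t_1 = t_2$, and then $P_{\normal{\calM}(x_1)}(n_1 - n_2) = 0$. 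For $x_1$ close to $\xbar$ the transversality $\tangent{\calM}(x_1) + \normal{\calM}(\xbar) = \Rn$ persists, so $\normal{\calM}(\xbar) \cap \tangent{\calM}(x_1) = \{0\}$ and therefore $n_1 = n_2$. Identifying $U_T \times U_N$ with an open subset of $\tangent{\calM}(\xbar) \oplus \normal{\calM}(\xbar) \cong \Rn$, Brouwer's invariance-of-domain theorem gives that $\Phi(U_T \times U_N)$ is open, contains $\zbar$, and is contained in $\localU$, whence $\zbar \in \inte(\localU)$.

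\textbf{Part (ii) and main obstacle.} For part (ii), set $\zk \eqdef \xk + \gamma\uk$. The hypothesis $\zk \in \erosionU \subseteq \localU$ together with Proposition \ref{prop:local-union}(ii) gives $\rslt_{\gamma\Ae}(\zk) \in \calM \cap \ball{\varepsilon}{\xbar}$; meanwhile $\uk \in \Ae(\xk)$ makes $\xk$ a preimage of $\zk$ under $\Id + \gamma\Ae$, so single-valuedness of $\rslt_{\gamma\Ae}$ (again from $r$-resolvent-regularity) forces $\xk = \rslt_{\gamma\Ae}(\zk) \in \calM$. The principal obstacle throughout is verifying in part (i) that $\Phi$ actually maps into $\localU$: this needs a lower-semi-continuity-type robustness of the relative-interior condition $\tilde{A}(x) + n \in A(x)$ along $\calM$ inherited from $\ubar \in \ri A(\xbar)$. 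Moreover the openness conclusion cannot appeal to the inverse function theorem since $\tilde{A}$ is only continuous, so the topological invariance-of-domain argument (with injectivity supplied by resolvent-regularity) is essential rather than a matter of convenience.
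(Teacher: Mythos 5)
Your part (ii) is exactly the paper's argument: $\xk+\gamma\uk\in\erosionU\subset\localU$, Proposition \ref{prop:local-union}(ii) and single-valuedness of $\rslt_{\gamma\Ae}$ then force $\xk=\rslt_{\gamma\Ae}(\xk+\gamma\uk)\in\calM$. For part (i), however, you take a genuinely different route. The paper argues by contradiction: it first shows, via a separating-vector argument and upper semi-continuity of the normal cone, that any sequence $\calM\ni\xk\to\xbar$, $A(\xk)\ni\uk\to\ubar$ eventually satisfies $\uk\in\ri\pa{\Ae(\xk)}$; it then supposes $\zbar\in\bdy(\localU)$, picks boundary points $\zk\to\zbar$, recovers $\xk=\rslt_{\gamma\Ae}(\zk)$ from resolvent-regularity, and deduces $\uk\in\rbd\pa{\Ae(\xk)}$, a contradiction. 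You instead exhibit an explicit continuous injective map $\Phi$ from an open subset of $\tangent{\calM}(\xbar)\times\normal{\calM}(\xbar)\cong\Rn$ into $\localU$ with $\Phi(0,0)=\zbar$, using resolvent single-valuedness for injectivity and invariance of domain for openness. Your route is more constructive and, in one respect, more honest: the paper's implication ``$\zk\in\bdy(\localU)\Rightarrow\zk\in\rbd\pa{\xk+\gamma\Ae(\xk)}$'' tacitly uses exactly the ``tangential plus normal directions fill out $\Rn$'' fact that your invariance-of-domain step makes rigorous. Your injectivity and dimension-count arguments are sound.

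The one step you defer --- that $\Phi$ actually maps into $\localU$, i.e.\ that $\tilde{A}(x)+\proj_{\normal{\calM}(x)}(n_0+n)\in\Ae(x)$ uniformly for $x\in\calM$ near $\xbar$ and $\norm{n}$ small --- is the crux, and as written your proof is incomplete there. It is a \emph{uniform} strengthening of the sequential lemma the paper proves first ($\uk\in\ri(A(\xk))$ for $k$ large), and it does hold: if it failed, you could take $\xk\to\xbar$ in $\calM$ and $w^{(k)}\in\Aff(A(\xk))$ with $w^{(k)}\to\ubar$ but $w^{(k)}\notin A(\xk)$, project $w^{(k)}$ onto the closed convex set $A(\xk)$ to get relative-boundary points $u^{(k)}\to\ubar$ (lower semi-continuity of $A$ along $\calM$ gives $\dist(w^{(k)},A(\xk))\to0$) together with unit separating vectors in $\Lin(A(\xk))=\normal{\calM}(\xk)$, and pass to the limit exactly as in the paper's separation argument to contradict $\ubar\in\ri\pa{A(\xbar)}$; local constancy of $\dim\normal{\calM}(x)$ along the $C^p$-manifold is what prevents the affine hulls from collapsing. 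So the gap is fillable by the very technique the paper already deploys, but it must be spelled out, since it is precisely where condition \iref{cnd:str-inc} and the continuity half of partial smoothness enter your proof.
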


\begin{remark}
Condition \iref{cnd:str-inc} is also referred to as ``non-degenerate condition'', this condition is the key to obtain the first claim. As we shall see in Proposition~\ref{prop:iden-degenerate}, this characterization also allows us to remove the strong inclusion condition, whose basic idea is that when $\ubar\in \rbd\Pa{A(\xbar)}$, we seek a (minimal) enlarged manifold such that \eqref{eq:zbar-in-localU} still holds. 
\end{remark}

\begin{remark}\label{rmk:identification} 
Condition \eqref{eq:zk-in-erosionU} indicates that 
\[
\xk + \gamma \uk \in {\rm int} \Pa{\localU} ,
\]
and the property of $\localU$ leads to the identification of $\calM$. 
\begin{itemize}
    \item Claim (ii) does not require the convergence of the sequence of either $\xk$ or $\uk$, it only requires {\it close enough}, which allows to establish identification result with errors; See Section \ref{sec:regression} for example of online sparse regression and proximal stochastic gradient descent method. 

    \item Given the sequence $(\xk,\uk)$, if for all $k$ large enough \eqref{eq:zk-in-erosionU} holds, then $\xk\in\calM$ also holds for all $k$ large enough, and this is ``finite (activity) identification'' as in \cite{hare2004identifying}. Below we also have an alternative characterization for finite identification.     
        \begin{itemize}
            \item Denote $d$ as the distance of $\zbar$ to the boundary of $\localU$, that is
                \beq\label{eq:dist}
                d = \dist\Pa{ \zbar, \bdy(\localU) } . 
                \eeq 
                Then $\ball{d}{\xbar+\gamma\ubar} \subset \localU $. 

            \item Given sequence $(\xk,\uk)$ with $\uk\in\Ae(\xk)$ such that 
                \beq\label{eq:bound}
                \lim\sup_k \dist\pa{ \xk+\gamma \uk,~ \zbar } < d. 
                \eeq
                This means that 
                \beq\label{eq:inclusion}
                \xk+\gamma\uk 
                \in \inte\Pa{\ball{d}{\xbar+\gamma\ubar}} \subset  \localU, % \bigcup_{x\in \calM\cap \ball{\varepsilon}{\xbar}} \pa{x+\gamma A(x)} ,
                \eeq
                and $\xk\in\calM$ follows. 
        \end{itemize}

    Note that condition \eqref{eq:bound} is stronger than \eqref{eq:zk-in-erosionU} is the sense that \eqref{eq:bound} implies \eqref{eq:zk-in-erosionU}, but not vice versa. 
\end{itemize}
\end{remark}

\begin{proof}
We first provide a convergence property of the dual vector, which is inspired by \cite{vaiterpartlysmooth}. For the sake of simplicity, we abuse the notation for sequence and subsequence. 

Let the sequence $\seq{\xk}$ such that 
$$
\calM\cap\ball{\varepsilon}{\xbar} \ni \xk \to \xbar
\qandq
A(\xk) \ni \uk \to \ubar .
$$
Then under condition of strong inclusion \eqref{cnd:str-inc}, there holds
\beq\label{eq:vk-in-int}
\uk \in \ri\Pa{A(\xk)}  ,
\eeq
for all $k$ large enough. We prove this calim by contradiction. 

Suppose the result is not true. This means that there at least exists a subsequence of $\uk$, such that
\beq\label{eq:vk-in-rbd}
\rbd\Pa{A(\xk)} \ni \uk \to \ubar  .
\eeq
Given any $\tilde{u}^{(k)} \in A(\xk)$, owing to the sharpness condition we have $\tilde{u}^{(k)} - \uk \in \sN_{\calM}(\xk)$. 
This means that there exists a unit normal vector $\vk \in \sN_{\calM}(\xk)$ such that
\beq\label{eq:exist-u}
\iprod{\vk}{\tilde{u}^{(k)}-\uk} \geq 0  ,~~ \forall \tilde{u}^{(k)} \in A(\xk) .
\eeq
Moreover, as $\vk$ has unit length, we can assume that, up to a subsequence, $\vk$ approaches a unit cluster point which is denoted as $\vbar$. It holds that $\vbar\in\sN_{\calM}(\xk)$ since the normal cone operator is upper semi-continuous. So does $\ubar \in \sN_{\calM}(\xbar)$.

In terms of $\tilde{u}^{(k)}$, choose it such that $\tilde{u}^{(k)} \to \tilde{u}$, then $\tilde{u} \in A(\xbar)$. 
As a result, taking \eqref{eq:exist-u} up to limit we get
\beq\label{eq:forall-v}
\iprod{\vbar}{\tilde{u}-\ubar}  \geq 0  ,
\eeq
which holds true for any $\tilde{u} \in A(\xbar)$. However, this contradicts with the fact $\ubar \in \ri\pa{A(\xbar)}$, hence \eqref{eq:vk-in-int} holds. Now we turn to prove the claims of the theorem. 
\begin{enumerate}[label={\rm (\roman{*})}, ref = {\rm \roman{*}}]

\item
{\bf Interior inclusion.} 
Under conditions \eqref{cnd:ps} and \eqref{cnd:resol-reg}, for the local union $\localU$ defined in \eqref{eq:union-U}, we know $\Span{(\localU)}=\Rn$ according to Proposition \ref{prop:local-union}~(i). Next we show that
\beqn
\zbar \in \inte\Pa{\localU} 
\eeqn
by contradiction.

{\hgap}Let $\seq{\xk}$ be a sequence converging to $\xbar$, and $A(\xk) \ni \uk \to \ubar$. Denote $\zk = \xk + \gamma \uk$, then we have $\zk\to\zbar$. 
Suppose the first claim is not true, meaning that $\zbar \in \bdy(\localU)$. 
We can choose $\xk$ and $\uk$ such that 
$$
\bdy(\localU) \ni \zk \to \zbar .
$$ 
Since $A$ is resolvent-regular at $\xbar$ for $\ubar$, when $\zk$ is close enough to $\zbar$, we have
$$
\norm{\xk-\xbar} \leq \varepsilon
\qandq
\norm{\uk-\ubar} \leq \varepsilon ,
$$
which means $\uk \in \Ae (\xk)$ belongs to the $\varepsilon$-localization of $A$. Owing to \eqref{eq:vk-in-int}, we can further choose $k$ large enough, such that
\beq\label{eq:uk-in-ri-Ae-xk}
\uk \in \ri\Pa{\Ae(\xk)}   . 
\eeq
Now since $A$ is $r$-resolvent-regular at $\xbar$ for $\vbar$, and $\gamma \in ]0, 1/r[$, we have $J_{\gamma A}(\zk)$ is well-defined, and $\xk = J_{\gamma A}(\zk) \to \xbar$ holds. 
As $\zk$ is on the boundary of $\localU$, we have
\beqn
\begin{aligned}
\zk \in \bdy\pa{\localU}  
\quad\Longrightarrow&\quad
\zk 
\in \rbd\Pa{\xk + \gamma \Ae(\xk)} 
= \xk + \gamma \rbd\Pa{ \Ae(\xk)}  \\
\quad\Longleftrightarrow&\quad
\uk = \qfrac{ \zk - \xk}{\gamma} \in \rbd\pa{\Ae(\xk)} ,
\end{aligned}
\eeqn
which contradicts with \eqref{eq:uk-in-ri-Ae-xk}. We prove the first claim. 

\item
{\bf Manifold identification.}
By Proposition \ref{prop:local-union}~(ii), we have $J_{\gamma \Ae}(\erosionU) \subseteq \cM\cap \ball{\varepsilon}{\xbar}$, so we deduce $\xk = J_{\gamma \Ae}(\xk+\gamma \uk) \in \calM$ by \eqref{eq:inclusion}, %Hence \calM\calM is identifiable at \xbar\xbar for~\ubar\ubar.  
and conclude the proof. \hfill $\square$
\end{enumerate}
\end{proof}

Before presenting examples, we need to discuss (locally) the uniqueness of the active manifold, as in \cite[Corollary 4.2]{hare2004identifying}. The strong inclusion is key to ensuring uniqueness, and as we will see later, when strong inclusion fails, uniqueness also fails.

\begin{proposition}[Uniqueness of active manifold]\label{prop:uniqueness}
Let $A:\Rn\setvalued\Rn$ be a set-valued operator, $\calM_1, \calM_2 \subset \Rn$ be {$C^p$}-manifolds that both contain $\xbar$. Assume conditions \iref{cnd:resol-reg} and \iref{cnd:str-inc} hold. 
If $A$ is partly smooth at $\xbar$ relative to both $\calM_1$ and $\calM_2$, then near $\xbar$ there holds $\calM_1 = \calM_2$. 
\end{proposition}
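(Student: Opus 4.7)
The strategy is to use Theorem \ref{thm:ps-ident} symmetrically: I will show that any sequence on $\calM_2$ converging to $\xbar$ must eventually lie on $\calM_1$, and vice versa. Denote by $\localU_i$ (for $i=1,2$) the local union associated with $\calM_i$ as in \eqref{eq:union-U}, and let $\erosionU_i$ denote its erosion. Since $A$ is partly smooth at $\xbar$ relative to both $\calM_1$ and $\calM_2$, and the hypotheses \iref{cnd:resol-reg}--\iref{cnd:str-inc} are stated intrinsically at $(\xbar,\ubar)$ and hence apply simultaneously with respect to both manifolds, Theorem \ref{thm:ps-ident}\,(i) yields
\[
\zbar \;=\; \xbar + \gamma \ubar \;\in\; \inte(\localU_1) \cap \inte(\localU_2).
\]

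Next, consider any sequence $\calM_2 \ni \xk \to \xbar$. Because $A$ is continuous (in particular, lower semi-continuous) along $\calM_2$ near $\xbar$ by condition \iref{ps:continuity} of Definition \ref{dfn:ps-svo}, and $\ubar \in A(\xbar)$, there exists a sequence $\uk \in A(\xk)$ with $\uk \to \ubar$. For $k$ large enough we have $\norm{\xk - \xbar} < \varepsilon$ and $\norm{\uk - \ubar} < \varepsilon$, so $\uk \in \Ae(\xk)$ for the $\varepsilon$-localization of $A$ around $(\xbar,\ubar)$. Moreover $\xk + \gamma \uk \to \zbar$, and since $\zbar \in \inte(\localU_1)$, for all $k$ sufficiently large
\[
\xk + \gamma \uk \;\in\; \erosionU_1 .
\]
Applying Theorem \ref{thm:ps-ident}\,(ii) relative to $\calM_1$ gives $\xk \in \calM_1$ for $k$ large enough. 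This shows that $\calM_2 \subset \calM_1$ in a neighborhood of $\xbar$. Reversing the roles of $\calM_1$ and $\calM_2$ by an identical argument yields $\calM_1 \subset \calM_2$ locally, so $\calM_1 = \calM_2$ near $\xbar$.

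\textbf{Main difficulty.} The only delicate point is ensuring that the sequence $\uk$ produced by lower semi-continuity along $\calM_2$ lands in the localization $\Ae$ used in the construction of $\localU_1$ (which is tied a priori to $\calM_1$). This is handled by choosing $\varepsilon$ smaller than the minimum of the two localization radii furnished by Theorem \ref{thm:ps-ident} for the two manifolds, so that the same $\Ae$ and the same resolvent-regularity constants $r, \gamma$ are admissible for both constructions. Once the localizations are aligned in this way, the rest of the argument is a direct transfer through the interior inclusion \eqref{eq:zbar-in-localU} and the identification conclusion \eqref{eq:zk-in-erosionU} of Theorem \ref{thm:ps-ident}, with the strong inclusion \iref{cnd:str-inc} being essential precisely because it is what guarantees $\zbar$ lies in the \emph{interior}, not merely on the boundary, of each $\localU_i$.
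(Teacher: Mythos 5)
Your proposal is correct and follows essentially the same route as the paper: produce a dual sequence $\uk\to\ubar$ via the continuity of $A$ along one manifold, then invoke Theorem \ref{thm:ps-ident} relative to the other manifold to force the points onto it. The only difference is cosmetic — you argue both inclusions directly while the paper argues by contradiction from a sequence in $\calM_1\setminus\calM_2$ — and your extra care about aligning the localization radii is a reasonable refinement of the same idea.
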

\begin{proof}
Suppose $\calM_1 \neq \calM_2$ locally near $\xbar$ and there exists a sequence $\seq{\xk}$ converging $\xbar$ satisfying $\xk \in \calM_1 \setminus \calM_2$. 

Since $A$ is partly smooth at $\xbar$ relative to $\calM_1$, we have that $A(\xk)$ converge to $A(\xbar)$. For the dual vector $\ubar$, there exists a sequence of dual vectors $\seq{\uk}$ with $\uk\in A(\xk)$ and $\uk\to\ubar$. Apply Theorem \ref{thm:ps-ident} for $\calM=\calM_2$, we get $\xk\in\calM_2$ which contradicts with $\xk \in \calM_1 \setminus \calM_2$. 
\end{proof}

In the following, we provide two simple examples to illustrate the geometry of local union and its role in the analysis of identification. In Section \ref{sec:examples}, example of $\ell_0$-norm is provided. 

\begin{figure}[!htb]
\centering
\includegraphics[width=0.85\linewidth, trim={5mm 5mm 5mm 2mm},clip]{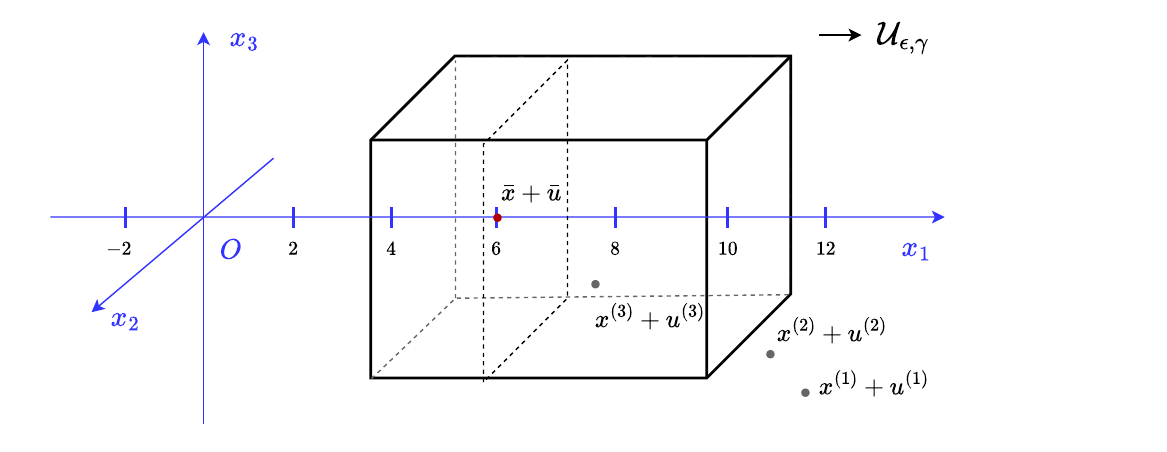}
\caption{Example of partly smooth set-valued maximally monotone operator and identifiable set.}
\label{fig:3dlasso}
\end{figure}

\begin{example}[$\ell_1$-norm]

% \todo{Add a plot here, case of 3D LASSO...}
To illustrate Theorem \ref{thm:ps-ident}, we consider the following simple example in $\RR^3$, where $A$ is a maximally monotone operator of the form, for $\forall x \in \bbR^3$
\beqn
A(x) = \sign\pa{x} + \begin{pmatrix} x_1-7 \\ x_2-1/2 \\ x_3-1/2 \end{pmatrix}  ,~\textrm{where}~~
\sign\pa{x} = 
\left\{
\begin{aligned}
&+1  ,& x > 0  ,  \\
&[-1, 1] ,& x = 0  ,  \\
&-1 ,& x < 0  .
\end{aligned}
\right.
%A(x) = \sign\pa{x} + \pa{x - b}  ,
\eeqn
%where b=(7,1/2,1/2)⊤b = (7, 1/2, 1/2)^\top.
Apparently for $\xbar = [ 6,0,0 ]^\top$ we have $\ubar = 0 \in \ri\pa{A(\xbar)}$, and also $A$ is partly smooth along the set $\calM = [\bbR,0,0]^\top$. Therefore, consider the local union $\localU$ of $x+A(x)$ of $\xbar$ along $\calM$ which is shown in Figure \ref{fig:3dlasso}, clearly, $\localU$ has full dimension.
For the sequence $\xk+\uk \to \xbar+\ubar$, the first two points  $x^{(1)}+u^{(1)}$ and $x^{(2)}+u^{(2)}$ are outside $\localU$, and starting from the 3rd point, $\xk+\uk \in \inte\pa{\localU}$ and $\uk \in \ri\pa{A(\xk)}$.

\end{example}

\begin{figure}[!htb]
\centering
 \subfloat[2D example]  
  {
      \includegraphics[width=0.47\textwidth, trim={15mm 9mm 44mm 5mm},clip]{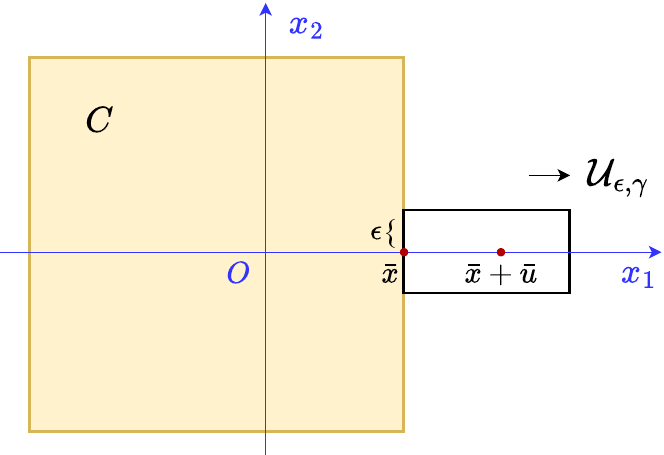}
  }
  \subfloat[3D example]
  {
      \includegraphics[width=0.47\textwidth, trim={10mm 5mm 34mm 5mm},clip]{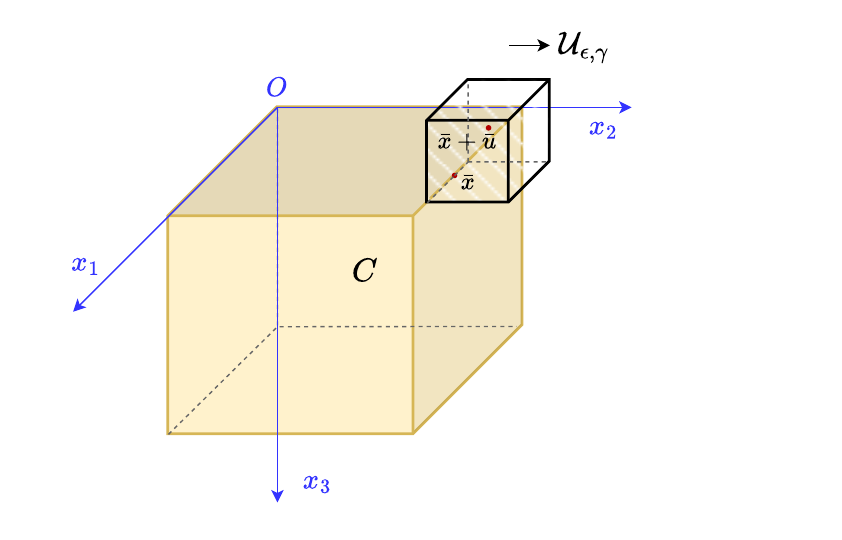}
  }
\caption{Illustrations of local unions for indicator functions. (a) A two-dimensional example. (b) A three-dimensional example.}
  \label{fig:indicator}           
\end{figure}

\begin{example}[Indicator function]
We now consider indicator function examples, in both $\RR^2$ and $\RR^3$. The indicator function is represented as
\[
\iota_C(x) = \left\{
\begin{array}{cc}
    0  &, x\in C \\
    +\infty &, x\notin C
\end{array}
\right.
\]
As shown in Figure~\ref{fig:indicator}, we present two examples of the indicator function $\iota_C$, where the set $C$ is represented by the yellow region. We then plot the local union $\localU$ based on given $\xbar$ and $\ubar$. Note that according to the definition of the local union, it suffices to consider $u$ within the $\varepsilon$-localization around $\ubar$.

\end{example}

\begin{remark}\label{rmk:dual-norm}
From the illustration of $\ell_1$-norm, it can be seen that $\localU$ is a polyhedron. This is due to the fact that the dual norm of $\ell_1$-norm is $\ell_\infty$-norm. For this case, we can refine the condition in \eqref{eq:bound} by defining the distance function
\[
\dist\Pa{\ubar, \Ae(\xk)} = \inf_{u\in\Ae(\xk)} \norm{\ubar-u}_{\infty} ,
\]
instead of the standard $\ell_2$-norm distance. This allows to provide a tighter analysis on identification, see Section \ref{sec:prox-sgd} for discussion. 
\end{remark}

For the rest of the section, we need to discuss the following problems: 1) relation with the constant rank property \cite{lewis2022partial}; 2) identifiability without strong inclusion condition \iref{cnd:str-inc}; 3) if the sequence $\seq{\xk}$ is generated by a numerical scheme, what is the upper bound for the number of steps needed for identification.

\subsection{Relation with constant rank property}

We investigate the relationship between our definition and the constant rank property of \cite{lewis2022partial}. It can be shown that under the strong inclusion condition, Definition \ref{dfn:ps-svo} naturally leads to the constant rank property, thereby establishing a connection between these two concepts.

We begin by recalling the definition of partly smooth operators via constant rank property \cite{lewis2022partial}. %, which is framed from the perspective of the constant rank.
Let $\calX, \calU$ be subsets of $\Rn$, define the canonical projection ${\rm proj} : \calX \times \calU \to \calX $ by ${\rm proj}(x,u) = x$. 

\begin{definition}[Constant rank \cite{lewis2022partial}]
\label{dfn:con-rank}
A set-valued mapping $A: \calX \rightrightarrows \calU$ is called \textit{partly smooth} at a point $\xbar\in \calX$ for $\ubar\in A(\xbar)$ when the graph $\gra(A)$ is a smooth manifold around $(\xbar,\ubar)$ and the projection ${\rm proj}$ restricted to $\gra(A)$ has constant rank around $(\xbar,\ubar)$. The dimension of $A$ at $\xbar$ for $\ubar$ is then just the dimension of its graph around $(\xbar,\ubar)$.
\end{definition}

From now on, we use the rank of the projection operator to represent the dimension of the manifold $\calM$, which is denoted as $\rank\Pa{ {\rm proj}(\gra(A),~ \calM) }$. 

\begin{proposition}\label{prop:equality} 
Under the resolvent-regularity condition \iref{cnd:resol-reg} and strong inclusion condition \iref{cnd:str-inc}, Definition \ref{dfn:ps-svo} implies Definition \ref{dfn:con-rank}. 
\end{proposition}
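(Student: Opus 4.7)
The plan is to verify the two requirements of Definition \ref{dfn:con-rank}: that $\gra(A)$ is a smooth manifold near $(\xbar,\ubar)$, and that the canonical projection ${\rm proj}$ restricted to $\gra(A)$ has constant rank there.

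The first step is to describe the graph locally. Combining the dual-interior argument from the opening of the proof of Theorem \ref{thm:ps-ident} with \iref{cnd:str-inc} and \iref{cnd:resol-reg}, every $(x,u)\in\gra(\Ae)$ sufficiently close to $(\xbar,\ubar)$ satisfies $x\in\calM$ and $u\in\ri\Pa{A(x)}$. By the local normal sharpness of Proposition \ref{prop:lns}, $\Lin\pa{A(x)}=\normal{\calM}(x)$, so writing $\tilde{A}(x)=\proj_{\tangent{\calM}(x)}\pa{A(x)}$ for the smooth representative, the relative interior property forces locally
\[
A(x)\cap\ball{\varepsilon}{\ubar}=\pa{\tilde{A}(x)+\normal{\calM}(x)}\cap\ball{\varepsilon}{\ubar}.
\]
Thus near $(\xbar,\ubar)$ the graph is the image of the map $\Psi:(x,v)\mapsto\pa{x,\tilde{A}(x)+v}$ defined on pairs $(x,v)$ with $x\in\calM$ near $\xbar$ and $v\in\normal{\calM}(x)$ near $\ubar-\tilde{A}(\xbar)$.

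The second step reads off both the manifold structure and the constant rank from $\Psi$. Since $\calM$ is $C^p$-smooth, its normal bundle is a smooth $n$-dimensional manifold, and $\Psi$ is a bijective local parametrization of $\gra(A)$ near $(\xbar,\ubar)$, supplying the required manifold structure of dimension $n$. A coordinate-free alternative is to use Proposition \ref{prop:local-union}: the assignment $(x,u)\mapsto x+\gamma u$ is a homeomorphism from $\gra(\Ae)$ onto the open set $\localU\subset\Rn$, with continuous inverse $z\mapsto\pa{\rslt_{\gamma\Ae}(z),\,\gamma^{-1}(z-\rslt_{\gamma\Ae}(z))}$ supplied by \iref{cnd:resol-reg}. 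For constant rank, a tangent vector to $\gra(A)$ at a nearby $(x,u)$ has, via $\Psi$, the form $\pa{\dot{x},\,D\tilde{A}(x)\dot{x}+\dot{v}}$ with $\dot{x}\in\tangent{\calM}(x)$ and $\dot{v}\in\normal{\calM}(x)$; the first-coordinate projection is surjective onto $\tangent{\calM}(x)$ with kernel $\ba{0}\times\normal{\calM}(x)$, so $d{\rm proj}|_{\gra(A)}(x,u)$ has rank $\dim(\calM)$ uniformly in $(x,u)$.

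The main obstacle is promoting the mere continuity of $\tilde{A}$ asserted in Definition \ref{dfn:ps-svo} into the smoothness needed for $\Psi$ to be a genuinely smooth parametrization rather than a topological one. The plan here is to leverage \iref{cnd:resol-reg} together with Proposition \ref{prop:local-union}(ii): on $\calM$ the assignment $x\mapsto x+\gamma\tilde{A}(x)$ takes values in an open tangential slice of $\localU$, and inverting through the single-valued continuous resolvent $\rslt_{\gamma\Ae}$ expresses $\tilde{A}|_\calM$ as a composition of smooth pieces with the $C^p$ structure of $\calM$, which should furnish the regularity needed to conclude.
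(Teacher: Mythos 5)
Your overall strategy is the one the paper itself follows: near $(\xbar,\ubar)$ the graph fibres over $\calM$, the fibre over $x$ being a relatively open piece of the affine set $\tilde{A}(x)+\normal{\calM}(x)$, so that ${\rm proj}$ restricted to the localized graph has image $\calM\cap\ball{\varepsilon}{\xbar}$ and rank $\dim\calM$ everywhere. The paper's proof is terser -- it asserts that $\gra(A)$ is locally a manifold and reads off constant rank from ${\rm proj}\pa{\gra(A)\cap\pa{\ball{\varepsilon}{\xbar}\times\ball{\varepsilon}{\ubar}}}=\calM\cap\ball{\varepsilon}{\xbar}$ together with the uniqueness of $\calM$ from Proposition \ref{prop:uniqueness} -- whereas you make the fibration explicit via $\Psi$ and compute $d\,{\rm proj}$ on tangent vectors. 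That added detail is welcome; note only that the statement ``every $(x,u)\in\gra(\Ae)$ near $(\xbar,\ubar)$ has $x\in\calM$'' is really the identification claim of Theorem \ref{thm:ps-ident}(ii), not just the dual-interior step \eqref{eq:vk-in-int}, and that $A(x)\cap\ball{\varepsilon}{\ubar}=\pa{\tilde{A}(x)+\normal{\calM}(x)}\cap\ball{\varepsilon}{\ubar}$ requires $\varepsilon$ to be taken small relative to a \emph{uniform} distance of the nearby dual vectors to $\rbd\pa{A(x)}$, which is again supplied by \eqref{eq:vk-in-int}.

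The genuine gap is the one you flag yourself, and your proposed repair does not close it. For $\gra(A)$ to be a smooth (even $C^1$) submanifold, and for ``rank of ${\rm proj}|_{\gra(A)}$'' to be meaningful at all, the representative $\tilde{A}$ must be at least $C^1$ along $\calM$; Definition \ref{dfn:ps-svo} only asserts continuity. Resolvent-regularity cannot supply the missing smoothness: Definition \ref{def:prox-reg} guarantees that $\rslt_{\gamma\Ae}$ is single-valued and \emph{continuous}, nothing more, so composing with it yields only a topological parametrization. The route is moreover circular: to invert $x\mapsto x+\gamma\tilde{A}(x)$ \emph{smoothly} through the resolvent you would already need that map to be a diffeomorphism onto its image, i.e.\ $\tilde{A}$ smooth to begin with. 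To be fair, the paper's own proof has the same hole, since it simply declares the graph to be a manifold. A clean version of Proposition \ref{prop:equality} needs either an explicit added hypothesis that $\tilde{A}$ is $C^{p-1}$ along $\calM$ (mirroring the function case, where $\tilde{A}=\nabla\pa{f|_{\calM}}$ inherits smoothness from condition \ref{psf:smoothness} of Definition \ref{dfn:psf}), or a set-theoretic reading of ``constant rank'' through the constancy of the projected image, which is all that is actually verified.
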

\begin{proof}
As $\calM$ is a manifold and $A$ is partly smooth at $\xbar$ relative to $\calM$, the graph $\gra(A)$ locally around $(\xbar, \ubar)$ is a manifold. 
From Proposition \ref{prop:uniqueness}, the resolvent-regularity of $A$ at $\xbar$ for $\ubar$ ensures locally the uniqueness of $\calM$. 
Define the localization of graph of $A$ as
$$
G_{\varepsilon} = \gra(A) ~\mathlarger{\cap}~ \Pa{ \ball{\varepsilon}{\xbar}\times\ball{\varepsilon}{\ubar} } ,
$$
we have 
\[
{\rm proj}(G_{\varepsilon}) = \calM \cap \ball{\varepsilon}{\xbar} 
\]
which is unique. Hence ${\rm proj}$ has constant rank property around $(\xbar, \ubar)$. \hfill $\square$
\end{proof}

\subsection{Strong inclusion condition fails}

The strong inclusion condition \iref{cnd:str-inc}, \aka non-degeneracy condition in optimization, is crucial to the identifiability, as it ensures the uniqueness of the active manifold, which also means the active manifold is ``minimal'' in terms of dimension. 

When \iref{cnd:str-inc} fails, so does \eqref{eq:zbar-in-localU}. 
However, the failure of strong inclusion is not always easy to characterize, take $\ell_0$-norm for example, whose subdifferential is unbounded and proximal subdifferential is an open set, see Section \ref{sec:subdiff-l0}. 
Therefore, in this part we focus on the case $A$ is moreover (locally) monotone around $\xbar$.

\begin{remark}
    In \cite{Fadili2018}, based on the ``mirror stratification'' of proper l.s.c. convex functions, the authors studied the identifiability without non-degenerate condition. It is shown that the failure of non-degeneracy results results in an enlarged manifold that contains the minimal true manifold, and identification lands on a layer between the minimal manifold and the enlarged manifold. 
    However, one drawback of their result is that it heavily relies on the dual characterization of the functions, and cannot handle the problems involving multiple partly smooth functions. 
\end{remark}

With our novel local characterization of identification, we are able to extend the identifiability to the degenerate setting. Similar to \cite{Fadili2018}, we also need an enlarged manifold, which is provided in the definition below. Recall the definition of the local union $\localU$ in \eqref{eq:union-U}, since $A$ is locally monotone around $\xbar$, $A$ is resolvent regular at $\xbar$ for any $u\in A(\xbar)$. Define the following local union parameterized by only $\gamma > 0$,
\beqn %\label{eq:union-U-gamma}
\calU_{\gamma} 
\eqdef {\mathsmaller \bigcup}_{x\in \calM\cap \ball{\varepsilon}{\xbar}} \Pa{x+ \gamma A(x)}  ,
\eeqn 
with $\varepsilon>0$ small enough such that $A$ is monotone over $\ball{\varepsilon}{\xbar}$. 

\begin{definition}[Enlarged manifold]\label{def:enlarge-manifold}
Let the set-valued operator $A:\Rn\setvalued\Rn$ be locally monotone around $\xbar$, and partly smooth at $\xbar$ relative to a {$C^p$}-manifold $\calM \subset \Rn$. 
% Suppose $A$ is $r$-resolvent-regular at $\xbar$ for $\ubar \in \rbd \Pa{ \Ae(\xbar) } $. 
% 
Let $\ubar \in \rbd\Pa{ A(\xbar) }$, then the enlarged manifold for $\ubar$ is defined by
\[
\widehat{\calM}
\in \argmin_{\rank\pa{ {\rm proj}(\gra(A),~ \calM') }} \bBa{ \calM' \subset \Rn \mid \zbar \in \inte \Pa{ \calU_{\gamma} } ,~~ \calM \subseteq \calM'  }  ,
\]
where $\zbar= \xbar+\gamma \ubar$. 
\end{definition}

Note that in the definition, the enlarged manifold requires to be the smallest manifold such that interior inclusion holds. 
We are ready to discuss the identifiability without strong inclusion condition. 

\begin{corollary}[Enlarged identification]
	\label{prop:iden-degenerate}
Suppose the set-valued operator $A:\Rn\setvalued\Rn$ is locally monotone around $\xbar$, and partly smooth at $\xbar$ relative to a {$C^p$}-manifold $\calM \subset \Rn$. Given a degenerate dual vector
\[
\ubar \in \rbd \Pa{ A(\xbar) } .
\]
There exists an enlarged manifold $\widehat{\calM}$ such that
\[
\widehat{\calU}_{\gamma}
\eqdef {\mathsmaller \bigcup}_{x\in \widehat{\calM} \cap \ball{\varepsilon}{\xbar}} \Pa{x+ \gamma A(x)}
\]
has full dimension. 
If, moreover, $A$ is {continuous} at $\xbar$ relative to $\widehat{\calM}$, then $\widehat{\calM}$ is identifiable at $\xbar$ for $\ubar$. 
\end{corollary}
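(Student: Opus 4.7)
The plan is to prove existence of the enlarged manifold $\widehat{\calM}$ via a rank-minimization argument, then adapt the proof of Theorem \ref{thm:ps-ident}, part (ii), to the degenerate setting, where local monotonicity of $A$ supplies the structural regularity that the strong inclusion condition \iref{cnd:str-inc} was providing.

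First, I would verify that the minimization in Definition \ref{def:enlarge-manifold} is well-posed. The candidate family of $C^p$-manifolds $\calM' \supseteq \calM$ for which $\zbar \in \inte(\calU_\gamma)$ holds (with $\calU_\gamma$ computed relative to $\calM'$) is non-empty: one can always take $\calM'$ to be an open $C^p$-neighborhood of $\xbar$ in $\Rn$, viewed as a trivial $n$-dimensional manifold, in which case $\calU_\gamma$ trivially contains an open neighborhood of $\zbar$ since $A(x)$ is nonempty and convex locally. Since the projection-rank $\rank(\mathrm{proj}(\gra(A), \calM'))$ takes integer values in $\{\dim \calM, \ldots, n\}$, the minimum is attained, yielding $\widehat{\calM}$. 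That $\widehat{\calU}_\gamma$ has full dimension is then immediate from the interior inclusion $\zbar \in \inte(\widehat{\calU}_\gamma)$, since any set with non-empty interior in $\Rn$ has linear span equal to $\Rn$.

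Second, for the identification claim, I would follow the template of Theorem \ref{thm:ps-ident}(ii). Local monotonicity of $A$ around $\xbar$, together with the resolvent-regularity implied by monotonicity (via Lemma \ref{lemma:resolvent-maximally-monotone}), guarantees that for sufficiently small $\gamma$ and $\varepsilon$, $\rslt_{\gamma \Ae}$ is single-valued and firmly nonexpansive on a neighborhood of $\zbar$, where $\Ae$ is the $\varepsilon$-localization of $A$ around $(\xbar, \ubar)$. The assumed continuity of $A$ along $\widehat{\calM}$ then ensures that for every $x \in \widehat{\calM} \cap \ball{\varepsilon}{\xbar}$ and $u \in \Ae(x)$, the resolvent identity $x = \rslt_{\gamma \Ae}(x + \gamma u)$ holds, so that $\rslt_{\gamma \Ae}(\widehat{\calU}_\gamma) \subseteq \widehat{\calM} \cap \ball{\varepsilon}{\xbar}$ by the same single-valuedness reasoning as Proposition \ref{prop:local-union}(ii). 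For any sequence $(\xk, \uk)$ with $\uk \in \Ae(\xk)$ and $\xk + \gamma \uk$ eventually contained in a small erosion of $\widehat{\calU}_\gamma$ (analogous to \eqref{eq:zk-in-erosionU}), we then conclude $\xk = \rslt_{\gamma \Ae}(\xk + \gamma \uk) \in \widehat{\calM}$, which is precisely the identifiability of $\widehat{\calM}$ at $\xbar$ for $\ubar$.

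The main obstacle will be justifying that the hypotheses suffice to invoke the resolvent-based argument on $\widehat{\calM}$ \emph{without} assuming full partial smoothness of $A$ relative to it: the sharpness condition of Definition \ref{dfn:ps-svo} need not carry over to the enlarged manifold, so the identification mechanism cannot rely on normal sharpness as in Proposition \ref{prop:lns}. Instead, local monotonicity carries the load by forcing single-valuedness of the resolvent in a neighborhood of $\zbar$, and the assumed continuity of $A$ along $\widehat{\calM}$ bridges the gap by guaranteeing that points on $\widehat{\calM}$ are in fact recovered by the resolvent. Making this substitution of hypotheses precise—and verifying that the erosion-plus-interior-inclusion argument still closes cleanly once sharpness is relinquished—is the technically delicate step.
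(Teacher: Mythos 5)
Your argument is correct and follows essentially the same route as the paper, whose entire proof of this corollary is the single sentence that it is a direct consequence of Theorem~\ref{thm:ps-ident} applied to the enlarged manifold of Definition~\ref{def:enlarge-manifold}; your expansion---existence of $\widehat{\calM}$ by attainment of an integer-valued minimum over a nonempty family, full dimension of $\widehat{\calU}_{\gamma}$ read off from the interior inclusion $\zbar\in\inte(\widehat{\calU}_{\gamma})$, and identification via the resolvent mechanism of part (ii) with local monotonicity standing in for the sharpness condition, which indeed fails on $\widehat{\calM}\supsetneq\calM$---is precisely the reasoning the paper leaves implicit. The one step I would tighten is your claim that taking $\calM'$ to be a full open neighborhood of $\xbar$ ``trivially'' gives $\zbar\in\inte(\calU_{\gamma})$: this requires $\Id+\gamma A$ to be locally surjective near $\zbar$ (a local Minty-type consequence of local maximal monotonicity), not merely nonemptiness and convexity of the values of $A$, though the paper glosses over this point as well.
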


The above identifiability is the direct consequence of Theorem \ref{thm:ps-ident}.

\begin{remark}
Though we can obtain identifiabilty, the identified manifold  is no longer unique. In terms of the dimension of the identified manifold, it can be euqal to that of $\calM$ or $\widehat{\calM}$, or something between. 
In Section \ref{exp:iden-degeneracy}, we provide examples to illustrate this. 
\end{remark}

\subsection{Upper bound for identificaton steps}

In either the previous result, Proposition \ref{prop:iden-function}, or our Theorem \ref{thm:ps-ident}, it is only stated that for $k$ large enough, manifold identification occurs. However, the estimation of this $k$ is not provided. 
This problem has been considered in the literature, such as \cite{liang2017local,sun2019are}. Based on new local characterization, we are able to provide a new estimation.

For the sake of simplicity, our following discussion assumes the dual vector is convergent.

\begin{proposition}\label{prop:step}
Let the set-valued operator $A:\Rn\setvalued\Rn$ be partly smooth at $\xbar$ relative to a manifold $\calM \subset \Rn$, and $r$-resolvent-regular at $\xbar$ for  
$\ubar \in \rbd \Pa{ \Ae(\xbar) } $. 
Let $K>0$ be such that
\[
\norm{(\xk+\gamma\uk)-(\xbar+\gamma\ubar)} \leq  d \eqdef \dist\Pa{ \xbar+\gamma\ubar,~\localU }
\]
holds for all $k\geq K$, then $$\xk\in\calM,~\forall k\geq K.$$
In particular, if there exists $p>0$ such that for any $k\in\bbN$, 
\beq\label{eq:ukubar-p-xkxbar}
\norm{\uk-\ubar} \leq p\norm{\xk-\xbar} .
\eeq
Then we have
\begin{enumerate}[label={\rm (\roman*)}, ref={\rm (\roman*)}]
\item If $L=\sum_{k\in\bbN}\norm{\xk-\xkm} < +\infty$, then % $K$ is such that
\[
K = \argmin_{ k } \Bba{ k \geq 0 \mid   \sum_{i=0}^{k-1} \norm{x^{(i)}-x^{(i+1)}} \geq L - \qfrac{d}{1+\gamma p} } . 
\]

\item If $\norm{\xk-\xbar} \leq C \rho^k$ for some $C> 0$ and $\rho \in ]0,1[$, then 
\[
K = \left\lceil \qfrac{ \log d - \log (1+\gamma p) -\log C }{ \log \rho } \right\rceil ,
\]
where $\lceil q \rceil,~q\in\RR$ denotes the smallest integer that is larger than $q$. 
\end{enumerate}
\end{proposition}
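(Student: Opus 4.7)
The plan is to reduce everything to the sufficient condition for identification already established in Theorem~\ref{thm:ps-ident} (and made explicit in Remark~\ref{rmk:identification}): if a point $\xk+\gamma\uk$ lies inside the ball $\ball{d}{\xbar+\gamma\ubar}$ with $d=\dist\pa{\xbar+\gamma\ubar,\bdy(\localU)}$, then this ball is contained in $\localU$, so applying the resolvent $J_{\gamma\Ae}$ returns a point of $\calM\cap\ball{\varepsilon}{\xbar}$ by Proposition~\ref{prop:local-union}~(ii), and that point is exactly $\xk$ by resolvent-regularity. Thus the very first statement, that $\xk\in\calM$ for every $k\ge K$, is a direct rereading of the hypothesis through Theorem~\ref{thm:ps-ident}; no new work is needed beyond citing these earlier results.

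For the quantitative estimates I would next convert the ``$\xk+\gamma\uk$ is close to $\xbar+\gamma\ubar$'' condition into a condition solely on $\norm{\xk-\xbar}$. Using the triangle inequality together with hypothesis \eqref{eq:ukubar-p-xkxbar}, one obtains
\beq\label{eq:triangle-bound}
\norm{(\xk+\gamma\uk)-(\xbar+\gamma\ubar)} \le \norm{\xk-\xbar}+\gamma\norm{\uk-\ubar} \le (1+\gamma p)\norm{\xk-\xbar}.
\eeq
Hence the identification criterion from the first part of the proposition is implied by the single scalar inequality $\norm{\xk-\xbar}\le d/(1+\gamma p)$. The rest is purely arithmetic: bound $\norm{\xk-\xbar}$ in the two summability/rate regimes and solve for $k$.

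For claim (i), summability $L=\sum_{k\in\bbN}\norm{\xk-\xkm}<+\infty$ implies, by a telescoping argument, that
\[
\norm{\xk-\xbar} \le \msum_{i\ge k}\norm{x^{(i)}-x^{(i+1)}} = L-\msum_{i=0}^{k-1}\norm{x^{(i)}-x^{(i+1)}}.
\]
Combining with \eqref{eq:triangle-bound}, the criterion $(1+\gamma p)\norm{\xk-\xbar}\le d$ is guaranteed as soon as $\sum_{i=0}^{k-1}\norm{x^{(i)}-x^{(i+1)}}\ge L-d/(1+\gamma p)$, which yields the stated expression for $K$ by taking the smallest such $k$. For claim (ii), substituting the geometric bound $\norm{\xk-\xbar}\le C\rho^k$ into the same scalar inequality gives $C\rho^k\le d/(1+\gamma p)$; taking logarithms and using $\log\rho<0$ (which flips the inequality) produces the formula involving the ceiling.

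The main step that required any argument at all is the reduction \eqref{eq:triangle-bound}; everything else is either a direct application of Theorem~\ref{thm:ps-ident}/Remark~\ref{rmk:identification} or elementary algebra. If there is a subtlety to verify, it is that $d>0$ under the stated hypotheses, which follows because $\xbar+\gamma\ubar\in\inte(\localU)$ whenever the local union has full dimension around $\zbar$ (so that $\bdy(\localU)$ is at positive distance); strictly speaking, one should note that the estimates (i)--(ii) are meaningful only in this regime, otherwise $K$ is trivially $0$ or $+\infty$.
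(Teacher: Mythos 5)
Your proposal is correct and follows essentially the same route as the paper: the triangle-inequality reduction $\norm{(\xk+\gamma\uk)-(\xbar+\gamma\ubar)}\le(1+\gamma p)\norm{\xk-\xbar}$, the telescoping bound $\norm{\xk-\xbar}\le L-\sum_{i=0}^{k-1}\norm{x^{(i)}-x^{(i+1)}}$ for claim (i), and the logarithmic computation for claim (ii) are exactly the paper's argument. Your closing observation that one should check $d>0$ (via $\zbar\in\inte(\localU)$) is a small but worthwhile addition that the paper's proof leaves implicit.
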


\begin{proof}
Note that if
\[
\begin{aligned}
\norm{(\xk+\gamma \uk) - (\xbar+\gamma\ubar)}
&\leq \norm{\xk-\xbar} + \gamma \norm{\uk-\ubar} \\
&\leq (1+\gamma p) \norm{\xk-\xbar}  \\
&\leq d ,
\end{aligned}
\]
identification happens owing to Theorem \ref{thm:ps-ident}. 
\begin{enumerate}[label={\rm (\roman*)}, ref={\rm (\roman*)}]
\item The finite sum $L=\sum_{k\in\bbN}\norm{\xk-\xkm} < +\infty$ means the sequence $\seq{\xk}$ has finite length. Therefore, given $K>0$, if we let
\[
\begin{aligned}
(1+\gamma p) \norm{x^{(K)}-\xbar}  
&\leq (1+\gamma p) \sum_{i=K}^{\infty} \norm{x^{(i)} - x^{(i+1)}} \\
&\leq (1+\gamma p) \Ppa{ L -  \msum_{i=0}^{K-1} \norm{x^{(i)} - x^{(i+1)}} } 
\leq d ,
\end{aligned}
\]
then we have
\[
\sum_{i=0}^{K-1} \norm{x^{(i)} - x^{(i+1)}}
\geq L - \qfrac{d}{1+\gamma p} .
\]
Let $K$ be the smallest integer such that the above inequality holds, then we prove the claim.

\item The second case is much more straight as $\xk$ converges linearly. As a result, if $K$ is large enough such that
\[
\begin{aligned}
(1+\gamma p) \norm{x^{(K)}-\xbar}  
\leq (1+\gamma p) \times C \rho^K
&\leq d ,
\end{aligned}
\]
then identification happens and we have
\[
K \geq \qfrac{ \log d - \log (1+\gamma p) - \log C }{ \log \rho } .
\]
Taking the smallest integer larger than the {\tt lhs} of the above inequality concludes the proof.  \hfill $\square$
\end{enumerate}

\end{proof}

\begin{remark}$~$
\begin{itemize}
\item Condition \eqref{eq:ukubar-p-xkxbar} is algorithm dependent, see Example \ref{eg:FB-step} for an illustration of Forward--Backward splitting method. 
    \item In general, in the context of non-smooth optimization or monotone inclusion, when only a sublinear convergence rate is achieved, it is typically expressed in terms of the residual $\norm{\xk-\xkm}$ rather than $\norm{\xk-\xbar}$, unless stronger assumptions are imposed \cite{liang2016convergence}. However, finite length property exists in for many problems, such as nonsmooth optimization with objective satisfying \KL inequality. Another example is the FISTA method; as shown in \cite{liang2022improving}, a modified version achieves the following convergence rate
    \[
    \sum_{k\in\bbN} k \norm{\xk-\xkp}^2 < +\infty ,
    \]
    which is analog to finite length. 

    \item Under degeneracy condition, we can also provide an upper bound estimation of identification step as stated in Proposition \ref{prop:step}. % Specifically, let $d$ be defined as in \eqref{eq:dist-larger}.
\end{itemize}
    
\end{remark}

Below we use Forward--Backward splitting method to illustrate Proposition \ref{prop:step}.

\begin{example}\label{eg:FB-step}
Consider the following monotone inclusion
\beq\label{eq:mi}
{\rm find}\quad x\in\Rn \quad {\rm such~that~}\quad  0\in A(x)+B(x) ,
\eeq
where $A:\Rn\setvalued\Rn$ is a maximally monotone and $B:\Rn\to\Rn$ is a $\beta$-cocoercive operator with $\beta>0$. Let $\{\xk\}_{k\in\bbN}$ be the sequence generated from the Forward-Backward splitting method \cite{lions1979splitting}
\[
\xkp = \Jj_{\gamma A} \pa{\Id - \gamma B}(\xk) ,~~ \gamma \in ]0, 2\beta[ .
\]
\begin{enumerate}[label={\rm (\roman*)}, ref={\rm (\roman*)}]
\item Let $\xbar$ be a solution of \eqref{eq:mi}, then 
\[
\xbar = \Jj_{\gamma A} \pa{\Id - \gamma B}(\xbar)
\]
and denote $\ubar = -B(\xbar)$.

\item 
If $B$ is strongly monotone with modulus $\kappa>0$, that is
\[
\iprod{B(x)-B(y)}{x-y} \geq \kappa \norm{x-y}^2 .
\]
Since $\Jj_{\gamma A}$ is firmly nonexpansive, we have
\[
\begin{aligned}
\norm{\xkp-\xbar}^2
&= \norm{ \Jj_{\gamma A} \pa{\Id - \gamma B}(\xk) - \Jj_{\gamma A} \pa{\Id - \gamma B}(\xbar) }^2 \\
&\leq \norm{ \pa{\Id - \gamma B}(\xk) -  \pa{\Id - \gamma B}(\xbar) }^2  \\
&\leq \norm{\xk-\xbar}^2 - 2\gamma \iprod{\xk-\xbar}{B(\xk)-B(\xbar)} + \gamma^2 \norm{ B(\xk) - B(\xbar) }^2  \\
&\leq \Pa{1-2\gamma\kappa + \sfrac{\gamma^2}{\beta^2}}  \norm{ \xk-\xbar }^2 .
\end{aligned}
\]
When $\gamma(2\kappa-\gamma/\beta^2)\in ]0,1[$, then $\rho^2 = 1-\gamma(2\kappa - \sfrac{\gamma}{\beta^2}) < 1$ and 
\[
\norm{\xk-\xsol} \leq \rho^k \norm{x^{(0)} - \xbar} .
\]
 
\item  
The update of $\xk$ yields the dual vector
\[
\uk = \qfrac{\xkm-\xk}{\gamma} - B(\xkm) \in A(\xk) .
\]
Then we have
\[
\begin{aligned}
\norm{\uk-\ubar} 
&= \norm{\sfrac{\xkm-\xk}{\gamma} -B(\xkm)+B(\xbar)} \\
&= \sfrac{1}{\gamma} \norm{\xbar-\xk + \xkm - \gamma B(\xkm)-\xbar +\gamma B(\xbar)} \\ 
&\leq \sfrac{1}{\gamma} \Pa{ \norm{\xk-\xbar} + \norm{\xkm-\xbar} } \\
&\leq \sfrac{2}{\gamma}  \norm{\xkm-\xbar} \\
&\leq \sfrac{2}{\gamma}  \norm{x^{(0)}-\xbar} \rho^{k-1} .
\end{aligned}
\]
Consequently,
\[
\begin{aligned}
\norm{\xk-\xbar}+\gamma \norm{\uk-\ubar}
&\leq \rho^k \norm{x^{(0)} - \xbar} + 2\norm{x^{(0)}-\xbar} \rho^{k-1}  \\
&\leq \rho^k \times \Pa{ \norm{x^{(0)} - \xbar} + \sfrac{2}{\rho}  \norm{x^{(0)}-\xbar}  }  . 
\end{aligned}
\]
By making $\rho^k \times \Pa{ 1+\sfrac{2}{\rho}}\norm{x^{(0)} - \xbar} < d$, we can obtain the estimation of $K$. 
\end{enumerate}
\end{example}

\begin{remark}
    We can also consider the error bound condition or strong metric regularity \cite{drusvyatskiy2013tilt} in Example~\ref{eg:FB-step}, both of which are weaker than strong monotonicity. However, these conditions can still yield similar estimates for the identification step, differing only in the associated parameter.
\end{remark}

%%%%%%%%
\section{Applications}\label{sec:applications}

In this section, we provide various examples to verify our theoretical findings. 1) We first discuss examples of partly smooth operators, including the (limiting) subdifferential of $\ell_0$-norm, monotone inclusion formulation of Primal--Dual splitting which is also discussed in \cite{lewis2022partial}, and variational inequality. 2) Then we verify the result of identification under nonconvergent dual vector via mini-batch stochastic gradient descent, and under degenerate dual vector via proximal operator of $\ell_1$-norm. 3) Lastly, we verify the upper bound of number of steps for identification via elastic net.

\subsection{Examples of partly smooth set-valued operators}
\label{sec:examples}
In this part, we provide three examples of partly smooth operators, one is the subdifferential of nonconvex function, while the other is monotone operators from Primal--Dual splitting method and variational inequality. 

\subsubsection{Subdifferential of nonconvex function}\label{sec:subdiff-l0}

As subdifferential is an important source of set-valued operators, when a function is partly smooth, its (limiting) subdifferential is also partly smooth. For convex partly smooth functions, the results in Section \ref{sec:identifiability} are quite straightforward to verify as all the regularity conditions are satisfied automatically; we refer to \cite{liang2016thesis} for convex examples. For subdifferential of nonconvex function, the local characterization is controlled by the (prox-)regularity, below we provide an example of $\ell_0$-norm.

\begin{example}[$\ell_{0}$-norm]
For $x\in\Rn$, the $\ell_{0}$-norm of $x$ is defined as
\[
f(x) = \norm{x}_0 \eqdef \#\Ba{ i:x_i\neq 0 } ,
\]
which returns the number of nonzero element in $x$. 
It can verified that $R$ is a polyhedral norm, given $\xbar \in \Rn$, denote $\calS = \ba{i : \bar{x}_i \neq 0}$, then $\ell_0$-norm is partly smooth at $\xbar$ relative to the following manifold
\[
\calM = \Ba{ x \in \RR^n \mid \supp(x) \subseteq \calS } , 
\]
which is a subspace. Therefore, the tangent space of $\calM$ at $\xbar$ is $\calM$ itself, \ie $\calT_{\calM}(\xbar) = \calM$. 
Denote $\calS^\bot = \Ba{1,...,n} \setminus \calS$ the complement of the $\calS$, the subdifferential at $\xbar$ then is
\[
\partial f(\xbar) = \Span\Ba{e_i,~ i\in\calS^\bot} ,
\]
where $e_i$ is the $i$'th standard normal basis of $\Rn$. 
\end{example}

For the sake of simplicity, for the discussion below we let $n=2$ and $\xbar = \begin{bmatrix} 1 & 0 \end{bmatrix}^\top$. For this setting, we have
\[
\calM = \calT_{\calM}(\xbar) = \begin{bmatrix} \RR \\ 0 \end{bmatrix} 
\qandq
\partial f(\xbar) = \begin{bmatrix} 0 \\ \RR \end{bmatrix}  .
\]
Given $\varepsilon > 0$ and $\gamma>0$, define
\[
\begin{aligned}
{\calU}
&= {\mathsmaller \bigcup}_{x\in \calM\cap \ball{\varepsilon}{\xbar}} \Pa{x + \gamma \partial f(x)}  
= \begin{bmatrix} [1-\varepsilon, 1+\varepsilon] \\ \RR \end{bmatrix} .
\end{aligned}
\]
It is clear that $\Lin({\calU}) = \RR^2$. However, $\prox_{\gamma f}(\calU) \neq \calM\cap \ball{\varepsilon}{\xbar}$ as $\calU$ is unbounded and regularity is not considered. 
Let $\ubar = \begin{bmatrix} 0 & 0 \end{bmatrix}^\top \in\partial f(\xbar)$, then it can be verified that $f$ is $r$-prox-regular at $\xbar$ for $\ubar$ for some $r>0$. Denote $(\partial f)_{\varepsilon}$ the localization of $\partial f$, let $\gamma \in ]0, 1/r[$ and define
\[
\begin{aligned}
{\calU_{\varepsilon,\gamma}}
&= {\mathsmaller \bigcup}_{x\in \calM\cap \ball{\varepsilon}{\xbar}} \Pa{x + \gamma (\partial f)_{\varepsilon}(x)}  
= \begin{bmatrix} [1-\varepsilon, 1+\varepsilon] \\ [-\gamma\varepsilon, \gamma\varepsilon] \end{bmatrix} .
\end{aligned}
\]
Given that $\gamma\varepsilon \leq \sqrt{2\gamma} < 1-\varepsilon$, we have
\[
\prox_{\gamma f}(\calU_{\varepsilon, \gamma}) = \calM\cap \ball{\varepsilon}{\xbar} . 
\]
Suppose now we have a sequence $\seq{\xk}$ with $\xk\to\xbar$ and $\partial f(\xk) \ni \uk \to \ubar$, then necessarily for all $k$ large enough, we have
\[
\uk \in (\partial f)_{\varepsilon}(\xk)
\qandq
\xk + \gamma \uk \in \calU_{\varepsilon, \gamma} .
\]
As a result $\xk \in \calM\cap\ball{\varepsilon}{\xbar}$.

\subsubsection{Primal-Dual splitting}

The second example we consider is the Primal-Dual splitting method \cite{chambolle2011first,combettes2012Primal}, the convergence property of the method under partial smoothness is studied in \cite{Liang2018}, later on in \cite{lewis2022partial} it is used an example of partly smooth operators. For the sake of completeness, we provide the derivation here as an example of partly smooth operator which is not the subdifferential of partly smooth functions. 

Consider the following saddle point problem
\begin{equation}\label{eq:saddle}
	\min_{x\in \Rn}\max_{y\in \Rm}~ f(x)+ p(x) + \langle Kx,y\rangle - g^*(y)-q^*(y)
\end{equation}
where 
\begin{itemize}
	\item $K:\Rn\to \Rm$ is a bounded linear operator.
	\item $f: \Rn\to\bar{\RR}$ and $g: \Rm\to\bar{\RR}$ are proper l.s.c. and convex functions.
	\item $p:\Rn\to\bbR$ is convex differentiable with $\beta_p$-Lipschitz continuous gradient.
	\item $q^*:\Rm\to\bbR$ is convex differentiable with $\beta_{q^*}$-Lipschitz continuous gradient
\end{itemize}
Given a saddle point $(\xbar, \ybar)$, the associated optimality condition reads
\[
\begin{aligned}
	0 &\in \partial f(\xbar) + \nabla p(\xbar) + K^\top \ybar , \\
	0 &\in - K\xbar + \partial g^*(\ybar) + \nabla q^*(\ybar) , 
\end{aligned}
\]
which can be written as 
\beq\label{eq:optimal}
0\in  \underbrace{\begin{bmatrix}
		\partial f & K^\top \\
		-K & \partial g^*
\end{bmatrix}}_A \begin{bmatrix}\xbar\\ \ybar\end{bmatrix}  +
\underbrace{\begin{bmatrix}
		\nabla p & 0 \\ 0 & \nabla q^*
\end{bmatrix}}_B \begin{bmatrix}\xbar \\ \ybar\end{bmatrix} . 
\eeq
Where $A$ is maximally monotone and $B$ is co-coercive. 
By denoting $z=\begin{bmatrix}x \\ y\end{bmatrix}$, the original problem \eqref{eq:saddle} is equivalent to solve the following monotone inclusion problem
\begin{equation}\label{eq:GE}
	{\rm find}\quad z\in\Rn\times\Rm  \quad {\rm such~that}\quad 0 \in A(z)+B(z) . 
\end{equation}

Note that $A$ has the decomposition
\[
A = 
\begin{bmatrix}
		\partial f & 0 \\
		0 & \partial g^*
\end{bmatrix}
+
\begin{bmatrix}
		0 & K^\top \\
		-K & 0
\end{bmatrix} . 
\]
Hence according Proposition \ref{prop:ps-subdifferential} and Corollary \ref{coro:smooth-perturbation}, we have the following result regarding the partial smoothness of $A$.

\begin{proposition}
For the optimization problem \eqref{eq:saddle}, given a saddle point $(\xbar, \ybar)$ suppose the following holds
\begin{itemize}
\item $f$ is partly smooth at $\xbar$ relative to $\calM_{\xbar}$;
\item $g^*$ is partly smooth at $\ybar$ relative $\calM_{\ybar}$;
\end{itemize} 
Then the set-valued operator $A$ in \eqref{eq:optimal} is partly smooth. 
\end{proposition}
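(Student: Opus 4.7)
The plan is to exploit the additive decomposition of $A$ displayed just before the proposition, and to combine it with the calculus rules already established in Section \ref{sec:partly-smooth}. Write
\[
A = A_{0} + L, \quad A_{0} \eqdef \begin{bmatrix} \partial f & 0 \\ 0 & \partial g^{*} \end{bmatrix}, \quad L \eqdef \begin{bmatrix} 0 & K^{\top} \\ -K & 0 \end{bmatrix},
\]
viewed as set-valued operators on $\Rn \times \Rm$. The off-diagonal piece $L$ is a bounded linear (in particular, single-valued and $C^{p}$-smooth) operator, while $A_{0}$ is built as a product of two subdifferentials of partly smooth convex functions.

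First I would invoke Proposition \ref{prop:ps-subdifferential} to transfer partial smoothness from $f$ (resp.\ $g^{*}$) to its subdifferential $\partial f$ (resp.\ $\partial g^{*}$): so $\partial f$ is partly smooth at $\xbar$ relative to $\calM_{\xbar}$ and $\partial g^{*}$ is partly smooth at $\ybar$ relative to $\calM_{\ybar}$. Next I would apply the separability result, Theorem \ref{thm:separability}, with $\calH_{1} = \Rn$, $\calH_{2} = \Rm$, $A_{1} = \partial f$, $A_{2} = \partial g^{*}$, to conclude that the product operator $A_{0}$ is partly smooth at $(\xbar,\ybar)$ relative to the product manifold $\calM \eqdef \calM_{\xbar} \times \calM_{\ybar}$, which is itself $C^{p}$-smooth around $(\xbar,\ybar)$.

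Finally I would apply the smooth perturbation rule, Corollary \ref{coro:smooth-perturbation}, to $A_{0}$ and $L$: since $L$ is $C^{p}$-smooth on the whole space (hence certainly on an open neighbourhood of $(\xbar,\ybar)$), the sum $A = A_{0} + L$ remains partly smooth at $(\xbar,\ybar)$ relative to the same manifold $\calM_{\xbar} \times \calM_{\ybar}$. This yields the claim.

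There is essentially no hard step here; the whole argument is a bookkeeping exercise in the calculus rules already proved. The only thing worth being explicit about is that the normal/sharpness compatibility required by the separability theorem is automatic for a product manifold (the normal cone splits as $\sN_{\calM_{\xbar}}(\xbar) \times \sN_{\calM_{\ybar}}(\ybar)$, matching $\Lin(\partial f(\xbar)) \times \Lin(\partial g^{*}(\ybar)) = \Lin(A_{0}(\xbar,\ybar))$), and that $L$ being globally linear trivially verifies the hypothesis of Corollary \ref{coro:smooth-perturbation}.
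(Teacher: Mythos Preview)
Your proposal is correct and follows essentially the same route as the paper, which simply cites Proposition~\ref{prop:ps-subdifferential} and Corollary~\ref{coro:smooth-perturbation} applied to the decomposition $A = A_0 + L$. Your version is slightly more explicit in invoking Theorem~\ref{thm:separability} to pass from the individual partial smoothness of $\partial f$ and $\partial g^{*}$ to that of the block-diagonal operator $A_0$ on the product manifold, a step the paper leaves implicit.
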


\subsubsection{Variational inequalities}

Examples of partly smooth operators also arise from variational inequalities. 
First, consider the following optimization problem with linear constraint
\beq\label{eq:f_plus_g}
\begin{aligned}
\min_{x\in\Rn,~ y\in\Rm}~~& f(x) + g(y) \\
{\rm subject~to}~~& Cx + Dy = e ,
\end{aligned}
\eeq
where the following assumptions are imposed
\begin{itemize}
	\item $f: \Rn\to\bar{\RR}$ and $g: \Rm\to\bar{\RR}$ are proper l.s.c. and convex functions.
	\item $C:\Rn\to \RR^\ell,~ D: \Rm\to \RR^\ell $ are bounded linear operators.
\end{itemize}
We also assume that the problem is well-posed such that solution exists. Let $\lambda$ be the dual vector, then the Lagrangian associated to \eqref{eq:f_plus_g} reads
\[
\calL(x,y, \lambda) = f(x) + g(y) - \iprod{\lambda}{Cx+Dy-e}  .
\]
Let $(\xbar,\ybar)$ and $\bar{\lambda}$ be primal and dual optimal, then there holds
\beq\label{eq:opt_x_y_lam}
\begin{aligned}
\iprod{x-\xbar}{ \tilde{\nabla } f(\xbar) - C^\top \bar{\lambda} } &\geq 0,~~~ \forall x\in\Rn \qandq \tilde{\nabla } f(\xbar) \in \partial f(\xbar) , \\
\iprod{y-\ybar}{ \tilde{\nabla } g(\ybar) - D^\top \bar{\lambda} } &\geq 0,~~~ \forall y\in\Rm \qandq \tilde{\nabla } g(\ybar) \in \partial g(\xbar) , \\
\iprod{\lambda-\bar{\lambda}}{ C\xbar+D\ybar-e } &\geq 0,~~~ \forall \lambda\in\RR^\ell . 
\end{aligned}
\eeq
Denote $\cZ=\Rn\times \Rm\times \Rm$, 
\beqn%\label{eq:vi-para}
z = \begin{bmatrix}
    x\\y\\ \lambda
\end{bmatrix}
\qandq
A(z) =
\begin{bmatrix}
    \partial f(x) -C^\top\lambda\\
    \partial g(y) -D^\top\lambda \\ 
    Cx+Dy-e 
\end{bmatrix} .
\eeqn
Note that $A$ is maximally monotone. 
Then solving the optimization problem is equivalent to solving the follow variational inequality
\beq\label{eq:VI}
\iprod{z-\zbar}{\wbar}\geq 0,\ ~\textrm{ for }~ \forall z\in \Omega \qandq \wbar \in A(\zbar) . 
\eeq
Similar to the case of Primal-Dual splitting method, under partial smoothness assumptions of $f$ and $g$, the set-valued operator $A$ is partly smooth.

\subsection{Support identification of online regression}\label{sec:regression}
Consider the following online regression problem:
\beq\label{eq:online}\tag{$P_{\bbE}$}
\min_{x\in\Rn} \mu \norm{x}_1 + \bbE_{(a,b)\sim \calD} \sfrac{1}{2}(a^\top x - b)^2,
\eeq
where $\calD$ is a Gaussian distribution with: let $\sigma_1,\sigma_2 > 0$
\[
a \sim \calN(0, \sigma_1^2)
\qandq
b = a^\top \tilde{x} + \epsilon,~~ \epsilon \sim \calN(0, \sigma_2^2)
\]
where $\tilde{x}$ is a $\kappa$-sparse vector ($\kappa \ll n$).  

As problem \eqref{eq:online} involves expectation, which makes the problem not easy to solve. A popular approach is to approximate the problem through sampling, which results to the following finite sum problem
\beq\label{eq:lasso}\tag{$P_{m}$}
\min_{x \in \bbR^n} \mu \norm{x}_1 + \frac{1}{m} \sum_{i=1}^{m} \sfrac{1}{2}(a_i^\top x - b_i)^2,
\eeq
where $(a_i,b_i)$ are samples from $\calD$. The expectation is approximated by the mean of the samples, and the resulting problem is the classic LASSO problem \cite{tibshirani1996regression}. This method is also referred to as the ``sample average approximation'' (SAA) method in statistics, see \cite{shapiro2021lectures,kleywegt2002sample} for more details.

The properties of problem \eqref{eq:lasso} have been widely studied in the literature, including algorithm, convergence rate, stability and dimension reduction, etc. For instance, ISTA \cite{daubechies2004iterative} and FISTA \cite{beck2009fast} are two popular algorithms to solve the problem, and both of them achieve local linear convergence \cite{liang2017activity}, the stability property of the solution is studied in \cite{bickel2009simultaneous}. 
In comparison, the choices for the online problem \eqref{eq:online} are rather limited. For instance typical algorithm to solve the problem would be proximal stochastic gradient descent (Prox-SGD) \cite{robbins1951stochastic}. 
A mini-batch version of the Prox-SGD is described below: let $s$ be the batch size
\beq\label{eq:psgd}
\left\lfloor
\begin{aligned}
&\textrm{Sample the index subset $\calI_k \subset \Ba{1,2,...,m}$ with size $s$} ,  \\[1mm]
&~ \xkp = \prox_{\mu\alpha_k \norm{\cdot}_1} \bPa{ \xk - \qfrac{\alpha_k}{s} \msum_{i\in\calI_k} \Pa{a_i^\top\xk - b_i} a_i } , 
\end{aligned}
\right.
\eeq
where $\alpha_k$ is step-size. 
It is well-known that stochastic gradient has non-vanishing error (or bounded variance), %see below for a simple derivation, 
this is the reason why Prox-SGD does not have identification property as pointed out in \cite{poon2018proxsagasvrg}. However, when mini-batch is considered, with Theorem \ref{thm:ps-ident} the new condition for identification, we can show that identification occurs when batch size is large enough.

We denote $\xsol, \xsol_m$ the optimal solutions of \eqref{eq:online} and \eqref{eq:lasso}, respectively. We also assume that they are unique. According to \cite[Theorem 5.7]{shapiro2021lectures}, under proper assumptions one has 
\[
\bbE[\norm{\xsol_m-\xsol}] = O(1/\sqrt{m})
\]
As a result, in the following we discuss two problems
\begin{itemize}
    \item For Prox-SGD, under what choices of batch size $s$, $\xk$ can identify the support of $\xsol_m$.
    \item Between \eqref{eq:online} and \eqref{eq:lasso}, under how many samples, $\xsol_m$ can identify the support of $\xsol$. 
\end{itemize}
Given that both $\xsol, \xsol_m$ are non-degenerate with respect to their probelms.

\subsubsection{Support identification of $\xk$ with respect to $\xsol_m$}\label{sec:prox-sgd}

In this part, we assume that the sequence $\xk$ is convergent to $\xsol_m$.

%%%%%%%%%%%%%%%%%%%%%%%%%%%%%%%%%%%%%%%%%%%%%%%%%%%%%%%%%%
\paragraph{Local union} Since we are in the convex setting, $\partial \norm{\cdot}_1$ is maximal monotone. For some small enough $\varepsilon > 0$, the local union
\[
\calU_m \eqdef {\mathsmaller \bigcup}_{x\in \calM\cap \ball{\varepsilon}{\xsol_m}} \Pa{x+ \partial \norm{x}_1 }  ,
\]
has full dimension. 
The dual vector is denoted as
\[
\usol_m \eqdef - \qfrac{1}{m\mu} \sum\nolimits_{i=1}^{m} \Pa{a_i^\top\xsol_m - b_i} a_i \in \ri\Pa{ \partial \norm{\xsol_m}_1 }  .
\]
which is assumed to be non-degenerate. 
As a consequence, the distance
\[
d = \dist\Pa{ \xsol_m+\usol_m, \bdy(\calU_m) } 
\]
is strictly positive.

%%%%%%%%%%%%%%%%%%%%%%%%%%%%%%%%%%%%%%%%%%%%%%%%%%%%%%%%%%
% \vskip2mm
\paragraph{Dual vector} 
From the definition of proximal operator and the iteration \eqref{eq:psgd}, we have
\[
\ukp \eqdef \qfrac{\xk-\xkp}{\mu\alpha_k} - \qfrac{1}{s\mu} \sum\nolimits_{i\in\calI_k} \Pa{a_i^\top\xk - b_i} a_i \in \partial \norm{\xkp}_1 . 
\]
Denote $\calI_k^\bot = \Ba{1,2,...,m} \setminus \calI_k$, then we have
\beq\label{eq:error_sgd}
\begin{aligned}
&\norm{(\xkp+\ukp) - (\xsol_m+\usol_m)} \\
&\leq  \norm{\xkp - \xsol_m} + \norm{\ukp - \usol_m} \\
&\leq  \norm{\xkp - \xsol_m} + \norm{\sfrac{\xk-\xkp}{\mu\alpha_k} - \sfrac{1}{s\mu} \msum_{i\in\calI_k} \pa{a_i^\top\xk - b_i} a_i + \sfrac{1}{m\mu} \msum_{i=1}^{m} \pa{a_i^\top\xsol_m - b_i} a_i } \\
&\leq  \norm{\xkp - \xsol_m} + \sfrac{1}{\mu\alpha_k}\norm{\xk-\xkp} + \sfrac{1}{\mu} \norm{\sfrac{1}{s} \msum_{i\in\calI_k} a_i^\top \pa{\xk - \xsol_m} a_i + \sfrac{1}{s} \msum_{i\in\calI_k} \pa{a_i^\top\xsol_m-b_i} a_i  \\
&\quad - \sfrac{1}{m} \msum_{i\in\calI_k} \pa{a_i^\top\xsol_m - b_i} a_i - \sfrac{1}{m} \msum_{i\in\calI_k^\bot} \pa{a_i^\top\xsol_m - b_i} a_i} \\
&\leq  \norm{\xkp - \xsol_m} + \sfrac{1}{\mu s} \norm{ \msum_{i\in\calI_k}  a_i^\top\pa{\xk - \xsol_m} a_i } + \sfrac{1}{\mu\alpha_k}\norm{\xk-\xkp} \\
&\quad + \sfrac{m-s}{\mu ms} \norm{\msum_{i\in\calI_k}\pa{a_i^\top\xsol_m-b_i} a_i} + \sfrac{1}{\mu m} \norm{ \msum_{i\in\calI_k^\bot}  \pa{a_i^\top\xsol_m - b_i} a_i }  \\
&\leq \Pa{ 1+ \sfrac{\alpha^2}{\mu} } C_k + \sfrac{1}{\mu\alpha_k}\norm{\xk-\xkp} + \sfrac{m-s}{\mu ms} \norm{\msum_{i\in\calI_k}\pa{a_i^\top\xsol_m-b_i} a_i} + \sfrac{1}{\mu m} \norm{ \msum_{i\in\calI_k^\bot}  \pa{a_i^\top\xsol_m - b_i} a_i }.
\end{aligned}
\eeq
where $\alpha = \max_i \norm{a_i}$, and $C_k = \max\Ba{ \norm{\xkp-\xsol_m}, \norm{\xk-\xsol_m} } \to 0$. 
\begin{itemize}
    \item When $\calI_k \equiv \Ba{1,2,...,m}$, \ie the full-batch proximal gradient descent, we can afford constant non-vanishing step-size $\alpha_k \geq \alpha > 0$ and consequently $\sfrac{1}{\alpha_k}\norm{\xk-\xkp} \to 0$. We assume that $\sfrac{1}{\alpha_k}\norm{\xk-\xkp} \to 0$ still holds when batch size $s$ is large enough. 

    \item For the last two terms of the line of \eqref{eq:error_sgd}, they both vanish when $s=m$, meaning that their values are small when $s$ is close to $m$.

    \item Due to the assumption of the convergence of $\xk$ to $\xsol_m$, then $k$ is large enough we also have $C$ is small enough.
\end{itemize}
Summarize from the above discussions, when $k$ and $s$ are large enough, the following condition
\beq\label{eq:leq_d}
\norm{(\xkp+\ukp) - (\xsol_m+\usol_m)} < d 
\eeq
could hold. As a result, support identification occurs.

\begin{figure}[!hb]
    \centering
    \subfloat[Identification v.s. batch size]{\includegraphics[width=0.45\textwidth]{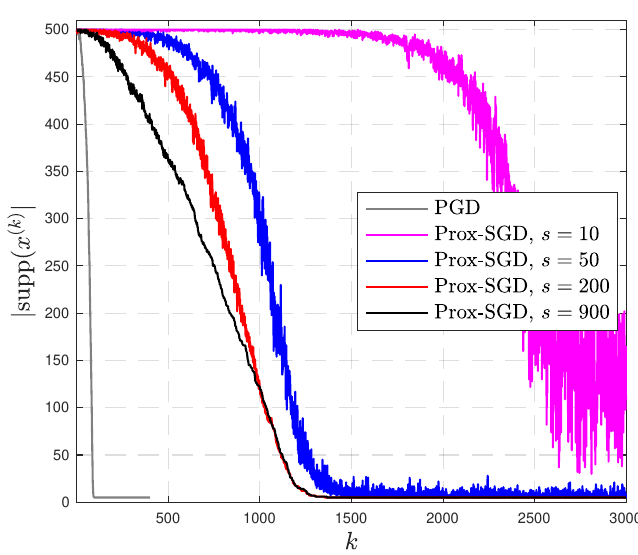}\label{fig:SL_objectives_200}}
    \hskip7mm
    \subfloat[The green dashed line is the estimation of $\dist(\xsol_m+\vsol_m,~\cU_m)$]{\includegraphics[width=0.45\textwidth]{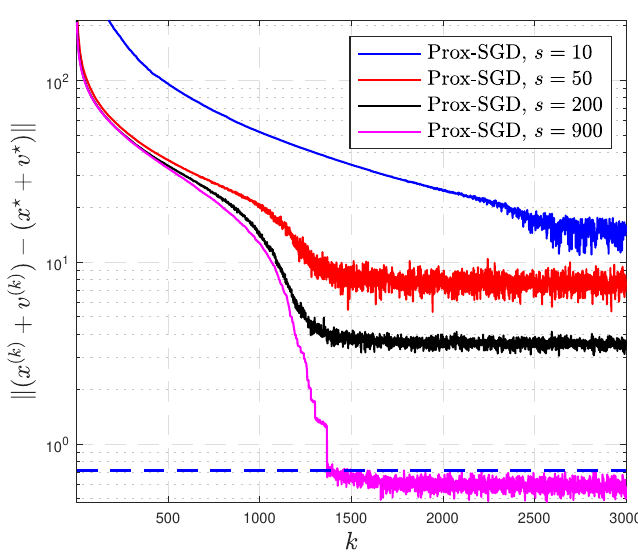}\label{fig:SL_objectives_20}}
    \\
    \caption{Identification of proximal mini-batch stochastic gradient, and dual vector error.}    
    \label{fig:minibatch_SGD_and_dual}
\end{figure}

To illustrate our result, we conduct experiment with a problem size of $(m, n) = (1000, 500)$. The identification results for different batch sizes are presented in Figure \ref{fig:minibatch_SGD_and_dual}~(a), proximal gradient descent is added for reference. We can observe from the figure that
\begin{itemize}
\item When the batch size is small, \eg $s = 10$, no identification happens. Increase the batch size can significantly reduce the support size of $\xk$.

\item For batch size equals $150, 900$, we have identification. 
\end{itemize}
We also provide the plot for error $\norm{(\xk+\uk) - (\xsol_m+\usol_m)}$, as showed in Figure \ref{fig:minibatch_SGD_and_dual}~(b):
\begin{itemize}
\item The {\it blue dashed} line is the estimation of $d=\dist{(\xsol_m+\usol_m,~\cU)}$.
\item Note that even for $s=900$ the magenta line, eventually the error is not always below $d$. Then according to our theory, identification should not occur. No to mention $s=150$, which contradicts with identification in Figure \ref{fig:minibatch_SGD_and_dual}~(a). 
\end{itemize}
We remark that this is not a contradiction, and is due to the reason that our analysis is not tight as pointed our in Remark \ref{rmk:dual-norm}. More precisely, this is because we are not fully exploiting the local geometry of $\ell_1$-norm, as we should use $\ell_\infty$-norm to characterize the error, \ie $\norm{(\xk+\uk) - (\xsol_m+\usol_m)}_{\infty}$, as $\ell_\infty$-norm is the dual norm of $\ell_1$-norm. While in our analysis above, we use $\ell_2$-norm, which makes the estimation of the error rather weak. 
In Figure \ref{fig:minibatch_SGD_infty} below, we provide the error in $\ell_\infty$-norm, which now matches with Figure \ref{fig:minibatch_SGD_and_dual}~(a).

\begin{figure}[!htb]
    \centering
    \includegraphics[width=8cm]{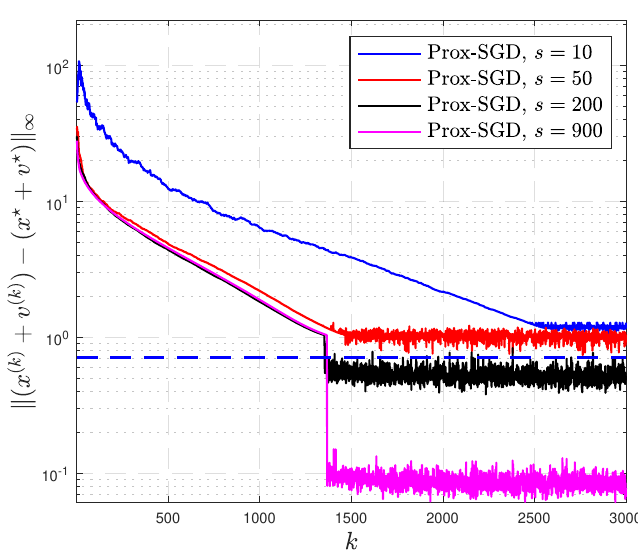}
    \caption{Error via $\ell_\infty$-norm.}
    \label{fig:minibatch_SGD_infty}
\end{figure}

\subsubsection{Support identification of $\xsol_m$ with respect to $\xsol$}
We now show that $\xsol_m$ identifies the support of $\xsol$ with high probability when $m$ is sufficiently large. 
\[
\calU \eqdef {\mathsmaller \bigcup}_{x\in \calM\cap \ball{\varepsilon}{\xsol}} \Pa{x+ \partial \norm{x}_1 }  ,
\]
has full dimension. 
The dual vector is denoted as
\[
\usol \eqdef - \bbE_{(a,b)\sim\calD} \Pa{a^\top\xsol - b} a \in \ri\Pa{ \partial \norm{\xsol}_1 }  .
\]
which is assumed to be non-degenerate. The distance is then
\[
d^\star = \dist\Pa{ \xsol+\usol, \bdy(\calU) } 
\]
is strictly positive. 
If we can have  
\[
\bbE[\dist{(\xsol_m+\usol_m, \xsol+\usol)}] < \dsol
\]
then $\xsol_m$ identifies the support of $\xsol$. Note that we have
\[
\begin{aligned}
\norm{\usol_m-\usol}
&= \norm{\sfrac{1}{\mu m}\msum_{i=1}^m(a_i^\top \xsol_m-b_i)a_i - \sfrac{1}{\mu}\bbE_{(a,b)\sim\calD} (a^\top \xsol-b) a} \\
&= \sfrac{1}{\mu}\norm{\sfrac{1}{ m}\msum_{i=1}^m(a_i^\top \xsol_m-b_i)a_i -\bbE_{(a,b)\sim\calD}(a^\top \xsol-b)a} \\
&\leq \sfrac{1}{\mu} 
 \norm{\underbrace{ \sfrac{1}{m} \msum_{i=1}^m a_i a_i^\top(\xsol_m - \xsol) }_{A}} + \sfrac{1}{\mu} \norm{\underbrace{ \sfrac{1}{m} \msum_{i=1}^m (a_i^\top \xsol - b_i) a_i - \bbE[(a^\top \xsol - b) a] }_{B}}
\end{aligned}
\]
\begin{itemize}
\item 
For term $A$, we have
\[
\bbE \left[\norm{\sfrac{1}{m} \msum_{i=1}^m a_i a_i^\top (\xsol_m - \xsol)} \right]\leq \bbE \left[ \norm{ \sfrac{1}{m} \msum_{i=1}^m a_i a_i^\top} \cdot \norm{\xsol_m - \xsol} \right]
\]
where $\bbE \left[\norm{\sfrac{1}{m} \msum_{i=1}^m a_i a_i^\top }\right]$ is bounded.
\item
In terms of $B$, we have
\[
\bbE \left[ \sfrac{1}{m} \msum_{i=1}^m (a_i^\top \xsol - b_i) a_i - \bbE[(a^\top \xsol - b) a] \right] = O\left( \frac{1}{\sqrt{m}} \right ) 
\]
by the central limit theorem.
\end{itemize}
To sum up, the total expected distance becomes:
\[
\begin{aligned}
\mathbb{E} \left[\norm{(\xsol_m+\usol_m)-(\xsol+\usol)} \right] &\leq \mathbb{E} \left[ \| \xsol_m - \xsol \| + \sfrac{1}{\mu} ( \norm{A }  + \norm{B} ) \right ] \\
&\leq O\left( {1}/{\sqrt{m}} \right ) + \sfrac{1}{\mu} \left( O\left( {1}/{\sqrt{m}} \right ) + O\left( {1}/{\sqrt{m}} \right ) \right ) \\
&= O\left( {1}/{\sqrt{m}} \right )
\end{aligned}
\]
When $m$ is sufficiently large, this distance becomes smaller than the safety distance $\dsol$, ensuring that $\xsol_m$ and $\xsol$ share the same support under the non-degeneracy condition.

\subsection{Identification properties and iteration bounds}

In this part, we turn to the identification property of partial smoothness and discuss its behavior under much weaker conditions compared to the existing result. 
We first discuss the identification result under the setting that the dual vector is not convergent using proximal mini-batch stochastic gradient, then we discuss the scenario where the non-degeneracy condition fails. We conclude this part by check the upper bound on the number of iterations need for identification. 

\subsubsection{Identification under degeneracy}\label{exp:iden-degeneracy}

In this part, we present a toy example to explain the identification property under degeneracy condition.
Consider the following simple 
\[
\min_x~ \lambda f(x) + \qfrac{1}{2}\norm{x-b}^2
\]
where $f(x)$ takes $\ell_1$-norm and nuclear norm. 
Note that solution of the problem is rather straightforward to obtain as it is computing the proximal operator of $f$. However, we can solve it with Forward--Backward splitting method \cite{lions1979splitting} with small step-size to observe the degenerate behavior. 

Below we discuss case by case by fixing $\lambda=1$, since the examples are only for illustrative purpose, the problems are rather small. 
The MATLAB variable-precision arithmetic {\tt vpa} is used such that we can use high precision computation, and we use $\norm{\xk-\xbar} \leq 10^{-20}$ as stopping criterion. 

\paragraph{$\ell_1$-norm $f(x) = \norm{x}_1$}

Let $b = \big[ 3 , 1 , 0.5 \big]^\top$, then the solution is $\xbar = \big[ 2 , 0 , 0 \big]^\top$. Moreover, we have the corresponding dual vector $\ubar$ reads
\[
\ubar = b - \xbar = \big[ 1 , 1 , 0 \big]^\top \in \bdy \Pa{ \partial\norm{\cdot}_1(\xbar) } .
\]
which violates the non-degeneracy condition at the second element as $\xbar_2=0$ while $\ubar_2=1$. 
Therefore, the minimal manifold and the enlarged manifold are
\[
\cM= \big[ \bbR,0,0 \big]^\top 
\qandq
\widehat\cM=\big[ \bbR,\bbR,0 \big]^\top .
\] 
respectively. 

Two choices of starting point $x^{(0)}$ are considered
\[
\begin{aligned}
&\textrm{Choice 1: $x^{(0)} = \big[ 2, 2 , 2 \big]^\top$} 
\qqandqq
\textrm{Choice 2: $x^{(0)} = \big[ 0, -2 , -2 \big]^\top$} .
\end{aligned}
\]
For both choices, the support size of $\seq{\xk}$ are provided in Figure~\ref{fig:degenerate}~(a)
\begin{itemize}
\item For Choice 1 starting point, the sequence $\seq{\xk}$ identifies the enlarged manifold. 

\item For Choice 2 starting point, the sequence $\seq{\xk}$ identifies the correct minimal manifold of $\xbar$. 
\end{itemize}
The above difference indicates that the when the problem is degenerate, the manifold identified depends on the direction of $x^{(0)}$ relative to the solution $\xbar$.

\begin{figure}[htbp]
\centering  
\subfloat[Support size of $\xk$]{   
\begin{minipage}{0.49\linewidth}\label{fig:support}
\centering    
\includegraphics[width=0.8\linewidth]{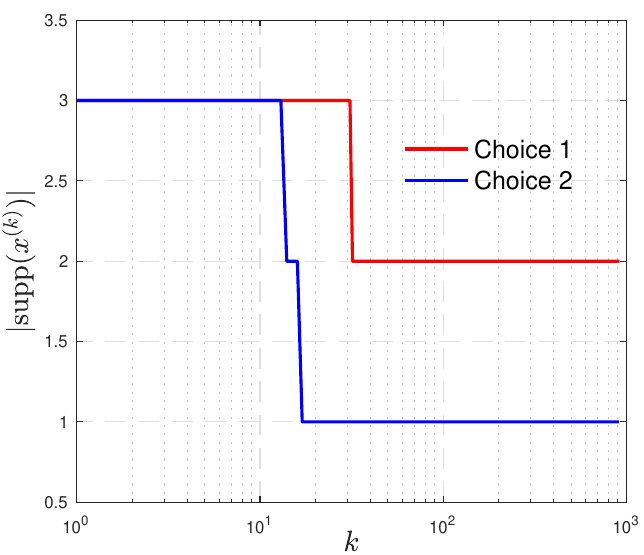}  
\end{minipage}
}
\subfloat[Rank size of $\xk$]{ 
\begin{minipage}{0.49\linewidth}\label{fig:track}
\centering    
\includegraphics[width=0.8\linewidth]{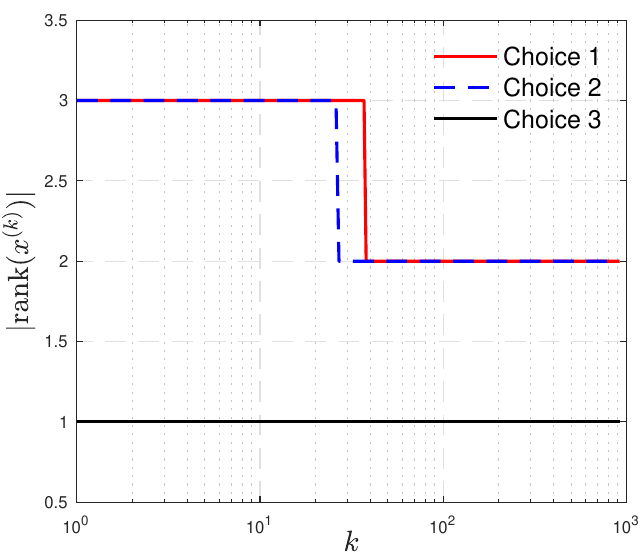}
\end{minipage}
}
\caption{Manifold identification of $\ell_1$-norm and nuclear norm under degenerate condition.}
\label{fig:degenerate}   
\end{figure}

\paragraph{Nuclear norm $f(x) = \norm{x}_*$}

Let $b\in\RR^{3\times 3}$ with singular value decomposition (SVD) as $[u,s,v]={\tt svd}(b)$ with $s = \diag\Pa{[ 3 , 1 , 0.5 ]}$. With $\lambda=1$, the singular value of $\xbar$ is $\big[ 2 , 0 , 0 \big]$. Correspondingly, the dual vector $\ubar$ reads
\[
\ubar = b - \xbar = u \ \diag\Pa{ [ 1 , 1 , 0 \big] } \ v^\top \in \bdy \Pa{ \partial\norm{\cdot}_*(\xbar) } .
\]
which violates the non-degeneracy condition at the second singular value. 
Therefore, the minimal manifold and the enlarged manifold are
\[
\cM= \Ba{ x \in\RR^{3\times 3} \mid \rank(x) = 1 }
\qandq
\widehat\cM= \Ba{ x \in\RR^{3\times 3} \mid \rank(x) = 2 }  .
\] 
respectively. 

Two choices of starting point $x^{(0)}$ are considered
\[
\begin{aligned}
\textrm{Choice 1: $x^{(0)} = u \ \diag\Pa{ [ 3 , 3 , 3 \big] } \ v^\top$} , \quad
\textrm{Choice 2: $x^{(0)} = {\tt randn}(3,3)$} 
~\qandq~
\textrm{Choice 3: $x^{(0)} = 0$}  .
\end{aligned}
\]
For all choices, the rank of $\seq{\xk}$ are provided in Figure~\ref{fig:degenerate}~(b), in this experiment, we treat values smaller than $10^{-30}$ as zero
\begin{itemize}
\item For Choice 1 \& 2 starting points, the sequence $\seq{\xk}$ identifies the enlarged manifold. 

\item For Choice 3 starting point, the sequence $\seq{\xk}$ identifies the correct minimal manifold of $\xbar$. 
\end{itemize}
Compared to $\ell_1$-norm case, since the singular values are non-negative and the direction is encoded in the eigenvectors, we found that $\xk$ always identifies the enlarged manifold as long as $x^{(0)} \neq 0$.

\subsubsection{Upper bound of number of steps for identification}
To conclude this section, we illustrate the estimation of the identification step with the following optimization problem
\beq\label{eq:penalty}
\min_x \qfrac{1}{2}\norm{Ax-b}_2^2 + \lambda \norm{x}_1 + \qfrac{\alpha}{2} \norm{x}_2^2,
\eeq
where $A\in\bbR^{m\times n}$, $\lambda,\ \alpha>0$. The term $\frac{\alpha}{2} \norm{x}_2^2$ is introduced to ensure strong convexity of the objective function such that Forward-Backward splitting method enjoys global linear convergence for both $\seq{\xk}$ and $\seq{\uk}$. 

Based on Proposition \ref{prop:step} and Example \ref{eg:FB-step}, we can derive an upper bound for the identification step. To validate this, we consider a problem with dimensions $(m,n)=(20,32)$ and run the  method with stopping criterion $\norm{\xk-\xkm}<10^{-15}$. Figure \ref{fig:support-FBS} illustrates the evolution of the support of $\xk$ over iterations. 
The location of the {\it red dashed} line is our estimation of the number of steps for identification. Note that our upper bound is quite loose which is the consequence of the inequalities involved in the derivation. 
Nonetheless, the experiments validates our result in Proposition \ref{prop:step}.

\begin{figure}[H]
    \centering
    \includegraphics[width=0.45\linewidth]{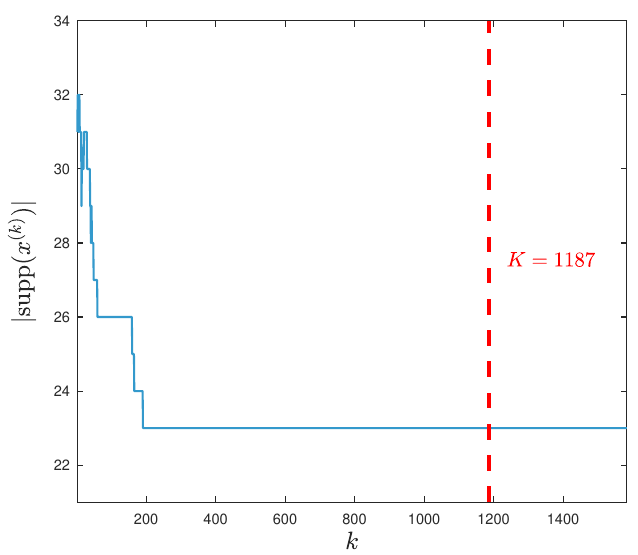}
    \caption{Identification of $\xk$ and the estimated identification step. The red dashed line represents the upper bound for the number of steps required for identification.}
    \label{fig:support-FBS}
\end{figure}

\section{Conclusion}\label{sec:conclusion}

In this paper, we proposed the definition of partly smooth operators as a complement to \cite{lewis2022partial}, offering a new perspective on partial smoothness with a geometric explanation. Under the framework of partial smoothness, we demonstrated the identification property under more relaxed conditions, specifically for non-convergent dual sequences and no degenerate assumptions. 
We conducted numerical experiments to illustrate our theory, showing that identification can occur even when the dual vector is not convergent. For instance, in the mini-batch stochastic gradient descent (SGD) example, identification still takes place when the batch size is sufficiently large, indicating that the bounded distance of dual vector is enough to promise identification.
Furthermore, under the local union structure, we revealed additional noteworthy results, including an estimation of the upper bound for the identification step. While this has been mentioned in the literature such as \cite{liang2017activity,liang2014convergence}.
As long as the operator exhibits local smoothness and the algorithm satisfies the appropriate properties, we can derive a suitable estimation.

\begin{acknowledgements}
JL is supported by the National Natural Science Foundation of China (No. 12201405), the ``Fundamental Research Funds for the Central Universities'', the National Science Foundation of China (BC4190065) and the Shanghai Municipal Science and Technology Major Project (2021SHZDZX0102).
\end{acknowledgements}

\section*{Conflict of interest}

The authors declare that they have no conflict of interest.

% BibTeX users please use one of
%\bibliographystyle{spbasic}      % basic style, author-year citations
\bibliographystyle{spmpsci}      % mathematics and physical sciences
%\bibliographystyle{spphys}       % APS-like style for physics
%\bibliography{}   % name your BibTeX data base
\bibliography{psso}

% Non-BibTeX users please use
% \begin{thebibliography}{}
% %
% % and use \bibitem to create references. Consult the Instructions
% % for authors for reference list style.
% %
% \bibitem{RefJ}
% % Format for Journal Reference
% Author, Article title, Journal, Volume, page numbers (year)
% % Format for books
% \bibitem{RefB}
% Author, Book title, page numbers. Publisher, place (year)
% % etc
% \end{thebibliography}

\end{document}